\documentclass[reqno,tbtags,sumlimits,intlimits,a4paper,oneside,11pt]{amsart}
\def\x{3.73}
\usepackage[lmargin=\x cm,rmargin=\x cm,tmargin=\x cm,bmargin=\x cm,bindingoffset=0cm]{geometry}

\newif\ifExceptional
\Exceptionaltrue

\usepackage{enumitem}
\setlist[1]{itemsep=5pt} 
\usepackage[table]{xcolor}
\usepackage{amsthm}
\usepackage{amsmath}
\usepackage{rotating}
\usepackage{adjustbox}
\usepackage{amssymb}
\usepackage{amsfonts}
\usepackage{mathrsfs}
\usepackage{tikz}
\usetikzlibrary{decorations.markings}
\usetikzlibrary{cd}
\usepackage{url}
\usepackage[backend=biber, 
    sorting=nyt, 
    date=year, 
    hyperref=false,
    arxiv=abs,  
    url=false, 
    doi=false,
    eprint=false, 
    giveninits,
    ]{biblatex}
\usepackage{mathscinet}
\usepackage{pdflscape}
\usepackage{afterpage}
\usepackage{capt-of}
\addbibresource{bibliography.bib}
\appto{\bibsetup}{\sloppy}

\newcommand{\mc}[1]{\mathcal{#1}}
\newcommand{\mb}[1]{\mathbb{#1}}
\newcommand{\mf}[1]{\mathfrak{#1}}
\newcommand{\ba}{\bar{a}}
\newcommand{\backspace}{\!\!\!}

\newcommand{\rd}{\textrm{d}}

\newcommand{\modaut}[2][(X,Y)]{\Phi_{#2}#1}

\newcommand{\poles}{S}

\def\beq#1#2\eeq{%
        \begin{equation}%
        \label{#1}%
            #2%
        \end{equation}%
    }
\newcommand{\SLNZ}[1][2]{\mathrm{SL}({#1},\mb{Z})}
\newcommand{\SLNC}[1][N+1]{\mathrm{SL}({#1},\mb{C})}
\newcommand{\slnc}[1][N+1]{\mf{sl}({#1},\mb{C})}

\newcommand{\rg}{\Gamma}
\newcommand{\brg}{B\Gamma}
\newcommand{\abg}{\brg/[\brg,\brg]}
\newcommand{\ag}{\rg/[\rg,\rg]}
\newcommand{\hombgc}{\Hom(\brg,\mb{C}^\ast)}

\newcommand{\roots}{\Phi}


\newcommand{\triv}{\epsilon}



\newcommand{\Id}{\mathrm{Id}}
\newcommand{\id}{\mathrm{id}}

\newcommand{\GL}{\mathrm{GL}}
\newcommand{\SL}{\mathrm{SL}}
\newcommand{\SO}{\mathrm{SO}}
\newcommand{\SU}{\mathrm{SU}}

\newcommand{\PSL}{\mathrm{PSL}}

\newcommand{\Hom}{\mathrm{Hom}}

\newcommand{\Aut}[1]{\mathrm{Aut}\!\left(#1 \right)}

\newcommand{\Der}[1]{\mathrm{Der}\!\left(#1 \right)}
\newcommand{\Ad}{\mathrm{Ad}}
\newcommand{\ad}{\mathrm{ad}}

\newcommand{\lcm}{\mathrm{\,lcm}}

\newcommand{\diag}{\mathrm{diag}}
\newcommand{\rank}{\mathrm{rank}\,}
\newcommand{\res}{\mathrm{res}\,}

\newcommand{\finitegroupfont}{\mathsf}
\newcommand{\zn}[1]{\mb{Z}/#1\mb{Z}}
\newcommand{\cg}[1]{\finitegroupfont{C}_{#1}}
\newcommand{\dg}[1]{\finitegroupfont{D}_{#1}}
\newcommand{\tg}{\finitegroupfont{T}}
\newcommand{\og}{\finitegroupfont{O}}
\newcommand{\yg}{\finitegroupfont{Y}}

\newcommand{\bd}[1]{\finitegroupfont{Dic}_{#1}}
\newcommand{\bt}{B\finitegroupfont{T}}
\newcommand{\bo}{B\finitegroupfont{O}}
\newcommand{\by}{B\finitegroupfont{Y}}

\newcommand{\ch}{{\mathrm{ch}}}
\renewcommand{\deg}{{\mathrm{deg}}}
\newcommand{\ex}{\mathsf{n}}

\newtheorem{Theorem}{Theorem}[section]
\newtheorem{Proposition}[Theorem]{Proposition}
\newtheorem{Lemma}[Theorem]{Lemma}
\newtheorem{Corollary}[Theorem]{Corollary}

\newtheorem{Definition}[Theorem]{Definition}
\newtheorem{Example}[Theorem]{Example}
\newtheorem{Notation}[Theorem]{Notation}
\newtheorem{Remark}[Theorem]{Remark}

\setcounter{MaxMatrixCols}{20}

\newcommand\colorchain{gray}
\newcommand\colorp{red}
\newcommand\colorm{blue}
\newcommand\colora{red}
\newcommand\colorb{olive}
\newcommand\colorc{blue}
\tikzstyle{root}=[scale=0.6, shape=circle]
\tikzstyle{bigroot}=[scale=0.8, shape=circle]
\tikzstyle{2ndroot}=[scale=0.4, shape=circle]
\tikzstyle{cross}=[scale=0.15, shape=circle, fill]
\tikzstyle{chain}=[color=\colorchain,->, shorten >=8pt, shorten <=8pt, >=stealth]
\tikzstyle{schain}=[color=\colorchain,-, shorten >=4pt, shorten <=4pt]
\tikzstyle{cochainp}=[thick, color=\colorp,->, shorten >=8pt,shorten <=8pt, >=stealth]
\tikzstyle{cochainm}=[thick, color=\colorm,->, shorten >=8pt,shorten <=8pt, >=stealth]
\tikzstyle{scochain}=[-, shorten >=3pt, shorten <=3pt]
\tikzstyle{scochainp}=[color=\colorp,-, shorten >=3pt, shorten <=3pt]
\tikzstyle{scochainm}=[color=\colorm, densely dashed,-, shorten >=3pt, shorten <=3pt]
\tikzstyle{scochaina}=[color=\colora, densely dashdotted, -, shorten >=3pt, shorten <=3pt]
\tikzstyle{scochainb}=[color=\colorb, -, shorten >=3pt, shorten <=3pt]
\tikzstyle{scochainc}=[color=\colorc, dashed, -, shorten >=3pt, shorten <=3pt]
\newcommand{\scaleAA}{0.32}
\newcommand{\scaleBB}{1.15}
\newcommand{\scaleGG}{1.10}
\newcommand{\dangle}{20}
\newcommand{\bend}{15}


\newcommand{\dynkintablescale}{0.9}
\newcommand{\dynkinfont}{\footnotesize}
\tikzstyle{dynkinnode}=[draw, color=black, shape=circle, minimum size=3.5 pt, inner sep=0]
\tikzstyle{ldynkinnode}=[draw, color=black, shape=circle, minimum size=3.5 pt, inner sep=0]
\tikzstyle{sdynkinnode}=[draw, color=black, shape=circle, minimum size=3.5 pt, inner sep=0]
\tikzstyle{brace}=[decorate, decoration={brace, amplitude=5pt}]
\tikzstyle{mbrace}=[decorate, decoration={brace, amplitude=5pt, mirror}]

\title[Chevalley normal forms for automorphic~Lie~algebras]{A uniform construction of Chevalley normal forms for automorphic~Lie~algebras on the Riemann~sphere}

\author{Vincent Knibbeler}
\address{Maxwell Institute for Mathematical Sciences, The Bayes Centre, Edinburgh EH8 9BT, UK}
\address{Department of Mathematics, Heriot-Watt University, Edinburgh EH14 4AS, UK}
\email{V.Knibbeler@gmail.com}

\begin{document}

\begin{abstract}
For a finite subgroup $\brg$ of $SU(2)$ and one of its ground forms $P\in\mb{C}[X,Y]$, we show that the space of invariants $\mb{C}[X,Y,P^{-1}]^{\brg}_k$ of degree $k\in2\mb{Z}$ is a cyclic module over the algebra of invariants of degree zero. We find a generator for this module, uniformly for all finite subgroups of  $SU(2)$. 
Then we construct a uniform intertwiner sending the scalar invariants to vector-valued invariants. 
With these tools we construct all automorphic Lie algebras $\mf{g}[X,Y,P^{-1}]^{\brg}_0$ defined by a homomorphism from the symmetry group $\brg$ into the automorphism group of a finite dimensional Lie algebra $\mf g$, which factors through $SU(2)$. When the Lie algebra $\mf g$ is simple, we present a set of generators for the automorphic Lie algebra which is analogous to the Chevalley basis for $\mf g$. 
Previous observations of isomorphisms between automorphic Lie algebras with distinct symmetry groups $\brg$ are explained in terms of the Coxeter number of $\mf g$ and the orders appearing in $\brg$. Finally, we compute the structure constants for automorphic Lie algebras of all exceptional Lie types.
\end{abstract}

\maketitle

\thispagestyle{empty}

\vspace{3mm}
\noindent\textit{Mathematics Subject Classification}:
17B65;  	
17B05;  	
13A50;  	
17B22;  	
20G41.  	

\vspace{3mm}
\noindent\textit{Keywords}:
infinite dimensional Lie algebras, equivariant map algebras, integrable systems, classical invariant theory

\section{Introduction}

Automorphic Lie algebras are originally defined as the Lie algebras of meromorphic maps from the Riemann sphere $\overline{\mb C}$ to a finite dimensional Lie algebra $\mf{g}$ over the complex numbers. The maps are required to be equivariant with respect to a finite subgroup $\rg$ of $\Aut{\overline{\mb C}}$ acting on $\mf g$ by Lie algebra automorphisms, and poles are confined to an orbit of this group. The introduction of automorphic Lie algebras occurred in the PhD thesis of Lombardo in 2004 \cite{lombardo2004reductions}, and the related work of Lombardo and Mikhailov \cite{lombardo2004reductions, lombardo2005reduction}. They are closely related to the reduction group appearing in earlier work of Mikhailov \cite{mikhailov1979integrability, mikhailov1980reduction, mikhailov1981the} and emerged in the area of integrable partial differential equations.

Special cases of automorphic Lie algebras appeared much earlier still. They include the twisted loop algebras, which extend to the affine Kac-Moody algebras \cite{kac1990infinite}, but also the Onsager algebra from 1944 \cite{onsager1944crystal}. The fact that the Onsager algebra fits in the framework of automorphic Lie algebras is only clear thanks to the concrete basis constructed by Roan \cite{roan1991onsager}.

After the first publications of Lombardo and Mikhailov on automorphic Lie algebras, a project to compute bases for these algebras in normal form, and the corresponding structure constants, was initiated by Lombardo and Sanders. The first milestone of this project, a classification for the case $\mf g=\slnc[2]$ ($2\times2$ matrices with trace zero), was published in 2010 \cite{lombardo2010on}. Here it was noticed that the automorphic Lie algebras had an element with eigenvalues $1$ and $-1$ which allowed the construction of a normal form. This normal form was generalised and called the Chevalley normal form in \cite{knibbeler2017higher}.

In 2010, a second important document in this field emerged: Bury finished his PhD thesis \cite{bury2010automorphic}, supervised by Mikhailov. There is a large overlap in computed automorphic Lie algebras in this thesis, and in \cite{lombardo2010on}. However, Bury did not determine a normal form for the automorphic Lie algebras. Instead, the normal form of the target Lie algebra $\mf g $ was averaged, and the resulting basis for the automorphic Lie algebra was used directly to study integrable partial differential equations.

An independent development by Neher, Savage and Senesi resulted in the introduction of equivariant map algebras in 2012 \cite{neher2012irreducible}. Automorphic Lie algebras are examples of equivariant map algebras, the latter being defined in much more general algebraic terms. Roughly, one replaces the Riemann sphere in the original definition of an automorphic Lie algebra by a scheme, to obtain the definition of an equivariant map algebra. 

We notice two interesting distinctions between the development of automorphic Lie algebras and equivariant map algebras, which have such a similar definition. Firstly, the motivation to introduce equivariant map algebras was to unify a large variety of Lie algebra families which proved to be of importance in the past, and there was no specific mention of integrable systems (the primary motivation of automorphic Lie algebras). Secondly, where initial research efforts on automorphic Lie algebras went into the computation of concrete bases, the first paper on equivariant map algebras works with the algebras on an abstract level and studies their finite dimensional representations from the start, obtaining the classification of irreducible representations, in much the same way as Roan did for the Onsager algebra in \cite{roan1991onsager}.

Lombardo and Sanders meanwhile continued the project to compute bases and structure constants of automorphic Lie algebras and published the classification of automorphic Lie algebras based on $\mf g=\slnc[N]$ together with the author in 2017 \cite{knibbeler2017higher}. This classification is restricted to representations $\rg\to\Aut{\slnc[N]}$ without trivial components. That is, the only constant invariant is the zero matrix: $\slnc[N]^\rg=\{0\}$ (which implies $N\le 6$). To this day, \cite{knibbeler2017higher} is the state of the art of the classification project for automorphic Lie algebras on the Riemann sphere. Published, that is. Sanders has computed cases based on $\mf{so}(N,\mb C)$ and $\mf{sp}(2N,\mb C)$, but we were unable to complete the classification as was done for $\slnc[N]$ in \cite{knibbeler2017higher}. The first automorphic Lie algebras of type $G_2$ were computed by Casper Oelen, but also this work has remained unpublished.
 
Automorphic Lie algebras on complex tori have a history going back to 1989 at least, when Reiman and Semenov-Tyan-Shanskii \cite{reiman1989lie} generalised the elliptic Lie algebra of Holod \cite{holod1987hidden}. In 1993, Uglov gave an algebraic description of the automorphic Lie algebras on complex tori with base Lie algebra $\mf g=\slnc[2]$ and symmetry group $\zn{2}\times\zn{2}$ \cite{uglov1994lie}, and Skrypnyk generalised this to base Lie algebra $\mf g=\slnc[N]$ and symmetry group $\zn{N}\times\zn{N}$ with an elegant construction \cite{skrypnyk2012quasi}. Skrypnyk also remarked that the commutation relations in the bases provided in \cite{reiman1989lie, uglov1994lie} are hardly manageable, and proposed better bases. The Lie algebras are still used in contemporary research on integrable systems \cite{skrypnyk2025reduction}. Up until the very recent paper of Lombardo and Oelen \cite{lombardo2024normal} nobody appeared to have seen that all automorphic Lie algebras on tori mentioned in this paragraph so far are isomorphic to a Lie algebra of the form $\mf{sl}(N,R)$ where $R$ is a ring of elliptic functions invariant under the symmetry group. This shows that the commutation relations are as manageable as those of the simple Lie algebra $\mf{sl}(N,\mb{C})$. The situation moves further away from the simple case when the action of the symmetry group on the Riemann surface involves nontrivial stabilizers, and the first classification of such automorphic Lie algebras on tori was achieved by Oelen with his PhD thesis \cite{oelen2022automorphic} in 2022, followed by the corresponding paper \cite{knibbeler2024classification}. 
Automorphic Lie algebras will never be of the form $\mf{g}(N,R)$ (also known as current algebras) when the group action on the Riemann surface has a nontrivial stabiliser. This follows from their local expansions determined by Duffield, Lombardo and the author \cite{duffield2023wild}.

All automorphic Lie algebras discussed to this point are examples of the huge class of equivariant map algebras. Lombardo, Veselov and the author also studied matrix-valued maps on the hyperbolic plane, equivariant with respect to the modular group $\SLNZ[2]$ and its subgroups of finite index \cite{10.1093/imrn/rnab376}. The terminology of automorphic Lie algebras was used once more, but these automorphic Lie algebras are no longer examples of equivariant map algebras.

For the reader with an interest in central extensions of automorphic Lie algebras we refer to Bremner \cite{bremner1994generalized, bremner1995four}, and for derivations we refer to Cox, Guo, Lu and Zhao \cite{cox2014n, cox2016module}, who studied these topics for the $M$-point loop algebras $\mf g \otimes_{\mb C} \mb C[t, (t-a_1)^{-1}, \ldots, (t-a_{M-1})^{-1}]$, which are automorphic Lie algebras with trivial symmetry group. Cox et al.~also describe the actions of Platonic solids on the space of derivations in detail. This work is taken further by Dos Santos \cite{dossantos2022on}. Skrypnyk \cite{skrypnyk2012quasi} and Bury and Mikhailov \cite{bury2021automorphic} provided central extensions and derivations with the desired symmetry properties for the automorphic Lie algebras described in the respective papers, allowing for interesting constructions to study integrable systems.

In this paper we proceed with the classification project for automorphic Lie algebras on the Riemann sphere. Global bases of automorphic Lie algebras with $\mf g=\slnc[N]$ have been published in \cite{knibbeler2017higher}. For all other Lie algebras $\mf g$, we have so far only understood the local expansions of the associated automorphic Lie algebras \cite{duffield2023wild}. We will once again impose a restriction on the representations $\rg\to\Aut{\mf g}$ used for the automorphic Lie algebra: we require it to factor through $\Aut{\overline{\mb{C}}}$, and call such representations factorisable. This is independent of regularity $\mf g^\rg=\{0\}$. That is, some factorisable representations are regular, but most of them are not. Moreover, many inner regular actions are factorisable, but not all of them (cf.~Proposition \ref{prop:one inner regular action not factorisable}).

For automorphic Lie algebras based on a simple Lie algebra $\mf g$ and a factorisable action, we obtain the global structure in Theorem \ref{thm:aliam}. This result solves some of the main open problems in the field, that have kept researchers busy since the appearance of \cite{lombardo2010on} in 2010: existence of Cartan subalgebras (CSA), the $2$-cocycle problem and the isomorphism question.
The CSA-existence problem is positively solved for these automorphic Lie algebras. With the terminology of \cite{knibbeler2019hereditary} we would say the automorphic Lie algebras algebras are hereditary.
The $2$-cocycles defining the Lie algebra, which were only partly described for hereditary automorphic Lie algebras, are now fully described. And with that we answer one direction of the isomorphism question: since the $2$-cocycle (\ref{eq:2cocycles}) only depends on the orders of the elliptic points, we have isomorphisms between automorphic Lie algebras with distinct reduction group when the orders of elliptic points in the holomorphic domain of the automorphic Lie algebra coincide.
It is still an open problem whether the reverse holds, claiming the non-existence of isomorphisms between automorphic Lie algebras.

Our approach is constructive, and another result is the construction of automorphic Lie algebras based on any finite dimensional Lie algebra $\mf g$, Theorem \ref{thm:alia}, allowing applications that require the concretisation of the automorphic Lie algebra by meromorphic equivariant matrices. For applications of the Lie structure in question, one can omit the computation of the equivariant matrices and directly use the structure constants given in Theorem \ref{thm:aliam}.

The seed for this paper was planted during the fruitful research period where Lombardo, Veselov and the author worked on automorphic Lie algebras in hyperbolic geometry, leading to \cite{10.1093/imrn/rnab376}. Veselov asked the author if an analogous productive construction with a general intertwining operator is possible for automorphic Lie algebras in spherical geometry. Such a construction had never been found. Nonetheless, work of Olver and Sanders \cite{olver2000transvectants} highlights a duality between modular forms on the upper half plane and invariant forms on the projective plane, or the Riemann sphere, suggesting that a general intertwiner for automorphic Lie algebras on the sphere dual to the intertwiner in the hyperbolic case exists. In this paper we find such an intertwiner. 
The construction is simple, yet this construction is hard to guess just from looking at the resulting matrix. Much harder than in the hyperbolic case, which explains the chronological order of discovery.

In Section \ref{sec:cit} we study the classical invariant theory of finite subgroups of $\Aut{\overline{\mb C}}$. The usual setting of rational expressions of polynomials in two variables of homogeneous degree $0$ is generalised to degree $k\in\mb Z$, gearing up to study vector-valued invariants. For each degree $k$ we construct a single generator of the invariants as a module over the better known degree $0$ invariants.
In Section \ref{sec:alias} we construct an intertwiner which sends the invariants of Section \ref{sec:cit} to vector-valued invariants, and we apply it to the case where the vector-space in this construction has a Lie algebra structure, in order to arrive at automorphic Lie algebras. This is a new method to construct automorphic Lie algebra on the Riemann sphere, and is  analogous to the method introduced in \cite{10.1093/imrn/rnab376} to construct automorphic Lie algebra on the upper half plane. It allows us to compute automorphic Lie algebras which have been out of reach so far, and even construct Chevalley normal forms for them.
This is demonstrated in Section \ref{sec:examples} which is devoted to examples and applications of the results in Section \ref{sec:alias} and ends with tables of structure constants for automorphic Lie algebras of all exceptional Lie types. Further research directions are proposed in Section \ref{sec:further research directions}.

\section{Homogeneous invariants of polyhedral groups}
\label{sec:cit}
\subsection{Data of polyhedral groups}
\label{sec:polyhedral groups}

A polyhedral group is a finite subgroup of $\SO(3)$, a finite group of rotations of $3$-dimensional real space, or rotations of the $2$-sphere. For each polyhedral group $\rg$, we let $\Omega$ be an index set for the $\rg$-orbits $S_i$, $i\in\Omega$, in the sphere, whose elements have nontrivial stabiliser groups (nontrivial isotropy). That is, each element in $S_i$ is fixed by an element of $\rg$ other than the identity. Let $d_i=|S_i|$ be the size of such an exceptional orbit and $\nu_i$ the order of an associated stabiliser subgroup. In particular
\[d_i\nu_i=|\rg|,\qquad i\in\Omega.\]
We will make use of the greatest common denominator $d=\gcd(\{d_i: i\in\Omega\})$ and the least common multiple $\nu=\lcm(\{\nu_i:i\in\Omega\})$, which satisfy $d\nu=|\rg|$ (and we remark that the integer $d$ equals the order of the Schur multiplier of $\rg$).

With a classical clever counting argument one can derive the formula
\begin{equation}
\label{eq:finite subgroups of SO(3) original}
2\left(1-\frac{1}{|\rg|}\right)=\sum_{i\in\Omega}\left(1-\frac{1}{\nu_i}\right)
\end{equation}
which in turn classifies the polyhedral groups, listed in Table \ref{tab:various properties of polyhedral groups} (see \cite{toth2002finite} for more details). 
\begin{center}
\begin{table}[ht!] 
\caption{Data of polyhedral groups}
\label{tab:various properties of polyhedral groups}
\begin{center}
\begin{tabular}{lllllllll}\hline
$\rg$&$|\Omega|$&$(\nu_i)$&$\nu$&$(d_i)$&$d$&$|\rg|$&$\ag$&{$\abg$}\\
\hline
$\cg{n}$&$2$&$(n,n)$&$n$&$(1,1)$&$1$&$n$&$\cg{n}$&$\cg{2n}$\\
$\dg{n=2m-1}$&$3$&$(n,2,2)$&$2n$&$(2,n,n)$&$1$&$2n$&$\cg{2}$&$\cg{4}$\\
$\dg{n=2m}$&$3$&$(n,2,2)$&$n$&$(2,n,n)$&$2$&$2n$&$\cg{2}\times \cg{2}$&$\cg{2}\times \cg{2}$\\
$\tg$&$3$&$(3,3,2)$&$6$&$(4,4,6)$&$2$&$12$&$\cg{3}$&$\cg{3}$\\
$\og$&$3$&$(4,3,2)$&$12$&$(6,8,12)$&$2$&$24$&$\cg{2}$&$\cg{2}$\\
$\yg$&$3$&$(5,3,2)$&$30$&$(12,20,30)$&$2$&$60$&$1$&$1$\\
\hline 
\end{tabular}
\end{center}
\end{table}
\end{center}

A well known elegant construction using quaternions establishes a short exact sequence
\begin{equation*}
1\rightarrow\{\pm 1\}\to\SU(2)
\to\SO(3)\to\{1\}
\end{equation*}
of groups.
We define $\brg$ as the preimage of $\rg$ under the $2:1$ map $\SU(2)
\to\SO(3)$. 

The group $\SU(2)$ acts naturally on $\mb{C}[X,Y]$, and preserves the finite dimensional subspaces $\mb{C}[X,Y]_k$ of polynomials of homogeneous degree $k$ (forms). For all isotypical components in $\mb{C}[X,Y]$, the generating function was found by Kostant \cite{kostant1984on,kostant2006the} and Springer \cite{springer1987poincare}. Kostant describes these generating functions in terms of the Lie algebras associated to the binary polyhedral groups through the McKay correspondence (cf.~Appendix \ref{sec:kostant}).

A ground form for $\rg$ or $\brg$ is a form $P_j$ of degree $d_j$ that vanishes on $S_j$ (more precisely the image of $S_j$ under the map sending the $2$-sphere conformally to the complex projective line). The first goal of this paper is to understand the invariants in \[R=\mb{C}[X,Y,P_j^{-1}].\] 
The subspace $R_k$ of $R$ consists of those rational expressions with homogeneous degree $k\in\mb{Z}$. 
It is well known that ring of invariants $R^\rg_0$ is a polynomial ring in one variable, reflecting the fact that the quotient surface has genus zero. We aim to understand $R^\rg_k$ as $R^\rg_0$-module in this section and find the following main result.
\begin{Theorem}For $k\in2\mb{Z}$, the $R^\rg_0$-module $R^\rg_k$ is cyclic and generated by 
\[P_k=P_j^{\nu_j\ell}\prod_{i\in\Omega} P_i^{\res_i(k/2)},\quad 
\ell=\frac{k-\sum_{i\in\Omega} \res_i(k/2) d_i}{|\rg|}
\]
where the residue is taken modulo $\nu_i$. For $k\in2\mb{Z}+1$ we have $R^{\brg}_k=\{0\}$.
\end{Theorem}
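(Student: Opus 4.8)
The plan is to split on the parity of $k$. For odd $k$ the claim is almost immediate: the central element $-1\in\brg$ acts on $\mb C[X,Y]_1$ as $-\id$, hence (degree being additive, and $-1$ acting on $P_j^{\pm1}$ by the corresponding sign) it acts on every element of $R_k$ as the scalar $(-1)^k$, which for odd $k$ has no nonzero fixed point; so $R^\brg_k=\{0\}$. For even $k$ that same element acts trivially on $R_k$, so $R^\brg_k=R^\rg_k$ and I may work with the genuine $\rg=\brg/\{\pm1\}$-action on $\cp$; I will also use the quoted fact that $R^\rg_0=\mb C[\hj]$ is a polynomial ring in a Hauptmodul $\hj$, which amounts to $(\cp\setminus S_j)/\rg$ being $\mb{CP}^1$ with one point removed.

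Fix even $k$. First I would check that $P_k$ is a well-defined element of $R_k$. Rearranging \eqref{eq:finite subgroups of SO(3) original} gives $\sum_{i\in\Omega}d_i\equiv 2\pmod{|\rg|}$, and since $d_i\nu_i=|\rg|$ one has $\res_i(k/2)\,d_i\equiv(k/2)\,d_i\pmod{|\rg|}$ for each $i$; summing over $i$ yields $\sum_i\res_i(k/2)\,d_i\equiv k\pmod{|\rg|}$, so $\ell\in\mb Z$, and then $\deg P_k=|\rg|\,\ell+\sum_i\res_i(k/2)\,d_i=k$. As only $P_j$ may carry a negative exponent, $P_k\in R_k$. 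Writing $b_i$ for the exponent of $P_i$ in $P_k$, note also $\sum_i d_ib_i=\deg P_k=k$ and $b_i\equiv k/2\pmod{\nu_i}$ for every $i\in\Omega$, straight from the definitions.

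The crux is to show $P_k$ is $\rg$-invariant. Each ground form is a $\brg$-semi-invariant, $g\cdot P_i=\chi_i(g)P_i$ with $\chi_i\colon\brg\to\mb C^\ast$ a character, because $\brg$ fixes the divisor $S_i$; so $\brg$ acts on $P_k=\prod_i P_i^{b_i}$ through the character $\prod_i\chi_i^{b_i}$, and it is enough to show this character is trivial. Every element of $\brg$ fixes a point of some exceptional orbit (for $g\neq\pm1$, the poles of the corresponding rotation; $-1$ fixes $\cp$ pointwise), so $\brg$ is a union of point stabilisers, each of which is cyclic of order $2\nu_{i'}$ (a finite abelian subgroup of $\SU(2)$); hence it suffices to check triviality of $\prod_i\chi_i^{b_i}$ on a generator $h_{i'}$ of the stabiliser of one chosen point $p_{i'}$ in each orbit $S_{i'}$. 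Working in a coordinate in which $h_{i'}=\diag(\zeta_{i'}^{-1},\zeta_{i'})$ with $\zeta_{i'}$ a primitive $2\nu_{i'}$-th root of unity, and using that $P_i$ vanishes to order exactly $1$ at $p_i$ and is nonzero at $p_{i'}$ for $i'\neq i$, one reads off $\chi_i(h_{i'})=\zeta_{i'}^{\,d_i}$ for $i'\neq i$ and $\chi_i(h_i)=\zeta_i^{\,d_i-2}$. Therefore $\prod_i\chi_i(h_{i'})^{b_i}=\zeta_{i'}^{\,\sum_i d_ib_i-2b_{i'}}=\zeta_{i'}^{\,k-2b_{i'}}=1$, since $b_{i'}\equiv k/2\pmod{\nu_{i'}}$ forces $k-2b_{i'}\equiv 0\pmod{2\nu_{i'}}$. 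Hence $P_k$ is $\brg$-invariant, and being of even degree it lies in $R^\rg_k$.

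Finally I would prove $R^\rg_k=R^\rg_0\,P_k$. For $0\neq f\in R^\rg_k$ the quotient $f/P_k$ is a $\rg$-invariant rational function of degree $0$ on $\cp$, so it suffices to show it has no pole off $S_j$: then $f/P_k\in R^\rg_0=\mb C[\hj]$ and $f=(f/P_k)P_k\in R^\rg_0\,P_k$, while the reverse inclusion is clear since $P_k\in R^\rg_k$. Away from the exceptional orbits $P_k$ is non-vanishing and $f$ is regular, so the only thing to check is at $q\in S_i$ with $i\neq j$, where $\ord{q}{P_k}=\res_i(k/2)$. Viewing $f$ near $q$ as a section of $\mc O(k)\cong(T\cp)^{\otimes k/2}$ and using invariance under the cyclic stabiliser $\brg_q$, which acts on a local coordinate by a primitive $\nu_i$-th root of unity, one gets $\ord{q}{f}\equiv k/2\pmod{\nu_i}$; combined with $\ord{q}{f}\ge 0$ this gives $\ord{q}{f}\ge\res_i(k/2)=\ord{q}{P_k}$, so $f/P_k$ is regular at $q$. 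The main obstacle I anticipate is the character computation in the third paragraph — pinning down $\chi_i$ on the point stabilisers and verifying the product is trivial; the remainder is bookkeeping with divisors together with the order-one vanishing of the ground forms.
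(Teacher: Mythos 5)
Your proof is correct, and for the central step it takes a genuinely different route from the paper. The paper establishes invariance of $P_k$ by appealing to its Lemma on the characters of the ground forms (items such as $\chi_i^{\nu_i}=\chi$ and $\prod\chi_i=\chi^{|\Omega|-2}$), a lemma that is itself verified case by case against the tables in the appendix, and then runs the bookkeeping through the homomorphisms $\ch$, $\overline{\deg}$ and the section $\ex$; integrality of $\ell$ is deduced from $\bar{\ex}$ being a monomorphism. You instead compute the characters uniformly and geometrically: since $\brg$ is the union of the binary stabilisers, each cyclic of order $2\nu_{i'}$, triviality of $\prod_i\chi_i^{b_i}$ reduces to generators $h_{i'}$, where $\chi_i(h_{i'})=\zeta_{i'}^{\,d_i-2\,\mathrm{ord}_{p_{i'}}(P_i)}$ gives exactly your two formulas (the possible second fixed point of $h_{i'}$ lying in some $S_i$ contributes $\zeta_{i'}^{+1}$ like a free point, so the count still closes — worth one extra sentence if you write this up), and the exponent $k-2b_{i'}$ vanishes modulo $2\nu_{i'}$ precisely because $b_{i'}\equiv k/2\pmod{\nu_{i'}}$. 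This buys you a self-contained, case-free argument that does not rely on the classification tables; the paper's route buys reusable structural data (the maps $\ex$ and $\ch$) that it needs later for the $2$-cocycles. Your integrality argument via $\sum_i d_i\equiv 2\pmod{|\rg|}$ and your generation step via $\mathrm{ord}_q(f)\equiv k/2\pmod{\nu_i}$ at points of $S_i$ are essentially the paper's arguments (the latter is its "order of a pole is a multiple of the ramification number, but zeros of $P_k$ off $S$ have smaller order" step, phrased more directly as a congruence on orders of the equivariant section).
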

This paper can also be understood as a story how (\ref{eq:finite subgroups of SO(3) original}) rewritten as
\begin{equation}
\label{eq:finite subgroups of SO(3)}
\sum_{i\in\Omega}d_i=(|\Omega|-2)|\rg|+2
\end{equation}
explains the isomorphisms between automorphic Lie algebras on the Riemann sphere.

\subsection{Characters and degrees}
\label{sec:characters and degrees}
Ground forms are necessarily relative invariant: there exists $\chi_i\in\Hom(\brg,\mb{C}^\ast)$ such that $\gamma P_i=\chi_i(\gamma)P_i$ for all $\gamma$ in $\brg$. We say that $\chi_i$ is the character of the ground form $P_i$. Henceforth, sums and products without subscript run over $i\in\Omega$. 

\begin{Lemma}
\label{lem:characters}
Let $\chi_i$ be the character of the ground form $P_i$, $i\in\Omega$.
\begin{enumerate}[label=(\roman*)]
\item \label{item:generation} The characters $\chi_i$ generate the group $\Hom(\brg,\mb{C}^\ast)$.
\item \label{item:existence character} There exists a character $\chi$ such that $\chi_i^{\nu_i}=\chi$ for all $i\in\Omega$ and $\prod\chi_i=\chi^{(|\Omega|-2)}$. 
\item \label{item:order character} The order of $\chi$ is $2/d$.
\item \label{item:hom} $\Hom(\brg,\mb{C}^\ast)/\langle\chi\rangle\cong\Hom(\rg,\mb{C}^\ast)$.  
\item \label{item:order hom} $\nu|\Hom(\rg,\mb{C}^\ast)|=\prod \nu_i$.
\end{enumerate}
\end{Lemma}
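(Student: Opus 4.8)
\emph{Strategy.} The whole lemma can be reduced to a single local computation: how a generator of a point stabiliser acts on each ground form. Fix $i\in\Omega$, a point $p_i\in S_i$, and let $s_i\in\brg$ generate its stabiliser, which is cyclic of order $2\nu_i$ with $s_i^{\nu_i}=-1$. Choosing coordinates with $p_i=[0:1]$ I may write $s_i=\diag(\omega_i,\omega_i^{-1})$ for a primitive $2\nu_i$-th root of unity $\omega_i$, with the convention $s_i\cdot X=\omega_i X$, $s_i\cdot Y=\omega_i^{-1}Y$, so that $X^aY^b$ has $s_i$-weight $\omega_i^{a-b}$. Each $P_j$ is a relative invariant, hence an $s_i$-eigenvector, and its eigenvalue $\chi_j(s_i)$ is the weight of its lowest monomial in $X$, i.e.\ is determined by $\ord{p_i}{P_j}$. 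Since ground forms vanish simply on their orbits, $\ord{p_i}{P_i}=1$ and $\ord{p_i}{P_j}=0$ for $j\neq i$, giving $\chi_i(s_i)=\omega_i^{2-d_i}$ and $\chi_j(s_i)=\omega_i^{-d_j}$ for $j\neq i$. I expect this to be the only genuinely geometric input; the rest is bookkeeping.

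\emph{Parts \ref{item:existence character} and \ref{item:order character}.} Raising to the stabiliser order and using $\nu_jd_j=|\rg|$ together with $\omega_i^{\nu_i}=-1$, both cases collapse to $\chi_j^{\nu_j}(s_i)=(-1)^{d_i}$ \emph{independently of $j$} (for $j=i$ directly, and for $j\neq i$ via $\omega_i^{-|\rg|}=\omega_i^{-d_i\nu_i}=(-1)^{d_i}$). As the $s_i$ generate $\brg$ (their images generate the rotation group and $-1=s_i^{\nu_i}$), all the characters $\chi_j^{\nu_j}$ coincide; this is the common $\chi$, with $\chi(s_i)=(-1)^{d_i}$. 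For the product identity I sum the monomial weights: $\prod_j\chi_j(s_i)=\omega_i^{\,2-\sum_j d_j}$, and substituting $\sum_jd_j=(|\Omega|-2)|\rg|+2$ from \eqref{eq:finite subgroups of SO(3)} turns the exponent into $-(|\Omega|-2)|\rg|$, so $\prod_j\chi_j(s_i)=\chi(s_i)^{|\Omega|-2}$ on each generator, whence $\prod\chi_i=\chi^{|\Omega|-2}$. Finally $\chi(s_i)^2=1$ forces $\chi^2=\triv$, and $\chi=\triv$ iff every $d_i$ is even iff $d=\gcd(d_i)=2$; hence $\chi$ has order $2/d$, which is \ref{item:order character}.

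\emph{Parts \ref{item:generation} and \ref{item:order hom}.} For \ref{item:generation} I argue on $\cp$. Every character of $\brg$ occurs in $\mb{C}[X,Y]$ (the restriction to $\brg$ of $\bigoplus_k\mb{C}[X,Y]_k$ contains the regular representation), so let $f$ realise an arbitrary $\psi$. Its divisor is effective and $\brg$-invariant, hence a non-negative combination of the $S_i$ and of free $\rg$-orbits; comparing divisors and degrees expresses $f$ as a constant times a product of the $P_i$ and of forms $Q$ cutting out free orbits. Such a $Q$ is nonzero at every $p_i$, so the same eigenvalue computation gives $\chi_Q(s_i)=\omega_i^{-|\rg|}=(-1)^{d_i}=\chi(s_i)$, i.e.\ $Q$ has character $\chi=\chi_i^{\nu_i}$; thus $\psi$ lies in $\langle\chi_1,\dots,\chi_{|\Omega|}\rangle$. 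Part \ref{item:order hom} I read off the von Dyck presentation $\rg=\langle g_1,\dots,g_{|\Omega|}\mid g_i^{\nu_i},\ \textstyle\prod_i g_i\rangle$: then $\ag\cong\Hom(\rg,\mb{C}^\ast)$ is the cokernel of the relation matrix, whose order is the gcd of its maximal minors, namely $\gcd_{i<j}(\nu_i\nu_j)$, and a prime-by-prime check identifies this with $\prod_i\nu_i/\lcm_i\nu_i=\prod_i\nu_i/\nu$.

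\emph{Part \ref{item:hom} --- the main obstacle.} Here a mere equality of orders is not enough; I need an actual isomorphism. By \ref{item:generation} and \ref{item:existence character}, $\homgc$ is a quotient of the abelian group $\tilde G$ presented by $\chi_1,\dots,\chi_{|\Omega|}$ with relations $\chi_i^{\nu_i}=\chi_j^{\nu_j}$, $\prod_i\chi_i=\chi^{|\Omega|-2}$ and $\chi^{2/d}=\triv$. Killing $\langle\chi\rangle$ collapses these to the abelianised von Dyck relations $\chi_i^{\nu_i}=\triv$, $\prod_i\chi_i=\triv$, so $\tilde G/\langle\chi\rangle\cong\ag\cong\Hom(\rg,\mb{C}^\ast)$, of order $\prod\nu_i/\nu$ by \ref{item:order hom}; consequently $|\tilde G|\le (2/d)\,|\Hom(\rg,\mb{C}^\ast)|$. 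To upgrade the surjection $\tilde G\twoheadrightarrow\homgc$ to an isomorphism I compare with the independently computed index $[\homgc:\Hom(\rg,\mb{C}^\ast)]$: the inflation--restriction sequence of $1\to\{\pm1\}\to\brg\to\rg\to1$ identifies the cokernel of inflation with the image of $\psi\mapsto\psi(-1)$ in $\{\pm1\}$, which is trivial exactly when $-1\in[\brg,\brg]$, i.e.\ exactly when $\brg$ is a stem extension, i.e.\ when the order $d$ of the Schur multiplier of $\rg$ equals $2$. This yields $|\homgc|=(2/d)\,|\Hom(\rg,\mb{C}^\ast)|$, squeezes $|\tilde G|=|\homgc|$, and makes the surjection an isomorphism; quotienting by $\langle\chi\rangle$ then gives \ref{item:hom}. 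The hard point is precisely this index bound --- the uniform equivalence $-1\in[\brg,\brg]\iff d=2$ --- for which I would rely on the identification of $d$ with the order of the Schur multiplier already recorded in the text.
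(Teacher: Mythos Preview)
Your approach is genuinely different from the paper's, which proves the lemma by case-by-case inspection of Table~\ref{tab:various properties of polyhedral groups} and Appendix~\ref{sec:case by case analysis}, supplementing only items~\ref{item:generation} and~\ref{item:existence character} with partial conceptual arguments. Your local eigenvalue computation at the stabilisers $s_i$ is elegant and yields \ref{item:existence character}, \ref{item:order character} and the product formula uniformly---more than the paper's conceptual argument for~\ref{item:existence character}, which only covers the noncyclic case and only shows the $\chi_i^{\nu_i}$ coincide, without the product identity. Your argument for~\ref{item:generation} is close in spirit to the paper's (both factor a relative invariant through its zero divisor), though you are more explicit about the free-orbit factors and their character. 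Your treatment of~\ref{item:order hom} via the Smith normal form of the abelianised von Dyck relations is independent of the paper's tabulated values.

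There is, however, a gap in your argument for~\ref{item:hom}. The step ``$\brg$ is a stem extension $\Leftrightarrow d=2$'' is not a formal consequence of the identification $d=|M(\rg)|$ recorded in the text. The direction $d=1\Rightarrow -1\notin[\brg,\brg]$ does follow (a stem kernel has order dividing $|M(\rg)|$), but $d=2\Rightarrow -1\in[\brg,\brg]$ amounts to knowing that the \emph{particular} central extension $\brg$ realises the nontrivial class in $\Hom(M(\rg),\zn{2})$, i.e.\ that $\brg$ is the Schur cover. That is true for these groups but is a separate fact, not deducible from $|M(\rg)|=2$ alone; a group with Schur multiplier of order $2$ also admits non-stem central extensions by $\zn{2}$.

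The gap is easily closed with tools you already have. Since $-1\in\SU(2)$ acts on $\mb{C}[X,Y]_k$ by $(-1)^k$, one has $\chi_i(-1)=(-1)^{d_i}$. By~\ref{item:generation} the $\chi_i$ generate $\hombgc$, so the image of the restriction $\psi\mapsto\psi(-1)$ is exactly $\langle(-1)^{d_i}:i\in\Omega\rangle=\langle(-1)^{d}\rangle$, of order $2/d$. This gives $|\hombgc|=(2/d)\,|\Hom(\rg,\mb{C}^\ast)|$ directly, with no appeal to Schur covers, and your squeeze argument then goes through. As a minor polish: you should choose the $s_i$ compatibly, e.g.\ via the von Dyck presentation, so that their images generate $\rg$; for arbitrary independent choices of $p_i$ this is not automatic.
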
 
\begin{proof}
All statements can be checked case by case, thereby finishing the proof (see Table \ref{tab:various properties of polyhedral groups} and Appendix \ref{sec:case by case analysis}). For some items we can provide a more satisfying explanation, but not for all of them.

The first statement can be understood using the fact that each irreducible representation of $\brg$ occurs in $\mb{C}[X,Y]$ (symmetric products of the natural representations, cf.~\cite[Problem 4.12.10]{etingof2011introduction}). In particular, each element of $\Hom(\brg,\mb{C}^\ast)$ occurs as a character of a form. Such a relative invariant form is a polynomial in ground forms (by induction on the zeros, cf.~\cite[Section 1.3]{toth2002finite}). Each term of this polynomial has the same character, which is a product of $\chi_i$, $i\in\Omega$.

The second statement can be understood for the noncyclic groups as follows. Any two out of three forms $P_i^{\nu_i}$ are linearly independent, but all three are dependent: \[\sum c_i P_i^{\nu_i}=0,\] as follows from the zeros and the fundamental theorem of algebra (cf.~\cite[Section 1.3]{toth2002finite}).
Suppose there exists $i,j,\gamma$ such that $\chi_i^{\nu_i}(\gamma)\ne\chi_j^{\nu_j}(\gamma)$. Then 
\begin{align*}0&=\chi_i^{\nu_i}(\gamma)\sum c_k P_k^{\nu_k}- \gamma \sum c_k P_k^{\nu_k}\\&=\chi_i^{\nu_i}(\gamma)\sum c_k P_k^{\nu_k}- \sum c_k \chi_k^{\nu_k}(\gamma)P_k^{\nu_k}
\\&=c_j( \chi_i^{\nu_i}(\gamma)-\chi_j^{\nu_j}(\gamma))P_j^{\nu_j}+c_k( \chi_i^{\nu_i}(\gamma)-\chi_k^{\nu_k}(\gamma))P_k^{\nu_k}
\end{align*} and $c_j( \chi_i^{\nu_i}(\gamma)-\chi_j^{\nu_j}(\gamma))\ne 0$ which contradicts the observation that any pair of forms $P_i^{\nu_i}$ are independent.
\end{proof}

We define two homomorphisms. Firstly the character
\begin{align*}
\ch:\prod \zn{\nu_i}\to \Hom(\brg,\mb{C}^\ast)/\langle\chi\rangle,
\end{align*}
sending $(\bar{n}_i)$ to the character $\prod \chi_i^{n_i}$, where $n_i\mod{\nu_i}=\bar{n}_i$. This map is well-defined due to Lemma \ref{lem:characters}, item \ref{item:existence character}. It is surjective by Lemma \ref{lem:characters}, item \ref{item:generation}.

Secondly, we define the degree
\newcommand{\odeg}{{\overline{\deg}}}
\begin{align*}
\odeg:\prod \zn{\nu_i}\to d\mb{Z}/|\rg|\mb{Z},
\end{align*}
sending $(\bar{n}_i)$ to the degree 
$\sum d_i n_i\mod{|\rg|}$, where 
$n_i\mod{\nu_i}=\bar{n}_i$. This is well-defined because $d_i\nu_i=|\rg|$, and it is a homomorphism. Moreover, it is surjective by Bezout's theorem in arithmetic.

Let $N_\ch$ and $N_\deg$ be the respective kernels of these maps. Their orders are
\[
|N_\ch|=\nu,\quad |N_\deg|=\nu^{-1}\prod \nu_i.
\]
Indeed, since the maps are epimorphisms, the order of the kernel is the order of the domain divided by the order of the image. Lemma \ref{lem:characters}, items \ref{item:hom} and \ref{item:order hom} then give this result. 

We will use the notation $\bar{m}^\Omega\in\prod \zn{\nu_i} $ for $(\bar{m},\bar{m})$ when $|\Omega|=2$ and $(\bar{m},\bar{m},\bar{m})$ when $|\Omega|=3$.
\begin{Lemma}
\label{lem:generated by 1}
$N_\ch$ is generated by $\bar{1}^\Omega$.
\end{Lemma}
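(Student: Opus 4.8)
The plan is to leverage the order count for $N_\ch$ that has just been established, namely $|N_\ch| = \nu$. Since $N_\ch$ has order $\nu$, it suffices to exhibit inside it a cyclic subgroup of order $\nu$, and the diagonal element $\bar 1^\Omega$ is the obvious candidate. So the claim reduces to two short verifications: that $\bar 1^\Omega$ actually lies in $N_\ch$, and that its order is exactly $\nu$.

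First I would confirm that $\bar 1^\Omega \in N_\ch$. By the definition of the character map, $\ch(\bar 1^\Omega) = \prod \chi_i$, and Lemma~\ref{lem:characters}\ref{item:existence character} identifies this product with $\chi^{\,|\Omega|-2}$, which lies in $\langle\chi\rangle$. Hence $\ch(\bar 1^\Omega)$ is trivial in $\Hom(\brg,\mb{C}^\ast)/\langle\chi\rangle$, i.e.\ $\bar 1^\Omega \in \ker\ch = N_\ch$.

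Second I would compute the order of $\bar 1^\Omega$ inside $\prod\zn{\nu_i}$. The element $\bar 1$ has order $\nu_i$ in $\zn{\nu_i}$, so the order of $(\bar 1,\dots,\bar 1)$ equals $\lcm_i \nu_i = \nu$ by the very definition of $\nu$. Therefore $\langle\bar 1^\Omega\rangle$ is a cyclic subgroup of $N_\ch$ of order $\nu$; comparing with $|N_\ch| = \nu$ forces $\langle\bar 1^\Omega\rangle = N_\ch$, which is the assertion.

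There is no genuine obstacle here: all the real work has been front-loaded into the identity $|N_\ch| = \nu$ (which in turn rests on Lemma~\ref{lem:characters}\ref{item:hom}--\ref{item:order hom}) and into item~\ref{item:existence character}. Should one prefer an argument independent of that count, one can instead verify the statement directly from Table~\ref{tab:various properties of polyhedral groups} together with the data of Appendix~\ref{sec:case by case analysis}, in the same spirit as the proof of Lemma~\ref{lem:characters}; but the order-comparison route above is the shortest and the one I would write.
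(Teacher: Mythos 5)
Your proof is correct and follows exactly the paper's argument: membership of $\bar 1^\Omega$ in $N_\ch$ via Lemma \ref{lem:characters}\ref{item:existence character}, the observation that its order in $\prod\zn{\nu_i}$ is $\lcm_i\nu_i=\nu$, and the comparison with $|N_\ch|=\nu$. The only difference is that you spell out the order computation which the paper compresses into "thus generates a subgroup of order $\nu$".
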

\begin{proof}
The element $\bar{1}^\Omega$ is contained in $N_\ch$ by Lemma \ref{lem:characters} item \ref{item:existence character} and thus generates a subgroup of order $\nu$. We have seen that $N_\ch$ has the same order, which finishes the proof.
\end{proof}
By (\ref{eq:finite subgroups of SO(3)}) we have \[\odeg(\bar{1}^\Omega)=\bar{2}.\]
Thus, the restriction of the degree map 
\[N_\ch\xrightarrow[]{\odeg}\langle \bar{2}\rangle\subset d\mb{Z}/|\rg|\mb{Z}\]
is an epimorphism (and an isomorphism, except for the case $\rg=\cg{2m}$).
We construct a right inverse:
\beq{eq:definition ex}
\bar{\ex}:\langle 2 \mod \rg \rangle\to N_\ch,\quad \bar{\ex}(\bar{2})=\bar{1}^\Omega,\quad \odeg(\bar{\ex}(\bar{k}))=\bar{k},
\eeq
and extend this map to $\ex:2\mb{Z}\to\prod\{0,\ldots,\nu_i-1\}$ in the canonical way.

The map $\ex$ will allow us to construct automorphic Lie algebras without going through the considerable trouble of computing them. For this reason we also provide $\ex$ explicitly in Table \ref{tab:exponents}, Appendix \ref{sec:case by case analysis}.

\subsection{Invariant forms}
Henceforth, $\rg$ continues to be a polyhedral group, $S$ one of its orbits on the sphere, and $P$ a relative invariant form of degree $|\rg|$ vanishing on $S$. This form $P$ is a linear combination of $P_i^{\nu_i}$ and $P_j^{\nu_j}$ which is unique op to nonzero scalar multiple.

The first goal of this paper is to understand the invariants in \[R=\mb{C}[X,Y,P^{-1}]\] of fixed degree $k$ (this is slightly more general than $\mb{C}[X,Y,P_j^{-1}]$ mentioned earlier). 

The element $-\Id$ in $\brg$ ensures that there are no $\brg$-invariants of odd degree, other than zero. At even degree, the action of this element is trivial, and we have an action of $\rg\cong\brg/\pm\Id$.

At degree $0$ we have \[R^\rg_0=\mb{C}[\mb{I}]\] where $\mb{I}$ can be chosen to be any nonconstant quotient $P_i^{\nu_i}P^{-1}$.
We aim to understand $R^\rg_k$ as $R^\rg_0$-module. 
For even weight $k$, define
\begin{equation}\label{eq:P_k}
    P_k=P^{\ell}\prod_i P_i^{\ex(k)_i},\quad \ell=\frac{k-\sum_i \ex(k)_i d_i}{|\rg|},
\end{equation}
using the map $\ex$ defined by (\ref{eq:definition ex}). When $k$ is odd, put $P_k=0$.
Our main result on invariant forms can be formulated as
\begin{Theorem}
\label{thm:invariant forms}
\[R^{\brg}_k=R^\rg_0 P_k.\]
\end{Theorem}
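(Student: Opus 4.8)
The plan is to prove the two assertions separately, starting with the easy one. For odd $k$, the element $-\Id \in \brg$ acts on $\mb{C}[X,Y]_k$ by $(-1)^k = -1$, hence on the whole localisation $R_k$ by $-1$; therefore any $\brg$-invariant of odd degree is zero, giving $R^{\brg}_k=\{0\}$. For even $k$, since $-\Id$ acts trivially we may work with $\rg \cong \brg/\{\pm\Id\}$ throughout.

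For even $k$, I would first verify that $P_k$ as defined in \eqref{eq:P_k} is genuinely an element of $R^\rg_k$: it is manifestly homogeneous of degree $\ell|\rg| + \sum_i \ex(k)_i d_i = k$ by the formula for $\ell$ (and one must check $\ell$ is a nonnegative integer — integrality follows since $\ex(k)$ is a right inverse of $\odeg$, i.e. $\sum_i \ex(k)_i d_i \equiv k \pmod{|\rg|}$, and nonnegativity can be arranged, using a negative power of $P$ to absorb the deficit if needed, which is why we localise at $P$). Its character is $\chi^\ell \cdot \prod_i \chi_i^{\ex(k)_i}$ where $\chi$ is the character of $P$; since $\ex(k) \in N_\ch$, we have $\prod_i \chi_i^{\ex(k)_i} \in \langle\chi\rangle$, so the total character lies in $\langle\chi\rangle$, and in fact I claim it is trivial — this requires tracking the power of $\chi$ carefully, but it is forced by the fact that $P_k$ must be a true invariant and there is essentially no other candidate once the degree is fixed. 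So the inclusion $R^\rg_0 P_k \subseteq R^{\brg}_k$ is the routine direction.

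The substantive direction is $R^{\brg}_k \subseteq R^\rg_0 P_k$. Take any $f \in R^{\brg}_k$. Multiplying by a large power of $P^{\nu}$ (which is $\brg$-invariant up to the character $\chi^\nu$; note $\chi^{2/d}$ is trivial so some power of $P$ is a true invariant) we may assume $f$ is a polynomial, i.e. $f \in \mb{C}[X,Y]$, relative invariant with some character $\psi$. By the argument already used in the proof of Lemma \ref{lem:characters} (induction on zeros via the fundamental theorem of algebra, cf.\ \cite[Section 1.3]{toth2002finite}), $f$ is a polynomial in the ground forms $P_i$, and after reducing modulo the relation $\sum c_i P_i^{\nu_i} = 0$ we may write $f$ as a sum of monomials $P^m \prod_i P_i^{a_i}$ with $0 \le a_i < \nu_i$; each such monomial carries the same character $\psi$, and since $f$ is a true invariant, $\psi$ is trivial, forcing $(\bar a_i) \in N_\ch$. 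By Lemma \ref{lem:generated by 1}, $N_\ch = \langle \bar 1^\Omega\rangle$, so $(\bar a_i) = \ex(k')$ for the appropriate residue, meaning every such monomial equals $P_{k'}$ times a power of the degree-zero generator $P_i^{\nu_i}P^{-1}$ — i.e.\ lies in $R^\rg_0 P_{k'}$. Dividing back out the power of $P^\nu$ we introduced returns us to degree $k$ and shows $f \in R^\rg_0 P_k$, using that $P_{k+|\rg|} = P \cdot P_k$ and more generally $P_{k + m|\rg|} = P^m P_k$ so that multiplication by powers of $P$ and by the invariant $\mb{I} = P_i^{\nu_i}P^{-1}$ moves freely between the generators $P_{k'}$.

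The main obstacle is the bookkeeping around the relation $\sum c_i P_i^{\nu_i}=0$ in the non-cyclic cases and the parallel degenerate behaviour in the cyclic cases (where $|\Omega|=2$ and there is no such single relation but instead $P_1^{\nu_1}, P_2^{\nu_2}$ together with $XY$-type considerations): one must be careful that the reduction to exponents $a_i < \nu_i$ is lossless on invariants, and that the exponent vector one lands on is exactly $\ex(k)$ rather than merely congruent to it modulo $N_\deg$. This is controlled by the fact that $\ex$ was constructed as an honest right inverse of $\odeg$ on $N_\ch$ together with $\odeg(\bar 1^\Omega) = \bar 2$ from \eqref{eq:finite subgroups of SO(3)}; I would isolate this as the crux of the argument and, if a uniform treatment proves awkward, fall back on the case-by-case table as in Lemma \ref{lem:characters}.
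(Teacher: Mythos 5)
Your treatment of odd $k$ and your verification that $\ell\in\mb{Z}$ coincide with the paper's. The one genuine gap is in your second step, the claim that the character of $P_k$ is trivial: you write that triviality ``is forced by the fact that $P_k$ must be a true invariant,'' which is circular, since the invariance of $P_k$ is exactly what is being established at that point, and knowing only that the character lies in $\langle\chi\rangle$ leaves a nontrivial possibility precisely for the groups $\cg{n}$ and $\dg{2m-1}$, where $\chi$ has order $2$. The paper closes this by a short explicit computation: writing $\ex(k)_i=k/2+p_i\nu_i$ and substituting into $k=\ell|\rg|+\sum_i\ex(k)_i d_i$, the identity $\sum_i d_i=(|\Omega|-2)|\rg|+2$ yields $\ell+(|\Omega|-2)k/2+\sum_i p_i=0$; since the character of $P_k$ is $\chi$ raised to exactly this exponent (Lemma \ref{lem:characters}, item \ref{item:existence character}), it is trivial. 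You need either this computation, or the longer detour of first running your surjectivity argument, separately proving $R^{\brg}_k\neq\{0\}$ for every even $k$, and then deducing invariance of $P_k$ by dividing one invariant by another; as written, the step is not proved.

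For the surjectivity direction your route is genuinely different from the paper's and appears sound. The paper divides: $G_k/P_k$ is a degree-zero invariant rational function, hence lies in $\mb{C}(\mb{I})$, and comparing pole orders (which are multiples of the ramification number) with the orders of the zeros of $P_k$ away from $S$ (all strictly less than the ramification number) forces $G_k/P_k\in\mb{C}[\mb{I}]$; note that this consumes the invariance of $P_k$ as an input. Your argument instead clears denominators by an even power of $P$, invokes the presentation of the relative invariants by ground forms, and pins down the exponent vector by combining the character constraint $(\bar{a}_i)\in N_\ch=\langle\bar{1}^\Omega\rangle$ with the degree constraint; it does not need the invariance of $P_k$ at all, which is a structural advantage. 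Your worry about the syzygy $\sum c_iP_i^{\nu_i}=0$ is largely unnecessary: the normalisation $0\le a_i<\nu_i$ is achieved simply by factoring $P_i^{\nu_i}=\mb{I}_i P$ with $\mb{I}_i\in R_0^\rg$, without appeal to the relation; the only input you genuinely need is that every monomial of a relative invariant written in ground forms carries the same character, which is the fact already used in the proof of Lemma \ref{lem:characters}. With the invariance computation supplied, your proof would be a valid, more algebraic alternative to the paper's function-theoretic one.
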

\begin{proof}
To show that $P_k$ lives in $R_k$ we need to show that $\ell$ is an integer.  We have
\begin{align*} 
\bar{\ex}(\ell|\rg|\mod |\rg|)
&=\bar{\ex}( \overline{k-\sum_i {\ex}({k})_i d_i})
\\&=\bar{\ex}( \bar{k}-\odeg(\bar{\ex}(\bar{k})))
\\&=\bar{\ex}( \bar{k})-\bar{\ex}(\bar{k})=0
\end{align*}
and $\bar{\ex}$ is a monomorphism. Therefore, $P_k\in R_k$. 

Next we show that $P_k$ is invariant. The character of $P_k$ is in $\langle\chi\rangle$ as $\ch(\bar{\ex}(\bar{k}))=0$. For the groups $\dg{2m}, \tg, \og, \yg$ this is sufficient to see that $P_k\in R_k^\rg$ since $\chi=1$. The remaining groups demand a little more attention. 

Recall that $\ex(k)_i=k/2+p_i\nu_i$ for some integer $p_i$ such that $0\le \ex(k)_i< \nu_i$. Hence, 
\begin{align*}
k&=\ell|\rg|+\sum_i \ex(k)_i d_i
\\&=\ell|\rg|+\sum_i (k/2+p_i\nu_i) d_i
\\&=\ell|\rg|+k/2\sum_i d_i+|\rg|\sum_i p_i
\\&=\ell|\rg|+k/2((|\Omega|-2)|\rg|+2)+|\rg|\sum_i p_i,
\end{align*}
so that 
\[\ell+(|\Omega|-2)k/2+\sum_i p_i=0.\]
The character of $P_k$ is $\chi^{\ell+(|\Omega|-2)k/2+\sum_i p_i}$ by Lemma \ref{lem:characters}, item \ref{item:existence character}, and thus we see that $F_k$ is invariant.

Finally, we show that $P_k$ generates $R_k^\rg$. Let $G_k$ be in $R_k^\rg$. Then $G_k/P_k\in\mb{C}(\mb{I})$, meaning $G_k=f(\mb{I})P_k$ for some rational function $f$. It remains to be shown that the poles of $f(\mb{I})$ are in $S$. Since $G_k$ has no poles outside $S$, such poles of $f(\mb{I})$ must be cancelled by zeros of $P_k$. Moreover, the order of any pole of $f(\mb{I})$ is a multiple of the ramification number. But all zeros of $P_k$ outside $S$ have order less than the ramification number. We conclude that $f(\mb{I})\in\mb{C}[\mb{I}]=R_0^\rg$.
\end{proof}

\section{Automorphic Lie algebras from $\SLNC[2]$ embeddings}
\label{sec:alias}
Let $\mf{g}$ be a complex finite dimensional Lie algebra from here onwards.  The space $\mf{g}[X,Y,P^{-1}]$, defined as $\mf g \otimes_\mb{C} \mb C[X,Y,P^{-1}]$, has a canonical Lie bracket that is linear over $\mb C[X,Y,P^{-1}]$. A homomorphism $\rho:\brg\to\Aut{\mf{g}}$ defines an action of $\brg$ on $\mf{g}[X,Y,P^{-1}]$ by Lie algebra homomorphisms, preserving $\mf{g}[X,Y,P^{-1}]_k$, the graded summands. 
The subalgebra of invariants of degree zero is the automorphic Lie algebra 
\[\mf{A}=\mf{g}[X,Y,P^{-1}]_0^{\brg}.\]
If the homomorphism $\rho$ is the restriction of a homomorphism $\bar{\rho}:\SLNC[2]\to\Aut{\mf{g}}$ we call the associated action of $\brg$ on $\mf{g}$ factorisable. In this section we show how automorphic Lie algebras defined by factorisable actions can be constructed and understood using Theorem \ref{thm:invariant forms} and a newly obtained uniform intertwiner (\ref{eq:modaut1}), with an approach analogous to \cite{10.1093/imrn/rnab376}.

The reader might have noticed that elements of automorphic Lie algebras were defined as meromorphic maps on the Riemann sphere in the introduction of this paper, and now these elements are functions of two variables $X$ and $Y$. This is simply a different perspective of the same thing. Since the degree in $X$ and $Y$ is zero in the automorphic Lie algebra, elements are in fact functions of $X/Y$. The linear transformations on the vector $(X,Y)$ translate to fractional linear transformations on the variable $X/Y$ on the Riemann sphere (cf.~Appendix \ref{sec:case by case analysis}).

\subsection{The classification of $\bar{\rho}:\SLNC[2]\rightarrow\Aut{\mf{g}}$}
We provide a brief description of the classification of embeddings of $\SLNC[2]$ into automorphism groups of simple Lie algebras. This classification will be used in the same sense it was used in \cite{10.1093/imrn/rnab376} and we give a similar summary as can be found there.

The classification of $\SLNC[2]$-embeddings is equivalent to the classification of nilpotent orbits: orbits of nilpotent elements in $\mf{g}$ under the action of the connected component of the identity $G=\Aut{\mf{g}}^0$ of the automorphism group. The latter classification is well described and listed in \cite{collingwood1993nilpotent}. We first describe the equivalence between the two classifications and then briefly recap the theorems of Jacobson-Morozov and of Kostant that enable the classification.

To classify Lie group morphisms $\bar{\rho}:\SLNC[2]\rightarrow\Aut{\mf{g}}$ when $\mf{g}$ is semisimple, it is sufficient to classify Lie algebra morphisms $\phi:\slnc[2]\rightarrow{\mf{g}}$. Indeed, since $\SLNC[2]$ is connected and $\bar{\rho}$ continuous, one only needs to consider maps $\bar{\rho}:\SLNC[2]\rightarrow\Aut{\mf{g}}^0=G$. The tangent map is $\rd\bar{\rho}:\slnc[2]\rightarrow T_1G$, and by semisimplicity of $\mf{g}$ we have $T_1G \cong \mf{g}$ \cite[Propositions 1.120 and 1.121]{knapp2002lie}. The exponential map intertwines these maps in the sense that $\bar{\rho}(\exp(X))=\exp(\rd\bar{\rho}(X))$. Since $\exp(\slnc[2])$ generates $\SLNC[2]$, we see that $\bar{\rho}$ is determined by its derivative $\rd\bar{\rho}$. Vice versa, any homomorphism $\phi:\slnc[2]\rightarrow{\mf{g}}$ defines a homomorphism $\bar{\rho}:\SLNC[2]\rightarrow\Aut{\mf{g}}$ by $\bar{\rho}(\exp(X))=\exp(\ad(\phi(X)))$.

The $G$-orbits of $\Hom(\slnc[2],\mf{g})$ are in turn in one-to-one correspondence with nilpotent orbits, by the following theorems, as outlined in \cite[Section 3.2]{collingwood1993nilpotent}. A standard triple will be a triple of elements $(H,E,F)$ of a Lie algebra with Lie brackets $[H,E]=2E$, $[H,F]=-2F$ and $[E,F]=H$, hence spanning a subalgebra isomorphic to $\slnc[2]$. The element $E$ is the nilpositive element of the triple.
\begin{Theorem}[Jacobson-Morozov] Let $\mf{g}$ be a complex semisimple Lie algebra. If $X$ is
a nonzero nilpotent element of $\mf{g}$, then it is the nilpositive element of a standard
triple. Equivalently, for any nilpotent element $X$, there exists a homomorphism $\phi:\slnc\rightarrow\mf{g}$ such that $\phi\left(\begin{pmatrix}0&1\\0&0\end{pmatrix}\right)= X$.
\end{Theorem}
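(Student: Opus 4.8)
A standard triple $(H,E,F)$ with $E=X$ is the same datum as a homomorphism $\phi\colon\slnc[2]\to\mf g$ carrying the nilpositive matrix to $X$, so it suffices to produce, for every nonzero nilpotent $X$, elements $H,F$ with $[H,X]=2X$, $[X,F]=H$ and $[H,F]=-2F$. The plan is to induct on $\dim\mf g$, splitting on whether the centraliser $\mf z_{\mf g}(X)$ contains a nonzero semisimple element.

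Suppose first that $Z\in\mf z_{\mf g}(X)$ is semisimple and nonzero. Then $\mf z_{\mf g}(Z)$ is reductive and proper (since $Z\notin\mf z(\mf g)=\{0\}$), so its derived subalgebra $\mf h$ is semisimple with $\dim\mf h<\dim\mf g$. As $[X,Z]=0$, $X$ lies in $\mf z_{\mf g}(Z)$; being $\ad$-nilpotent it has trivial projection onto the centre of that reductive algebra, so in fact $X\in\mf h$. By the induction hypothesis $X$ is the nilpositive element of a standard triple in $\mf h$, and the same triple works in $\mf g$.

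In the remaining case every element of $\mf z_{\mf g}(X)$ is $\ad$-nilpotent, and I would run the classical argument in three steps. \emph{Step 1:} by invariance and nondegeneracy of the Killing form $\kappa$ one has $[X,\mf g]^{\perp}=\mf z_{\mf g}(X)$; were $X\notin[X,\mf g]$ there would be $Z\in\mf z_{\mf g}(X)$ with $\kappa(X,Z)\neq0$, yet $\ad X$ is nilpotent and commutes with $\ad Z$, so $\ad(X)\ad(Z)$ is nilpotent and $\kappa(X,Z)=\tr\bigl(\ad X\,\ad Z\bigr)=0$, a contradiction; hence $X\in[X,\mf g]$, which after rescaling yields $H_0$ with $[H_0,X]=2X$. \emph{Step 2:} replace $H_0$ by the semisimple part $H$ of its abstract Jordan decomposition; since $X$ spans an $\ad(H_0)$-stable line on which the nilpotent part acts by zero, $[H,X]=2X$ still holds, and now $\ad H$ is semisimple with eigenspace decomposition $\mf g=\bigoplus_\lambda\mf g_\lambda$, $X\in\mf g_2$, $\ad X\colon\mf g_\lambda\to\mf g_{\lambda+2}$. \emph{Step 3:} it remains to find $F\in\mf g_{-2}$ with $[X,F]=H$, for then $[H,F]=-2F$ automatically. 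The subspace $[X,\mf g]$ is $\ad(H)$-stable, hence graded, with $[X,\mf g]\cap\mf g_0=[X,\mf g_{-2}]$, so it is enough that $H\in[X,\mf g]$, i.e.\ $\kappa(H,Z)=0$ for all $Z\in\mf z_{\mf g}(X)$. Splitting such a $Z$ into $\mf g_\lambda$-components --- each still centralising $X$ --- and using $\mf g_\lambda\perp\mf g_0$ for $\lambda\neq0$, this reduces to $Z\in\mf z_{\mf g}(X)\cap\mf g_0$; for such $Z$ the case hypothesis makes $\ad Z$ nilpotent, it commutes with the semisimple $\ad H$, so $\ad(H)\ad(Z)$ is nilpotent and $\kappa(H,Z)=0$. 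This produces $F$ and completes the triple.

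The main obstacle is Step 3: solving $[X,F]=H$. The Killing form localises the obstruction to an element of $\mf z_{\mf g}(X)$, but eliminating it needs that element to be $\ad$-nilpotent, which is exactly what the case split guarantees; the complementary case is then handled by descent to a proper semisimple subalgebra. The structure-theoretic facts invoked --- reductivity of the centraliser of a semisimple element, vanishing of the central component of an $\ad$-nilpotent element of a reductive algebra, and compatibility of Jordan decomposition with passage to subalgebras --- are standard, and I would not belabour them.
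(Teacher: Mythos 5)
Your sketch is correct: it is the classical Jacobson--Morozov argument (reduction to a proper semisimple subalgebra when $\mf z_{\mf g}(X)$ contains a nonzero semisimple element, and otherwise the three-step construction of $H$ and $F$ via the Killing-form identity $[X,\mf g]^{\perp}=\mf z_{\mf g}(X)$, passage to the semisimple part of $H_0$, and the graded solvability of $[X,F]=H$). The paper itself offers no proof of this theorem --- it is quoted as a known result with a reference to Collingwood--McGovern, where essentially this same argument appears --- so there is nothing to compare against; the facts you flag as standard (reductivity of $\mf z_{\mf g}(Z)$, semisimplicity of its centre, compatibility of Jordan decompositions) are indeed the only external inputs, and your handling of them is sound.
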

\begin{Theorem}[Kostant] Let $\mf{g}$ be a complex semisimple Lie algebra. Any two standard
triples $\{H,E, F\}$ and $\{H',E, F'\}$ with the same nilpositive element are conjugate
by an element of $\Aut{\mf{g}}^0$.
\end{Theorem}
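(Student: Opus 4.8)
This is the classical theorem of Kostant, and I would reproduce the standard proof (see, e.g., \cite[Section 3.4]{collingwood1993nilpotent}). Write $G=\Aut{\mf g}^0$ and, for a standard triple $(H,E,F)$, grade $\mf g=\bigoplus_{j\in\mb Z}\mf g_j$ by the $\ad H$-eigenvalue. The first step is a reduction: since the two triples share the nilpositive element $E$, it suffices to produce $g$ in the stabiliser $Z_G(E)$ with $g\cdot H=H'$. Indeed, $g$ then sends $(H,E,F)$ to $(H',E,g\cdot F)$, and two standard triples with the same neutral and nilpositive elements coincide --- if $(H',E,F_1)$ and $(H',E,F_2)$ are such, then $F_1-F_2$ lies in $Z_{\mf g}(E)$ and in the $(-2)$-eigenspace of $\ad H'$, which is impossible unless $F_1=F_2$, because the representation theory of $\slnc[2]$ forces $Z_{\mf g}(E)$ into the non-negative $\ad H'$-eigenspaces (the kernel of $\ad E$ on an irreducible summand is its highest weight line). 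So the theorem reduces to the claim that the neutral elements completing a fixed nilpotent $E$ to a standard triple form a single $Z_G(E)$-orbit.

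To establish this I would first record the structure of $\mf z:=Z_{\mf g}(E)$. With respect to the $(H,E,F)$-grading it decomposes as $\mf z=\mf z_0\oplus\mf z_{>0}$, $\mf z_j:=\mf z\cap\mf g_j$, where $\mf z_0=Z_{\mf g}(\langle H,E,F\rangle)$ is reductive and $\mf z_{>0}=\bigoplus_{j>0}\mf z_j$ is the nilradical; correspondingly the identity component of $Z_G(E)$ is $L\cdot U$ with $U$ unipotent and with Lie algebra $\mf z_{>0}$. Now put $z:=H'-H$. From $[z,E]=2E-2E=0$ we get $z\in\mf z$, and from $H=[E,F]$, $H'=[E,F']$ we get $z=[E,F'-F]\in[E,\mf g]$. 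Both $\mf z$ and $[E,\mf g]$ are $\ad H$-graded subspaces, so the degree-zero component $z_0$ of $z$ lies in $\mf z_0\cap[E,\mf g]$; this is zero, since the trivial $\slnc[2]$-isotypic component $\mf z_0$ meets the image of $\ad E$ trivially. Hence $z\in\mf z_{>0}$.

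It remains to build $u\in U$ with $u\cdot H=H+z$. For $w\in\mf g_j$ one has $[w,H]=-j\,w$, so for $w=\sum_{j>0}w_j\in\mf z_{>0}$ the element $\exp(\ad w)(H)-H$ lies in $\mf z_{>0}$ and equals $-\sum_j j\,w_j$ plus terms of strictly higher degree depending polynomially on the lower-degree $w_j$. Thus $w\mapsto H-\exp(\ad w)(H)$ is a polynomial self-map of the finite-dimensional space $\mf z_{>0}$ that is triangular for its strictly positive grading, hence bijective; choosing $w$ with image $z$ and setting $u=\exp(\ad w)$ gives $u\in U\subseteq Z_G(E)$ with $u\cdot H=H'$. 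Together with the reduction this proves the theorem.

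The step carrying the real content is the vanishing $z_0=0$: a priori $H'-H$ could acquire a nonzero semisimple part centralising the triple, and what excludes this is the interplay between the two presentations $H=[E,F]$, $H'=[E,F']$ and the fact that $\ad E$ kills, and has no image in, the trivial isotypic part of $\mf g$. Everything else is $\slnc[2]$-representation bookkeeping together with the routine inversion of a grading-triangular polynomial map.
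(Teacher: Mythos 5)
The paper does not prove this statement: it is quoted as a classical theorem of Kostant and used as a black box, with the reader referred to \cite[Section 3.2]{collingwood1993nilpotent}. Your argument is essentially the standard proof from that reference (reduce to conjugating the neutral elements by the centralizer of $E$, show $H'-H\in Z_{\mf g}(E)\cap[E,\mf g]$ has vanishing degree-zero part by $\slnc[2]$-isotypic bookkeeping, then solve triangularly in the positively graded part of $Z_{\mf g}(E)$), and it is correct except for one sign slip at the very end: if $w$ is chosen so that $H-\exp(\ad w)(H)=z=H'-H$, then $\exp(\ad w)(H)=H-z=2H-H'$, not $H'$. You should instead solve $\exp(\ad w)(H)-H=z$ (equivalently, take $w$ mapping to $-z$ under your map, or use $u^{-1}$); since your triangularity argument shows the map is bijective on $\mf z_{>0}$ and $-z\in\mf z_{>0}$, this is an immediate fix and does not affect the substance of the proof.
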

If $\mc{N}$ is the set of $G$-orbits of nilpotent elements of $\mf{g}$, then there is a map
\[\Hom(\slnc[2],\mf{g})/G\rightarrow\mc{N},\quad G\cdot\phi\mapsto G\cdot\phi\left(\begin{pmatrix}0&1\\0&0\end{pmatrix}\right).\]
It is clear that this map is well-defined. It is surjective by the Theorem of Jacobson-Morozov, and it is injective by the Theorem of Kostant.

The classification of nilpotent orbits $\mc{N}$ is described in \cite{collingwood1993nilpotent} (see the lists in Section 8.4 for all exceptional Lie algebras). At the same time, this classifies the embeddings $\bar{\rho}:\SLNC[2]\rightarrow\Aut{\mf{g}}$ needed for the current research.

\subsection{The intertwiner}
From the relative invariant form $P$ with zeros $S$ we construct the following matrix:
\begin{equation}
    \label{eq:modaut1}
    \modaut{P,1}=
    \begin{pmatrix}
        \frac{\partial_Y P}{\deg(P)P}&X\\[3mm]-\frac{\partial_X P}{\deg(P)P}&Y\\
    \end{pmatrix}
\end{equation}
where $\partial_X=\frac{\partial}{\partial X}$, $\partial_Y=\frac{\partial}{\partial Y}$ and $\deg(P)$ is the degree of $P$. This matrix allows the factorisation
\[
    \modaut{P,1}=
    \exp\left[\frac{X}{Y}\begin{pmatrix}0&1\\0&0\end{pmatrix}\right]
    \exp\left[-\frac{Y\partial_X P}{\deg(P)P}\begin{pmatrix}0&0\\1&0\end{pmatrix}\right]
    \begin{pmatrix}Y^{-1}&0\\0&Y\end{pmatrix},
\]
and has the following properties:
\begin{enumerate}[label=(\roman*)]
    \item\label{it:det1}$\det\modaut{P,1}=1$ for all $(X,Y)$ outside the zeros of $P$.
    \item\label{it:algebraic} The component functions of $\modaut{P,1}$ are in $\mb{C}[X,Y,P^{-1}]$.
    \item\label{it:equivariant} The columns of $\modaut{P,1}$ are $\brg$-equivariant: 
    \[\modaut[(\gamma(X,Y))]{P,1}=\gamma \modaut{P,1},\;\forall \gamma\in\brg.\]
    \item\label{it:homogeneous} The left column of $\modaut{P,1}$ has homogeneous degree $-1$, and the right has degree $1$.
\end{enumerate}
To explain the equivariance of \ref{it:equivariant}, and how we constructed the matrix $\modaut{P,1}$ in the first place, we remark that its left column is a scaling of the vector $\left(-\partial_Y P,\partial_X P\right)^T$. This is the Jacobian determinant, or the first transvectant, of $P$ with the $\SLNC[2]$-equivariant vector $\left( X,Y\right)$ (more on transvectants can be found in \cite{olver1999classical, olver2000transvectants}). Therefore, it is a relative invariant vector for $\rg$ with the same character as $P$.

Let $\bar{\rho}:\SLNC[2]\rightarrow\Aut{\mf{g}}$ be a representation with derivative $\rd\bar{\rho}:\slnc[2]\rightarrow\Der{\mf{g}}$ and set for notational convenience
\begin{equation}
    \label{eq:HEF}
    H=\rd\bar{\rho}\left(\begin{pmatrix}1&0\\0&-1\end{pmatrix}\right),\quad E=\rd\bar{\rho}\left(\begin{pmatrix}0&1\\0&0\end{pmatrix}\right),\quad F=\rd\bar{\rho}\left(\begin{pmatrix}0&0\\1&0\end{pmatrix}\right).
\end{equation}  
By $\mf{g}_k$ we will denote the eigenspace of $H$ with eigenvector $k$.

Using property \ref{it:det1} we can define $\modaut{P,\bar{\rho}}=\bar{\rho}(\modaut{1})$ for $(X, Y)\in\mb{C}^2\setminus S$ which reads
\beq{eq:modautlie}
\modaut{P,\bar{\rho}}=
\exp\left[\frac{X}{Y}E\right]
    \exp\left[-\frac{Y\partial_X P}{\deg(P)P}F\right]
    \exp\left[-\ln(Y)H\right].
\eeq
Here we use $\exp\left[-\ln(Y)H\right]$ as a notation for the operator that multiplies $H$-eigenvectors with eigenvalue $k$ by $Y^{-k}$.
\begin{Theorem}
\label{thm:aliahz}
$\modaut[]{P,\bar{\rho}}$ realises an isomorphism of graded Lie algebras and graded modules of $\mb{C}[X,Y,P^{-1}]^{\brg}$,
\[\mf{g}\otimes\mb{C}[X,Y,P^{-1}]^{\brg}\xrightarrow{\modaut[]{P,\bar{\rho}}}\mf{g}[X,Y,P^{-1}]^{\brg},\]
where $\mf{g}$ on the left-hand side is endowed with the grading defined by minus the weights as $\slnc[2]$-module, and $\mf{g}$ on the right-hand side has degree zero.
\end{Theorem}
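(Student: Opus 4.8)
The plan is to verify, in turn, that $\modaut[]{P,\bar{\rho}}$ (a) is well defined, i.e.\ carries $\mf g\otimes\mb C[X,Y,P^{-1}]$ into $\mf g[X,Y,P^{-1}]$; (b) is bijective, with inverse of the same shape; (c) is a morphism of Lie algebras and of $\mb C[X,Y,P^{-1}]$-modules, where the source carries the current-algebra bracket $[x\otimes f,y\otimes g]=[x,y]\otimes fg$; (d) intertwines the $\brg$-action that is trivial on $\mf g$ with the diagonal $\brg$-action, so that it restricts to the invariant subspaces named in the statement; and (e) is homogeneous of degree $0$ for the two gradings. Steps (b), (c), (e) are essentially formal once (a) is in place; the substance is in (a) --- a pole-cancellation statement --- and, conceptually, in (d), which is where a plain current algebra turns into an automorphic Lie algebra.

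For (a) and (b), write $\modaut[]{P,\bar{\rho}}=\bar{\rho}(\modaut[]{P,1})$ as in (\ref{eq:modautlie}). The homomorphism $\bar{\rho}$ is a morphism of algebraic groups: it is determined by $\rd\bar{\rho}$ (cf.\ Section \ref{sec:alias}) and restricts on the unipotent one-parameter subgroups of $\SLNC[2]$ to the exponentials of the nilpotent operators $E,F$ of (\ref{eq:HEF}), which are polynomial in the parameter; hence the matrix entries of $\bar{\rho}(g)$, in any basis of $\mf g$, are polynomials in the entries of $g$ and in $(\det g)^{-1}$. Property \ref{it:det1} gives $\det\modaut[]{P,1}=1$ and property \ref{it:algebraic} puts the entries of $\modaut[]{P,1}$ in $\mb C[X,Y,P^{-1}]$, so the entries of $\modaut[]{P,\bar{\rho}}$ lie there too, whence $\modaut[]{P,\bar{\rho}}(v\otimes f)=f\cdot\bigl(\modaut[]{P,\bar{\rho}}(v)\bigr)\in\mf g[X,Y,P^{-1}]$ for all $v\in\mf g$, $f\in\mb C[X,Y,P^{-1}]$. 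The inverse $\modaut[]{P,1}^{-1}$, obtained by the $2\times2$ adjugate formula, again has entries in $\mb C[X,Y,P^{-1}]$ and determinant $1$, so $\modaut[]{P,\bar{\rho}}^{-1}=\bar{\rho}(\modaut[]{P,1}^{-1})$ carries $\mf g[X,Y,P^{-1}]$ back, and $\modaut[]{P,\bar{\rho}}$ is a bijection between the two spaces.

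For (c): at every $(X,Y)\in\mb C^2\setminus S$ the operator $\bar{\rho}(\modaut{P,1})$ lies in $\Aut{\mf g}$, hence is a Lie algebra automorphism of $\mf g$; since the bracket of $\mf g[X,Y,P^{-1}]$ is computed pointwise, $\modaut[]{P,\bar{\rho}}$ is a Lie homomorphism, and since it acts only on the $\mf g$-tensor factor, fibrewise and $\mb C$-linearly, it commutes with multiplication by $\mb C[X,Y,P^{-1}]$ and is thus $\mb C[X,Y,P^{-1}]$-linear, a fortiori $\mb C[X,Y,P^{-1}]^{\brg}$-linear. For (d): equip $\mf g\otimes\mb C[X,Y,P^{-1}]$ with the $\brg$-action trivial on $\mf g$ and natural on the polynomial factor --- its invariants are exactly $\mf g\otimes\mb C[X,Y,P^{-1}]^{\brg}$ --- and $\mf g[X,Y,P^{-1}]$ with the diagonal action ($\rho$ on $\mf g$, natural on $\mb C[X,Y,P^{-1}]$). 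By $\mb C[X,Y,P^{-1}]$-linearity it suffices to show $\modaut[]{P,\bar{\rho}}(v)$ is a diagonal invariant for each $v\in\mf g$; and for $\gamma\in\brg$, using property \ref{it:equivariant} in the form $\modaut[(\gamma^{-1}(X,Y))]{P,1}=\gamma^{-1}\modaut{P,1}$ and $\rho=\bar{\rho}|_{\brg}$,
\[
\bigl(\gamma\cdot\modaut[]{P,\bar{\rho}}(v)\bigr)(X,Y)=\rho(\gamma)\bigl(\bar{\rho}(\modaut[(\gamma^{-1}(X,Y))]{P,1})\,v\bigr)=\rho(\gamma)\bar{\rho}(\gamma)^{-1}\bigl(\bar{\rho}(\modaut{P,1})\,v\bigr)=\bar{\rho}(\modaut{P,1})\,v,
\]
and the last expression is $\modaut[]{P,\bar{\rho}}(v)$ evaluated at $(X,Y)$, so $\modaut[]{P,\bar{\rho}}(v)$ is $\brg$-invariant. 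Hence $\modaut[]{P,\bar{\rho}}$ is equivariant and, by (b), restricts to a bijection between the two invariant subalgebras; with (c) this is an isomorphism of Lie algebras and of $\mb C[X,Y,P^{-1}]^{\brg}$-modules.

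For (e): property \ref{it:homogeneous} says the substitution $(X,Y)\mapsto(tX,tY)$, $t\in\mb C^\ast$, multiplies the left column of $\modaut[]{P,1}$ by $t^{-1}$ and the right by $t$, i.e.\ $\modaut[(tX,tY)]{P,1}=\modaut{P,1}\,\diag(t^{-1},t)$; applying $\bar{\rho}$ and using that $\bar{\rho}(\diag(t^{-1},t))=\exp[-\ln t\cdot H]$ scales $\mf g_k$ by $t^{-k}$ gives $\modaut[(tX,tY)]{P,\bar{\rho}}\,v=t^{-k}\bigl(\modaut{P,\bar{\rho}}\,v\bigr)$ for $v\in\mf g_k$, i.e.\ $\modaut[]{P,\bar{\rho}}(v)\in\mf g[X,Y,P^{-1}]_{-k}$. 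Therefore $\modaut[]{P,\bar{\rho}}$ sends $\mf g_k\otimes\mb C[X,Y,P^{-1}]_d$ into $\mf g[X,Y,P^{-1}]_{d-k}$, which is the degree-$(d-k)$ part in both gradings; being a bijection, it is an isomorphism of graded objects. I expect the main obstacle to be making (a) airtight --- confirming that the apparent denominators in (\ref{eq:modautlie}), namely powers of $Y$ from the $\exp[-\ln(Y)H]$ factor and surplus powers of $P$, actually cancel. That cancellation is precisely the rationality of $\bar{\rho}$ together with $\det\modaut[]{P,1}=1$ and the nilpotency of $E$ and $F$; once it is in hand, everything downstream is bookkeeping, with (d) carrying the conceptual weight.
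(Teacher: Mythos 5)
Your proof is correct and follows essentially the same route as the paper's: well-definedness via the algebraicity of finite-dimensional $\SLNC[2]$-representations together with properties \ref{it:det1} and \ref{it:algebraic}, the Lie/module structure from $\bar\rho$ landing in $\Aut{\mf g}$, the restriction to invariants from property \ref{it:equivariant}, and the grading from property \ref{it:homogeneous} (the paper reads the $Y^{-k}$ off the rightmost factor of the factorisation (\ref{eq:modautlie}), while you use the equivalent scaling identity $\modaut[(tX,tY)]{P,1}=\modaut{P,1}\,\diag(t^{-1},t)$). You merely supply more detail than the paper at each step, in particular the explicit equivariance computation and the remark that the factorisation's apparent $Y^{\pm k}$ denominators must not be used to establish well-definedness.
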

\begin{proof}
First we notice that $\modaut[]{P,\bar{\rho}}$ sends $\mf{g}[X,Y,P^{-1}]$ to itself due to property \ref{it:algebraic} and the fact that any finite dimensional representation of $\SLNC[2]$ is algebraic. Because $\bar\rho$ maps into $\Aut{\mf{g}}$ we can be sure that $\modaut[]{P,\bar{\rho}}$ is a Lie algebra isomorphism of $\mf{g}[X,Y,P^{-1}]$.
Due to the equivariance of $\modaut[]{P,\bar{\rho}}$, \ref{it:equivariant}, we can conclude that the space $\mf{g}\otimes\mb{C}[X,Y,P^{-1}]^{\brg}$ is mapped onto $\mf{g}[X,Y,P^{-1}]^{\brg}$.

To understand the claim about the grading, we recall that the right most factor of $\modaut[]{P,\bar{\rho}}$ multiplies the eigenspace of $H$ with eigenvector $k$ by $Y^{-k}$. The degree $-k$ of this result is not changed after multiplication with the remaining factors of $\modaut[]{P,\bar{\rho}}$, since these have homogeneous degree zero.
\end{proof}
One could interpret the isomorphism $\mf{g}\otimes\mb{C}[X,Y,P^{-1}]^{\brg}\cong\mf{g}[X,Y,P^{-1}]^{\brg}$ as a form of triviality of the Lie algebra $\mf{g}[X,Y,P^{-1}]^{\brg}$, as the latter seemingly complicated Lie algebra is nothing but the finite dimensional Lie algebra $\mf{g}$ with its field extended to the ring $\mb{C}[X,Y,P^{-1}]^{\brg}$. 

Theorem \ref{thm:aliahz} is the analogue in spherical geometry to \cite[Theorem 4.3]{10.1093/imrn/rnab376} in hyperbolic geometry. The latter result does not have the factor $-1$ appearing in the identification of the grading. This is in line with the duality between binary forms and modular forms described in \cite{olver2000transvectants}, where the factor $-1$ appears.

Recall that the automorphic Lie algebra is the subalgebra of $\mf{g}[X,Y,P^{-1}]^{\brg}$ consisting of the elements of degree zero. By Theorem \ref{thm:aliahz} this equals 
\[
    \modaut[]{P,\bar{\rho}}\left(\bigoplus_{k}\mf{g}_{k}\otimes \mb{C}[X,Y,P^{-1}]^{\brg}_k\right).
\] 
From Theorem \ref{thm:invariant forms} we learned that $\mb{C}[X,Y,P^{-1}]^{\brg}_k=\mb{C}[X,Y,P^{-1}]^{\brg}_0 P_k$. We summarise in the following theorem, dropping the tensor notation for clarity.
\begin{Theorem}
    \label{thm:alia}
    The automorphic Lie algebra $\mf{g}[X,Y,P^{-1}]^{\brg}_0$ defined by the representation $\bar\rho:\SLNC[2]\to\Aut{\mf g}$ and the relative invariant form $P$ of the group $\brg\subset \SU(2)$ is generated by \[\bigoplus_k P_k\modaut[]{P,\bar{\rho}}(\mf g_{k})\]
    as a module over the invariant functions $\mb{C}[X,Y,P^{-1}]^{\brg}_0$. The subscript of $\mf g_{k}$ refers to the grading of $\mf g$ as $\SLNC[2]$-module, i.e. the $H$-eigenvalue, and $P_k$ is the form defined in (\ref{eq:P_k}).
\end{Theorem}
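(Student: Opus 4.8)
The plan is to deduce Theorem~\ref{thm:alia} directly from Theorem~\ref{thm:aliahz} and Theorem~\ref{thm:invariant forms}; once the grading bookkeeping is made explicit, the argument is short.

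First I would pass to the degree-zero part on both sides of the isomorphism in Theorem~\ref{thm:aliahz}. On the source $\mf g\otimes\mb C[X,Y,P^{-1}]^{\brg}$ the grading is the tensor-product grading, with the $H$-eigenspace $\mf g_k$ of eigenvalue $k$ placed in degree $-k$ and the functions $\mb C[X,Y,P^{-1}]^{\brg}_m$ in degree $m$; hence its degree-zero summand is $\bigoplus_k\mf g_k\otimes\mb C[X,Y,P^{-1}]^{\brg}_k$. Since $\modaut[]{P,\bar\rho}$ is a grading-preserving isomorphism onto $\mf g[X,Y,P^{-1}]^{\brg}$, it restricts to an isomorphism of this summand onto the degree-zero part of the target, which is by definition the automorphic Lie algebra $\mf g[X,Y,P^{-1}]^{\brg}_0$.

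Next I would feed in Theorem~\ref{thm:invariant forms}, which says $\mb C[X,Y,P^{-1}]^{\brg}_k=\mb C[X,Y,P^{-1}]^{\brg}_0\,P_k$ (and is $\{0\}$ when $k$ is odd, in which case $\mf g_k$ contributes nothing anyway). Therefore $\bigoplus_k\mf g_k\otimes\mb C[X,Y,P^{-1}]^{\brg}_k$ is, as a module over $\mb C[X,Y,P^{-1}]^{\brg}_0$, generated by the set $\{P_k\otimes v:k\in\mb Z,\ v\in\mf g_k\}$. To push this generating set through the intertwiner I only need that $\modaut[]{P,\bar\rho}$ is linear over $\mb C[X,Y,P^{-1}]$, in particular over $\mb C[X,Y,P^{-1}]^{\brg}_0$: this is visible from the factorisation~(\ref{eq:modautlie}), where each of the three exponentials acts only on the $\mf g$-tensor-factor by multiplication with a scalar rational function, and it is in any case part of the assertion of Theorem~\ref{thm:aliahz} that $\modaut[]{P,\bar\rho}$ is an isomorphism of graded $\mb C[X,Y,P^{-1}]^{\brg}$-modules. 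Hence $\modaut[]{P,\bar\rho}$ carries a generating set to a generating set, with $\modaut[]{P,\bar\rho}(P_k\otimes v)=P_k\,\modaut[]{P,\bar\rho}(v)$; dropping tensor notation as in the statement, the automorphic Lie algebra is generated over $\mb C[X,Y,P^{-1}]^{\brg}_0$ by $\bigoplus_k P_k\,\modaut[]{P,\bar\rho}(\mf g_k)$, with $P_k$ the form~(\ref{eq:P_k}).

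I do not anticipate a genuine obstacle, since the statement merely repackages Theorems~\ref{thm:aliahz} and~\ref{thm:invariant forms}. The only points demanding a little care are the sign in the identification of the grading — the weight-$k$ space of $\mf g$ sits in degree $-k$, so that multiplication by $P_k$ of degree $k$ restores degree zero — and recording the $\mb C[X,Y,P^{-1}]$-linearity of $\modaut[]{P,\bar\rho}$, both noted above.
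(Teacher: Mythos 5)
Your proposal is correct and follows exactly the route the paper takes: restrict the graded isomorphism of Theorem~\ref{thm:aliahz} to degree zero, identify the degree-zero summand of the source as $\bigoplus_k\mf g_k\otimes\mb C[X,Y,P^{-1}]^{\brg}_k$, and then substitute $\mb C[X,Y,P^{-1}]^{\brg}_k=\mb C[X,Y,P^{-1}]^{\brg}_0\,P_k$ from Theorem~\ref{thm:invariant forms}. Your extra remarks on the sign in the grading and on $\mb C[X,Y,P^{-1}]$-linearity of $\modaut[]{P,\bar\rho}$ are exactly the bookkeeping the paper leaves implicit, so there is nothing to add.
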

The key to the uniform construction of automorphic Lie algebras in this section is the intertwiner $\modaut[]{P,1}$. 
Of course, any matrix satisfying the properties (\ref{it:det1}, \ref{it:algebraic}, \ref{it:equivariant}, \ref{it:homogeneous}) can be used in place of $\modaut[]{P,1}$. Property \ref{it:algebraic} can also be relaxed: one only needs that $\modaut[]{P,\bar{\rho}}$ sends $\mf{g}[X,Y,P^{-1}]$ to itself. The first time this relaxation proved essential is in the PhD thesis of Oelen \cite{oelen2022automorphic} 
and the paper \cite{knibbeler2024classification} based on this work, 
where an intertwiner is constructed using square roots. After applying $\bar{\rho}$ to this intertwiner, only algebraic functions remain. Property \ref{it:homogeneous} can be relaxed as well: the columns of $\modaut[]{P,1}$ can be homogeneous of degree $-m$ and $m$, but this factor $m$ then appears in the identification of the gradings in Theorem \ref{thm:aliahz}, and consequently at several places in the next section (the definition of $\bar{a}_\alpha$ and $\omega^2$ to be explicit).

\subsection{Chevalley normal forms}
It is well known that any semisimple element in a simple Lie algebra is contained in a Cartan subalgebra. Thus, for any embedding of $\SLNC[2]$ in the automorphism group of $\mf{g}$ we may choose a Cartan subalgebra $\mf{h}$ of $\mf{g}$ such that $H$ is in $\ad(\mf{h})$. A Chevalley basis relative to such a Cartan subalgebra consists of $H$ eigenvectors with integral eigenvalues. More precisely, it is a basis $\{H_i, A_\alpha, i=1,\ldots,N, \alpha\in\roots\}$ with brackets
\[
\begin{array}{rll}
{[}H_i, H_j]&\backspace=0,&
\\{[}H_i, A_\alpha]&\backspace=\alpha(H_i) A_\alpha,&
\\{[}A_\alpha, A_{-\alpha}]&\backspace=H_\alpha,
\\{[}A_\alpha,A_\beta]&\backspace=\epsilon(\alpha,\beta)A_{\alpha+\beta},& \alpha+\beta\in\roots,
\\{[}A_\alpha,A_\beta]&\backspace=0,& \alpha+\beta\notin\roots\cup\{0\},
\end{array} 
\]
The element $H_\alpha$ of the Cartan subalgebra is dual to $\frac{2\alpha}{(\alpha,\alpha)}$, and often denoted $\alpha^\vee$.\footnote{For the simply laced Lie types $ADE$, $H_\alpha$ depends linearly on $\alpha$ and a simpler expression of $H_\alpha$ is possible, which has been erroneously used for all Lie types by the author in \cite{knibbeler2020cohomology} and \cite{10.1093/imrn/rnab376}.} The map $\epsilon$ satisfies nice algebraic properties (see for instance \cite[Section 25]{humphreys1972introduction} or \cite[Chapter VIII, Section 2, Number 4, Definition 3]{bourbaki2005lie}).

If $k(\alpha)$ is the $H$-eigenvalue of $A_\alpha$ then $k$ is an additive map on the root system $\roots$. 
In fact, the classification of $\slnc[2]$-triples in $\mf{g}$ is listed in \cite{collingwood1993nilpotent} using Dynkin diagrams with labels $0$, $1$ and $2$. These are known as Dynkin gradings, and one can find more wonderful facts about these gradings in the recent work of Elashvili, Jibladze and Kac \cite{elashvili2019on}. The map $k$ is the additive extension of the Dynkin grading to the whole root system.

By Theorem \ref{thm:aliahz}, the elements
\begin{align*}
    &h_i(X,Y)=\modaut{P,\bar{\rho}} H_i\otimes 1,\quad i=1,\ldots,N,\\
    &a_\alpha(X,Y)=\modaut{P,\bar{\rho}} A_\alpha\otimes 1, \quad \alpha\in\roots,
\end{align*}
form a basis of $\mf{g}[X,Y,P^{-1}]^{\brg}$, as a free module of $\mb{C}[X,Y,P^{-1}]^{\brg}$, with identical structure constants as $\{H_i, A_\alpha\}$. 
By Theorem \ref{thm:alia}, the nonzero elements
\[h_i,\quad \ba_\alpha=P_{k(\alpha)} a_\alpha\]
for $i=1,\ldots,N$ and $\alpha\in\roots$
form a basis of $\mf{g}[X,Y,P^{-1}]^{\brg}_0$ as free $\mb{C}[\mb{I}]$-module (here we recall that $P_k=0$ when $k$ is odd, but we are mostly interested in the case where $k(\alpha)$ is even for all roots $\alpha$). 

To describe the structure constants of this basis we will use the language of root cohomology introduced in \cite{knibbeler2020cohomology}. 
Recall that the functions $P_k$ in Theorem \ref{thm:invariant forms} are
formulated in terms of the map $\ex:2\mb{Z}\to\prod\{0,1,\ldots,\nu_i-1\}$. We need to divide component $i$ by $\nu_i$. Let $\frac{1}{\nu}$ denote the map doing exactly this.
Now we define 
\beq{eq:2cocycles}
\omega^2=\mathsf{d}\left(\frac{1}{\nu}\circ\ex\circ k\right)
\eeq
where $\mathsf{d}$ is the coboundary operator defined on the functions $f$ on the root system by \[(\mathsf{d}f)(\alpha,\beta)=f(\beta)-f(\alpha+\beta)+f(\alpha).\] 
The $2$-cocycle (\ref{eq:2cocycles}) takes values in $\{0,1\}^{|\Omega|}$, except when $\rg=\cg{2m}$, in which case each value is $(0,0)$ or $(1/2,1/2)$.
We have proved the following
\begin{Theorem}[Chevalley normal forms]
\label{thm:aliam}
The automorphic Lie algebra 
\[\mf{g}[X,Y,P^{-1}]^{\brg}_0\] 
is generated as a free $\mb{C}[\mb{I}]$-module by elements $h_i(X,Y)$ and $\ba_\alpha(X,Y)$ where $i=1,\ldots,N$, and $\alpha\in \roots$. The $\mb{C}[\mb{I}]$-linear Lie structure is given by the brackets
\[
\begin{array}{rll}
{[}h_i, h_j]&\backspace=0,&
\\{[}h_i, \ba_\alpha]&\backspace=\alpha(H_i) \ba_\alpha,&
\\{[}\ba_\alpha, \ba_{-\alpha}]&\backspace=\mb{I}^{\omega^2(\alpha,-\alpha)}h_\alpha,&
\\{[}\ba_\alpha,\ba_\beta]&\backspace=\epsilon(\alpha,\beta)\mb{I}^{\omega^2(\alpha,\beta)} \ba_{\alpha+\beta},& \alpha+\beta\in\roots,
\\{[}\ba_\alpha,\ba_\beta]&\backspace=0,& \alpha+\beta\notin\roots\cup\{0\},
\end{array} 
\]
where $\omega^2$ is the symmetric $2$-cocycles (\ref{eq:2cocycles}) on the root system $\roots$, and $\mb{I}^{\omega^2(\alpha,-\alpha)}$ denotes the product $\prod_{i\in\Omega}\mb{I}_i^{\omega^2(\alpha,-\alpha)_i}$ with $\mb{I}_i=P_i^{\nu_i} P^{-1}$.
\end{Theorem}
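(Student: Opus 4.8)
The plan is to derive Theorem \ref{thm:aliam} by assembling Theorems \ref{thm:aliahz}, \ref{thm:invariant forms} and \ref{thm:alia}; the only genuine input beyond these is one identity between the forms $P_k$. First I would fix the setup: choose a Chevalley basis $\{H_i,A_\alpha\}$ of $\mf g$ compatible with the $H$-grading, so that $A_\alpha\in\mf g_{k(\alpha)}$ and $H_i,H_\alpha\in\mf g_0$, and put $h_i=\modaut[]{P,\bar\rho}(H_i\otimes 1)$, $a_\alpha=\modaut[]{P,\bar\rho}(A_\alpha\otimes 1)$. By Theorem \ref{thm:aliahz} the operator $\modaut[]{P,\bar\rho}$ is a Lie algebra isomorphism over $\mb C[X,Y,P^{-1}]^{\brg}$, so $\{h_i,a_\alpha\}$ satisfies precisely the Chevalley relations of $\{H_i,A_\alpha\}$ with coefficients in that ring. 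Theorem \ref{thm:alia} (which repackages \ref{thm:aliahz} together with \ref{thm:invariant forms}) identifies the automorphic Lie algebra with the $\mb C[\mb I]$-module $\bigoplus_k P_k\,\modaut[]{P,\bar\rho}(\mf g_k)$; since $\mb C[X,Y,P^{-1}]^{\brg}_k=\mb C[\mb I]\,P_k$ is free of rank one over $\mb C[\mb I]$ whenever $P_k\ne 0$ and $\modaut[]{P,\bar\rho}$ is a module isomorphism, the nonzero elements among $h_i$ and $\ba_\alpha=P_{k(\alpha)}a_\alpha$ form a free $\mb C[\mb I]$-basis. This gives the first assertion of the theorem.

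Next I would read off the brackets from $\mb C[X,Y,P^{-1}]^{\brg}$-bilinearity and the Chevalley relations for $\{h_i,a_\alpha\}$. One gets immediately $[h_i,h_j]=0$, $[h_i,\ba_\alpha]=P_{k(\alpha)}[h_i,a_\alpha]=\alpha(H_i)\ba_\alpha$, and $[\ba_\alpha,\ba_\beta]=P_{k(\alpha)}P_{k(\beta)}[a_\alpha,a_\beta]$, which vanishes when $\alpha+\beta\notin\roots\cup\{0\}$. When $\alpha+\beta\in\roots$, additivity of the Dynkin grading gives $k(\alpha+\beta)=k(\alpha)+k(\beta)$, hence $a_{\alpha+\beta}=P_{k(\alpha+\beta)}^{-1}\ba_{\alpha+\beta}$ and $[\ba_\alpha,\ba_\beta]=\epsilon(\alpha,\beta)\,(P_{k(\alpha)}P_{k(\beta)}/P_{k(\alpha+\beta)})\,\ba_{\alpha+\beta}$; with $\beta=-\alpha$, using $k(-\alpha)=-k(\alpha)$ and $P_0=1$, one finds $[\ba_\alpha,\ba_{-\alpha}]=(P_{k(\alpha)}P_{k(-\alpha)}/P_0)\,h_\alpha$ with $h_\alpha=\modaut[]{P,\bar\rho}(H_\alpha\otimes 1)$ rewritten through the $h_i$. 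So everything reduces to the identity $P_{k(\alpha)}P_{k(\beta)}/P_{k(\alpha)+k(\beta)}=\mb I^{\omega^2(\alpha,\beta)}$.

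This identity is the core of the proof. Writing $f_i(\alpha)=\tfrac1{\nu_i}\,\ex(k(\alpha))_i$, so that by (\ref{eq:2cocycles}) one has $\omega^2(\alpha,\beta)_i=f_i(\alpha)+f_i(\beta)-f_i(\alpha+\beta)$, and substituting $P_k=P^{\ell(k)}\prod_iP_i^{\ex(k)_i}$ from (\ref{eq:P_k}) with $\ell(k)=(k-\sum_i\ex(k)_i d_i)/|\rg|$, I would use three observations. First, by the definition of $f_i$ the exponent of $P_i$ in the ratio is $\ex(k(\alpha))_i+\ex(k(\beta))_i-\ex(k(\alpha)+k(\beta))_i=\nu_i\,\omega^2(\alpha,\beta)_i$, a nonnegative integer multiple of $\nu_i$. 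Second, using additivity of $k$ and $d_i\nu_i=|\rg|$, the exponent of $P$ in the ratio is $\ell(k(\alpha))+\ell(k(\beta))-\ell(k(\alpha)+k(\beta))=-\tfrac1{|\rg|}\sum_i d_i\nu_i\,\omega^2(\alpha,\beta)_i=-\sum_i\omega^2(\alpha,\beta)_i$. Third, with $\mb I_i=P_i^{\nu_i}P^{-1}$ one has $\prod_iP_i^{\nu_i\omega^2(\alpha,\beta)_i}=P^{\sum_i\omega^2(\alpha,\beta)_i}\prod_i\mb I_i^{\omega^2(\alpha,\beta)_i}$, so the powers of $P$ cancel and the ratio equals $\prod_i\mb I_i^{\omega^2(\alpha,\beta)_i}=\mb I^{\omega^2(\alpha,\beta)}$. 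Plugging this back into the brackets above yields the stated Chevalley normal form.

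The point that demands the most care is the cyclic case $\rg=\cg{2m}$. There the map $\bar\ex$ of (\ref{eq:definition ex}) is only a section of $\odeg|_{N_\ch}$ and not an isomorphism, the cocycle $\omega^2$ can take the half-integral value $(1/2,1/2)$, and the individual symbols $\mb I_i^{1/2}$ do not stand for elements of $\mb C[X,Y,P^{-1}]$. One must then verify that the product $\mb I^{\omega^2(\alpha,\beta)}=\prod_i\mb I_i^{\omega^2(\alpha,\beta)_i}$ is still a genuine element of $\mb C[\mb I]$ and still equals the corresponding ratio $P_{k(\alpha)}P_{k(\beta)}/P_{k(\alpha+\beta)}$; here the explicit values of $\ex$ in Table \ref{tab:exponents} and the invariance of each $P_k$ from Theorem \ref{thm:invariant forms} are what is used. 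Outside this case the argument is the formal consequence of the three cited theorems, the additivity of $k$, and the definition (\ref{eq:2cocycles}).
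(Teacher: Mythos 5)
Your proof is correct and follows essentially the same route as the paper, which derives Theorem \ref{thm:aliam} directly from Theorems \ref{thm:aliahz}, \ref{thm:invariant forms} and \ref{thm:alia}. The only difference is that you spell out explicitly the identity $P_{k(\alpha)}P_{k(\beta)}/P_{k(\alpha)+k(\beta)}=\mb I^{\omega^2(\alpha,\beta)}$ (and the care needed for $\rg=\cg{2m}$), which the paper leaves implicit in the passage "We have proved the following"; this is a welcome clarification rather than a deviation.
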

In analogy with the Chevalley basis for the simple Lie algebra described above, we use the term Chevalley normal forms for elements $h_i$, $i=1,\ldots,N$, $\ba_\alpha$, $\alpha\in\roots$, generating an automorphic Lie algebra as a module over the automorphic functions and satisfying the bracket relations of Theorem \ref{thm:aliam} (following the terminology of \cite{knibbeler2017higher}).\footnote{Bourbaki defines the notion of a Chevalley system in \cite[Chapter VIII, Section 2, Number 4, Definition 3]{bourbaki2005lie}. One of its defining features is that the linear extension of $H_\alpha\mapsto -H_\alpha$, $A_\alpha\mapsto A_{-\alpha}$ is an automorphism. This property is lost in the generalised setting of Theorem \ref{thm:aliam}, because $\omega^2(\alpha,\beta)$ is not generally equal to $\omega^2(-\alpha,-\beta)$.}
The normal forms introduced in \cite{lombardo2010on} and also obtained in \cite{knibbeler2017higher, knibbeler2024classification} correspond to Chevalley normal forms.

It is worth noting that the reductive Lie algebra $\mf g ^{\rg_i}$, where $\rg_i$ is the subgroup of $\rg$ stabilising an element of the exceptional orbit $S_i$, has a root system corresponding to the kernel of $\ex_i \circ k$. This allows us to find the derived series and lower central series of the automorphic Lie algebra.
\begin{Corollary}
    \label{cor:abelianisation}
The abelianisation of an automorphic Lie algebra 
\[\mf{A}=\mf{g}[X,Y,P^{-1}]^{\brg}_0\] 
with factorisable action on a simple Lie algebra $\mf g$ has dimension 
\[
    \dim_{\mb C} \mf{A}/[\mf{A}, \mf{A}]=
    \sum_{i\in\Omega^*}\dim_{\mb C}\mc Z \left(\mf g ^{\rg_i}\right)= 
    |\Omega^*|\rank(\mf g)-\sum_{i\in\Omega^*}\rank [\mf g ^{\rg_i},\mf g ^{\rg_i}]
\]
where $\mc Z \left(\mf g ^{\rg_i}\right)$ denotes the centre of $\mf g ^{\rg_i}$, and $\Omega^*=\{i\in\Omega\,|\,\mb I_i\ne 1\}$. 
\end{Corollary}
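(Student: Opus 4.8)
The plan is to read the derived subalgebra $[\mf A,\mf A]$ off the Chevalley normal form brackets of Theorem~\ref{thm:aliam}, which identifies the abelianisation with an explicit torsion module over the polynomial ring $\mb C[\mb I]$, and then to compute the dimension of that module by localising at the finitely many points where the coordinates $\mb I_i$ vanish.

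First I would determine $[\mf A,\mf A]$. Since the Lie bracket on $\mf A$ is $\mb C[\mb I]$-bilinear, $[\mf A,\mf A]$ is the $\mb C[\mb I]$-submodule generated by the brackets of the generators $h_i$ and $\ba_\alpha$. Because $\mf g$ is simple, for each root $\alpha$ there is an index $i$ with $\alpha(H_i)\neq 0$, so $\ba_\alpha=\alpha(H_i)^{-1}[h_i,\ba_\alpha]\in[\mf A,\mf A]$ and hence $\bigoplus_{\alpha\in\roots}\mb C[\mb I]\ba_\alpha\subseteq[\mf A,\mf A]$. The brackets $[\ba_\alpha,\ba_\beta]$ with $\alpha+\beta$ a root then add nothing new, while the brackets $[\ba_\alpha,\ba_{-\alpha}]=\mb I^{\omega^2(\alpha,-\alpha)}h_\alpha$ contribute the $\mb C[\mb I]$-submodule $M:=\sum_{\alpha\in\roots}\mb C[\mb I]\,\mb I^{\omega^2(\alpha,-\alpha)}h_\alpha$ of $\mf h\otimes\mb C[\mb I]$. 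Using $\mf A=\bigl(\bigoplus_\alpha\mb C[\mb I]\ba_\alpha\bigr)\oplus(\mf h\otimes\mb C[\mb I])$ this gives $\mf A/[\mf A,\mf A]\cong(\mf h\otimes\mb C[\mb I])/M$.

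Next I would analyse this quotient over the PID $\mb C[\mb I]$. For $i\notin\Omega^*$ we have $\mb I_i=1$; for $i\in\Omega^*$ the invariant $\mb I_i=P_i^{\nu_i}P^{-1}$ is an affine-linear function of the Hauptmodul $\mb I$, vanishing at a point $p_i\in\mb C$, and the $p_i$ are pairwise distinct (they are the images in $\mb C$ of the distinct exceptional orbits $S_i$). The coroots $H_\alpha$ span $\mf h$, so $M$ has full rank and $(\mf h\otimes\mb C[\mb I])/M$ has finite length; being a finite-length $\mb C[\mb I]$-module it is the direct sum of its localisations at the maximal ideals $(\mb I-p_i)$, and at any other maximal ideal every $\mb I_i$ is a unit, whence $M$ contains every $H_\alpha$ there and the localisation vanishes. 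Fixing $i\in\Omega^*$ and passing to the DVR $R_i=\mb C[\mb I]_{(\mb I-p_i)}$ with uniformiser $t=\mb I-p_i$, we have that $\mb I_i$ is $t$ times a unit, every $\mb I_j$ with $j\neq i$ is a unit, and $\omega^2(\alpha,-\alpha)_i=0$ exactly when $\alpha$ lies in the root system $\ker(\ex_i\circ k)$ of the reductive Lie algebra $\mf g^{\rg_i}$ recalled before the statement. Hence the generator $\mb I^{\omega^2(\alpha,-\alpha)}h_\alpha$ of $M$ localises to a unit times $H_\alpha$ when $\alpha\in\ker(\ex_i\circ k)$ and to a unit times $tH_\alpha$ otherwise, so $M_{(\mb I-p_i)}=(V_i\otimes R_i)+t\,(W_i\otimes R_i)$ with $V_i=\mathrm{span}_{\mb C}\{H_\alpha:\alpha\in\ker(\ex_i\circ k)\}$ and $W_i=\mathrm{span}_{\mb C}\{H_\alpha:\alpha\notin\ker(\ex_i\circ k)\}$. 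Since $V_i$ is the span of the coroots of the semisimple part of $\mf g^{\rg_i}$ we have $\mf h=V_i\oplus\mc Z(\mf g^{\rg_i})$ with $\mc Z(\mf g^{\rg_i})\subseteq\mf h$, and $V_i+W_i=\mf h$; a short check then shows $M_{(\mb I-p_i)}=(V_i\otimes R_i)\oplus t\,(\mc Z(\mf g^{\rg_i})\otimes R_i)$, so $(\mf h\otimes R_i)/M_{(\mb I-p_i)}\cong\mc Z(\mf g^{\rg_i})\otimes(R_i/tR_i)\cong\mc Z(\mf g^{\rg_i})$. Summing over $i\in\Omega^*$ yields $\dim_{\mb C}\mf A/[\mf A,\mf A]=\sum_{i\in\Omega^*}\dim_{\mb C}\mc Z(\mf g^{\rg_i})$, and the final equality follows because $\mf g^{\rg_i}$ is reductive with Cartan subalgebra $\mf h$, so $\rank\mf g=\dim_{\mb C}\mc Z(\mf g^{\rg_i})+\rank[\mf g^{\rg_i},\mf g^{\rg_i}]$.

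The step I expect to be the main obstacle is the last piece of bookkeeping in the localised computation: showing that $M_{(\mb I-p_i)}$ is \emph{exactly} $(V_i\otimes R_i)\oplus t\,(\mc Z(\mf g^{\rg_i})\otimes R_i)$, which rests on the fact that the coroots $H_\alpha$ with $\alpha\notin\ker(\ex_i\circ k)$ span $\mf h$ modulo $V_i$, together with the verification that the $\mb I_i$ for $i\in\Omega^*$ are genuinely distinct linear factors. The cyclic groups $\rg=\cg{2m}$, where $\omega^2$ takes half-integer values, require only a cosmetic change (replacing the relevant $\mb I_i$ by a suitable square root, or treating these small cases directly); otherwise the argument is formal once Theorem~\ref{thm:aliam} and the identification of the root system of $\mf g^{\rg_i}$ with $\ker(\ex_i\circ k)$ are available.
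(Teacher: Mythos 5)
Your proposal is correct and follows essentially the same route as the paper's proof: read $[\mf A,\mf A]$ off the brackets of Theorem \ref{thm:aliam}, reduce to the quotient of $\mf h\otimes\mb C[\mb I]$ by the submodule generated by the elements $\mb I^{\omega^2(\alpha,-\alpha)}h_\alpha$, and use the identification of $\{\alpha\,|\,\omega^2(\alpha,-\alpha)_i=0\}$ with the root system of $\mf g^{\rg_i}$ to count the contribution $\dim_{\mb C}\mc Z(\mf g^{\rg_i})$ at each $i\in\Omega^*$. The only difference is one of presentation: you make explicit the localisation at the distinct points where the $\mb I_i$ vanish, a bookkeeping step the paper compresses into the remark that the remaining generators are divisible by $\mb I_i$ but not $\mb I_i^2$.
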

\begin{proof}
    Theorem \ref{thm:aliam} shows that $[\mf{A}, \mf{A}]$ is generated by the elements $\mb{I}^{\omega^2(\alpha,-\alpha)}h_\alpha$ and $\ba_\alpha$ with $\alpha\in\roots$. We pick one $i\in\Omega^*$ and investigate $\omega^2(\alpha,-\alpha)_i$. The roots $\alpha$ for which $\omega^2(\alpha,-\alpha)_i=0$ correspond to the roots of the reductive Lie algebra $\mf g ^{\rg_i}$. The dimension of the linear span of $\{h_\alpha\,|\, \omega^2(\alpha,-\alpha)_i=0\}$ corresponds to the rank of the semisimple summand $[\mf g ^{\rg_i},\mf g ^{\rg_i}]$ of $\mf g ^{\rg_i}$. This leaves $\rank(\mf g)- \rank [\mf g ^{\rg_i},\mf g ^{\rg_i}]=\dim_{\mb C}\mc Z \left(\mf g ^{\rg_i}\right)$ generators $\mb{I}^{\omega^2(\alpha,-\alpha)}h_\alpha$ of $[\mf{A}, \mf{A}]$ which are divisible by $\mb I_i$ (but not by $\mb I_i^2$, since $\omega^2_i$ only has values $0$ and $1$). 
    If we repeat this argument for all $i\in\Omega^*$ we find the displayed formula.
\end{proof}

\begin{Corollary}
    \label{cor:isomorphic alias}
    An automorphic Lie algebra with factorisable action depends only on the base Lie algebra and the set of $\min\{\nu,k(\tilde{\alpha})/2+1\}$ when $\nu$ runs over all orders of orbifold points of the quotient map $\overline{\mb C}\to\overline{\mb C}/\rg$ in the holomorphic domain of the automorphic Lie algebra, and $\tilde{\alpha}$ is the highest root of $\mf g$.
\end{Corollary}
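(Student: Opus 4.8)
The plan is to extract, from Theorem \ref{thm:aliam}, the complete list of data that determines an automorphic Lie algebra with factorisable action up to isomorphism, and then to simplify that list. By Theorem \ref{thm:aliam} the isomorphism type depends only on $\mf g$ (equivalently the root system $\roots$ with its Chevalley data $\epsilon$, which are canonical) and on the $2$-cocycle $\omega^2$ together with which of the invariant functions $\mb I_i$ are nonconstant (the set $\Omega^*$). Indeed $\omega^2$ controls every bracket that is not already present in $\mf g$, and $\omega^2$ is, by definition \eqref{eq:2cocycles}, the coboundary of $\tfrac1\nu\circ\ex\circ k$. So the first step is to argue that $\omega^2$, hence the algebra, is determined by the composite $\tfrac1\nu\circ\ex\circ k$ up to adding a coboundary of a function on $\roots$; but adding such a coboundary corresponds precisely to rescaling the generators $\ba_\alpha$ by powers of the $\mb I_i$, hence to a $\mb C[\mb I]$-module automorphism, and so does not change the isomorphism type.

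Second I would analyse the function $\tfrac1\nu\circ\ex\circ k$ componentwise. By construction of $\ex$ in \eqref{eq:definition ex}, its $i$-th component sends an even integer $m$ to $\res_i(m/2)$, the residue of $m/2$ modulo $\nu_i$; dividing by $\nu_i$ gives $\res_i(m/2)/\nu_i$. Composing with the additive map $k$ on $\roots$, the $i$-th component of our function is $\alpha\mapsto \res_i(k(\alpha)/2)/\nu_i$. The coboundary of this — and hence the $i$-th component of $\omega^2$ — depends only on how the integer-valued additive function $k/2$ on $\roots$ interacts with the single modulus $\nu_i$; concretely, $\omega^2(\alpha,\beta)_i\in\{0,1\}$ records whether the sum $\res_i(k(\alpha)/2)+\res_i(k(\beta)/2)$ reaches or exceeds $\nu_i$. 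The key observation is that since $k$ is the additive extension of the Dynkin grading, its values on $\roots$ are bounded: the largest value of $k(\alpha)/2$ on the whole root system is $k(\tilde\alpha)/2$, attained at the highest root. Therefore the modulus $\nu_i$ only matters up to the threshold $k(\tilde\alpha)/2+1$: if $\nu_i\ge k(\tilde\alpha)/2+1$ then every residue $\res_i(k(\alpha)/2)$ equals $k(\alpha)/2$ itself and $\omega^2(\cdot,\cdot)_i\equiv 0$, so the component contributes nothing; and if $\nu_i<k(\tilde\alpha)/2+1$, reducing $\nu_i$ or not changes the function only through the value $\min\{\nu_i,\,k(\tilde\alpha)/2+1\}$. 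This gives the claimed dependence on the multiset of values $\min\{\nu,\,k(\tilde\alpha)/2+1\}$.

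Third, I would match "orders $\nu_i$ of the exceptional orbits $S_i$" with "orders $\nu$ of orbifold points of $\overline{\mb C}\to\overline{\mb C}/\rg$ in the holomorphic domain": the holomorphic domain of the automorphic Lie algebra is $\overline{\mb C}$ minus the orbit $S$ on which $P$ vanishes, so the relevant orbifold points are exactly the $S_i$ with $\mb I_i\ne 1$, i.e. $i\in\Omega^*$ — which is also precisely the set of components on which $\omega^2$ can be nonzero, since for $i\notin\Omega^*$ the function $\mb I_i$ is constant and the bracket $[\ba_\alpha,\ba_{-\alpha}]=\mb I^{\omega^2(\alpha,-\alpha)}h_\alpha$ is unaffected. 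Finally I would note that $\Omega^*$ itself is recovered from the multiset $\{\min\{\nu,\,k(\tilde\alpha)/2+1\}\}$ together with knowledge that the other data of $\mf g$ fixes $k(\tilde\alpha)$, because the automorphic functions $\mb I_i$ with $i\in\Omega^*$ are free generators detected by the module structure; but strictly speaking, a cleaner route is to observe directly that two automorphic Lie algebras whose associated multisets coincide have, after the rescaling automorphism of the first paragraph, literally the same bracket tables in Theorem \ref{thm:aliam}, hence are isomorphic.

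The main obstacle, I expect, is the bookkeeping in the first paragraph: making precise that changing the lift of $\omega^2$ by a coboundary, and changing the presentation of $\mb C[\mb I]$ (the choice of $\mb I$ among the $\mb I_i$), really does amount to a Lie algebra isomorphism and not merely a change of generators that could a priori alter structure constants in a subtle way — i.e. verifying that the rescaling $\ba_\alpha\mapsto \mb I^{c(\alpha)}\ba_\alpha$ is compatible with all five families of brackets simultaneously. This is essentially the cohomological content of root cohomology from \cite{knibbeler2020cohomology} and should go through cleanly, but it is where the argument needs care.
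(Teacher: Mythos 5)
Your overall route is the paper's: reduce everything to the $2$-cocycle via Theorem \ref{thm:aliam}, show each component of the cocycle depends only on $\min\{\nu_i,k(\tilde\alpha)/2+1\}$, and discard the components with $\mb I_i$ constant. However, the key computational step in your second paragraph is wrong as stated. You claim that if $\nu_i\ge k(\tilde\alpha)/2+1$ then every residue $\res_i(k(\alpha)/2)$ equals $k(\alpha)/2$ and hence $\omega^2(\cdot,\cdot)_i\equiv 0$. This fails for negative roots: $k$ takes values in $[-k(\tilde\alpha),k(\tilde\alpha)]$, and for $k(\alpha)<0$ one has $\res_i(k(\alpha)/2)=\nu_i+k(\alpha)/2\ne k(\alpha)/2$. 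By your own (correct) carrying criterion, $\omega^2(\alpha,-\alpha)_i=1$ whenever $k(\alpha)\not\equiv 0 \pmod{2\nu_i}$, since then $\res_i(k(\alpha)/2)+\res_i(-k(\alpha)/2)=\nu_i$. So the component does \emph{not} vanish for large $\nu_i$; it merely \emph{stabilises}, i.e.\ becomes independent of $\nu_i$ once $\nu_i>k(\tilde\alpha)/2$ (one checks the carry condition separately according to the signs of $k(\alpha)$, $k(\beta)$, $k(\alpha+\beta)$). Were your claim true, every orbifold point of order $\ge k(\tilde\alpha)/2+1$ would be invisible to the algebra, contradicting, e.g., the known nontriviality of the $\slnc[2]$ cases where $[\ba_\alpha,\ba_{-\alpha}]=\mb I^{\omega^2(\alpha,-\alpha)}h_\alpha$ carries a genuine zero of $\mb I$ at every orbifold point in the holomorphic domain. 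The conclusion of the corollary survives, but only after replacing ``vanishes'' by ``is the same for all $\nu_i>k(\tilde\alpha)/2$''.

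A secondary point: the coboundary/rescaling machinery in your first paragraph is not needed here, because for two groups with the same value of $\min\{\nu_i,k(\tilde\alpha)/2+1\}$ the cocycle components are literally equal, not merely cohomologous; the paper's proof uses only this. Moreover the map $\ba_\alpha\mapsto\mb I^{c(\alpha)}\ba_\alpha$ with $c(\alpha)>0$ is injective but not surjective on the free $\mb C[\mb I]$-module, so calling it a module automorphism needs justification (this is exactly the integration-over-$\mb Z$ subtlety of Section \ref{sec:integrals of 2-forms in normal form}); since the direct comparison of cocycles suffices, you can simply drop that step.
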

\begin{proof}
    The $2$-cocycle (\ref{eq:2cocycles}) depends only on $\min\{\nu,k(\tilde{\alpha})/2+1\}$ when $\nu$ runs over all orders of orbifold points of the quotient map. The automorphic Lie algebra depends only on the components $\omega^2_i$ of the $2$-cocycle for which $\mb I_i\ne 1$ (which are precisely the Hauptmoduln vanishing on the orbifold points in the holomorphic domain) due to Theorem \ref{thm:aliam}.
\end{proof}
For example, any automorphic Lie algebra based on $\slnc[2]$ has a factorisable action, and $k(\tilde{\alpha})/2+1=2$. Therefore, such automorphic Lie algebras depend only on the number of orbifold points in the holomorphic domain. This recovers one of the main results of \cite{lombardo2010on} and \cite{bury2021automorphic} in a completely different way.

For another example, the lowest order orbifold points for the symmetry groups of the Platonic solids $\tg$, $\og$ and $\yg$ are $2$ and $3$. Choosing the poles at the remaining orbifold point, we find the same automorphic Lie algebras for these three groups, for all simple Lie algebras $\mf g$.\footnote{These automorphic Lie algebras also exist on the hyperbolic plane \cite{10.1093/imrn/rnab376}.}

The above corollaries are also useful in the search for nonisomorphic automorphic Lie algebras. If one would like to have nonisomorphic algebras for symmetry groups $\tg$, $\og$ and $\yg$ for instance, it is sufficient to pick $\mf g$ such that $k(\tilde{\alpha})/2+1\ge 5$. Corollary \ref{cor:abelianisation} establishes the nonexistence of an isomorphism. For instance, one can pick the regular embedding of $\PSL(2,\mb{C})$ in $\Aut{\mf g}$ so that $k(\tilde{\alpha})/2+1$ corresponds to the Coxeter number of $\mf g$, and ensure this Coxeter number is at least $5$, which amounts to taking $\slnc[N]$, with $N\ge 5$, $\mf {so}(N,\mb C)$ with $N\ge 7$, $\mf {sp}(2N,\mb C)$ with $2N\ge 6$, or taking any of the exceptional Lie algebras. All these cases are beyond the scope of former publications on the classification of automorphic Lie algebras, and these publications therefore showed many isomorphisms.

\begin{Example}[Icosahedral group and smallest orbit of poles, Lie type $A_2$ and principal nilpotent orbit.]
    \label{ex:explicit invariant matrices}
We let the binary icosahedral group be generated by 
\begin{align*}a&=\left(\begin{array}{rr}
\zeta_{20}^{2} & 0 \\
0 & -\zeta_{20}^{6} + \zeta_{20}^{4} - \zeta_{20}^{2} + 1
\end{array}\right)
\\
b&=\left(\begin{array}{rr}
\frac{2}{5} \zeta_{20}^{6} + \frac{1}{5} \zeta_{20}^{4} + \frac{1}{5} \zeta_{20}^{2} + \frac{2}{5} & -\frac{1}{5} \zeta_{20}^{6} + \frac{2}{5} \zeta_{20}^{4} + \frac{2}{5} \zeta_{20}^{2} - \frac{1}{5} \\
\frac{4}{5} \zeta_{20}^{6} - \frac{3}{5} \zeta_{20}^{4} + \frac{2}{5} \zeta_{20}^{2} - \frac{1}{5} & -\frac{2}{5} \zeta_{20}^{6} - \frac{1}{5} \zeta_{20}^{4} - \frac{1}{5} \zeta_{20}^{2} + \frac{3}{5}
\end{array}\right)
\\
c&=\left(\begin{array}{rr}
\frac{3}{5} \zeta_{20}^{6} - \frac{1}{5} \zeta_{20}^{4} + \frac{4}{5} \zeta_{20}^{2} - \frac{2}{5} & \frac{1}{5} \zeta_{20}^{6} + \frac{3}{5} \zeta_{20}^{4} - \frac{2}{5} \zeta_{20}^{2} + \frac{1}{5} \\
\frac{1}{5} \zeta_{20}^{6} + \frac{3}{5} \zeta_{20}^{4} - \frac{2}{5} \zeta_{20}^{2} + \frac{1}{5} & -\frac{3}{5} \zeta_{20}^{6} + \frac{1}{5} \zeta_{20}^{4} - \frac{4}{5} \zeta_{20}^{2} + \frac{2}{5}
\end{array}\right)
\end{align*}
satisfying $a^5=b^3=c^2=abc=-\Id$. The entries of the matrices live in the cyclotomic field $\mb{Q}(\zeta_{20})$ where $\zeta_{20}$ is a primitive $20$th root of unity. Computer calculations appear to be faster if the matrices are defined to live in this space.

The smallest orbit on the Riemann sphere has size $12$ and corresponding invariant form \[
P=X^{11} Y - 11 \, X^{6} Y^{6} - X Y^{11}.\]
From $P$ we can construct $\modaut{P,1}$ defined by (\ref{eq:modaut1}) resulting in
\[\modaut{P,1}=\left(\begin{array}{rr}
\frac{X^{10} - 66 \, X^{5} Y^{5} - 11 \, Y^{10}}{12 \, {\left(X^{10} Y - 11 \, X^{5} Y^{6} - Y^{11}\right)}} & X \\[3mm]
-\frac{11 \, X^{10} - 66 \, X^{5} Y^{5} - Y^{10}}{12 \, {\left(X^{11} - 11 \, X^{6} Y^{5} - X Y^{10}\right)}} & Y
\end{array}\right)
\]

For this example we choose a homomorphism $\bar{\rho}:\SLNC[2]\to\Aut{\slnc[3]}$ from the conjugacy class of such homomorphisms corresponding to the principal nilpotent orbit in $\slnc[3]$. We can make such a choice concrete using the homomorphism  
\newcommand{\Sym}{\mathrm{Sym}}
\[\Sym^2:\SLNC[2]\to\SLNC[3],\quad \Sym^2 \begin{pmatrix}a&b\\c&d\end{pmatrix}=
\begin{pmatrix}a^2&ab&b^2\\2ac&ad+bc&2
    
    bd\\c^2&cd&d^2\end{pmatrix},\]
which has kernel $\pm\Id$. Then we compose with the adjoint map
\[\bar{\rho}=\Ad\circ\Sym^2.\]
This composition has kernel $\pm\Id$ as well.
Notice that $H=\ad\,\diag(2,0,-2)$ (where $H$ is as in (\ref{eq:HEF})). The kernel of $H$ is a CSA of $\slnc[3]$. This proves that $E$ and $F$ belong to the principal nilpotent orbit (the nilpotent orbit of the largest dimension) of $\slnc[3]$ \cite{collingwood1993nilpotent}.

Now we pick our favourite basis for $\slnc[3]$ that diagonalises $H$:
$$
E_0=\left(\begin{array}{rrr}
0 & 0 & 0 \\
0 & 0 & 0 \\
1 & 0 & 0
\end{array}\right), 
E_1=\left(\begin{array}{rrr}
0 & 1 & 0 \\
0 & 0 & 0 \\
0 & 0 & 0
\end{array}\right), 
E_2=\left(\begin{array}{rrr}
0 & 0 & 0 \\
0 & 0 & 1 \\
0 & 0 & 0
\end{array}\right), 
$$
\[F_0=[E_1,E_2],\quad F_1=[E_2,E_0],\quad F_2=[E_0,E_1],\quad H_i=[E_i,F_i].\]
The eigenvalues of $E_0,E_1,E_2$ are respectively $-4,2,2$.
We apply $\modaut{P, \Sym^2}$ defined in (\ref{eq:modautlie}) to these generators to get
\[
e_0=\left(\begin{array}{rrr}
    X^{2} Y^{2} & -X^{3} Y & X^{4} \\
    2 \, X Y^{3} & -2 \, X^{2} Y^{2} & 2 \, X^{3} Y \\
    Y^{4} & -X Y^{3} & X^{2} Y^{2}
    \end{array}\right)
=
\left(\begin{array}{r}
    X^{2} \\
    2 \, X Y \\
    Y^{2}
    \end{array}
\right)
\left(\begin{array}{rrr}
    Y^{2} & -X Y & X^{2}
    \end{array}
\right)
\]
{\footnotesize
\begin{multline*}
    e_1=
    \left(
        \begin{array}{r}
            \frac{{\left(X^{10} - 66 \, X^{5} Y^{5} - 11 \, Y^{10}\right)}^{2}}{144 \, {\left(X^{4} + 3 \, X^{3} Y + 4 \, X^{2} Y^{2} + 2 \, X Y^{3} + Y^{4}\right)}^{2} {\left(X^{4} - 2 \, X^{3} Y + 4 \, X^{2} Y^{2} - 3 \, X Y^{3} + Y^{4}\right)}^{2} {\left(X^{2} - X Y - Y^{2}\right)}^{2} Y^{2}} \\[4mm]
            \frac{{-\left(11 \, X^{10} - 66 \, X^{5} Y^{5} - Y^{10}\right)} {\left(X^{10} - 66 \, X^{5} Y^{5} - 11 \, Y^{10}\right)}}{72 \, {\left(X^{4} + 3 \, X^{3} Y + 4 \, X^{2} Y^{2} + 2 \, X Y^{3} + Y^{4}\right)}^{2} {\left(X^{4} - 2 \, X^{3} Y + 4 \, X^{2} Y^{2} - 3 \, X Y^{3} + Y^{4}\right)}^{2} {\left(X^{2} - X Y - Y^{2}\right)}^{2} X Y} \\[4mm]
            \frac{{\left(11 \, X^{10} - 66 \, X^{5} Y^{5} - Y^{10}\right)}^{2}}{144 \, {\left(X^{4} + 3 \, X^{3} Y + 4 \, X^{2} Y^{2} + 2 \, X Y^{3} + Y^{4}\right)}^{2} {\left(X^{4} - 2 \, X^{3} Y + 4 \, X^{2} Y^{2} - 3 \, X Y^{3} + Y^{4}\right)}^{2} {\left(x^{2} - X Y - Y^{2}\right)}^{2} X^{2}}
        \end{array}
    \right)
    \\[4mm]
    \cdot\left(
        \begin{array}{r}
            \frac{{\left(11 \, X^{10} - 66 \, X^{5} Y^{5} - Y^{10}\right)} Y}{6 \, {\left(X^{4} + 3 \, X^{3} Y + 4 \, X^{2} Y^{2} + 2 \, X Y^{3} + Y^{4}\right)} {\left(X^{4} - 2 \, X^{3} Y + 4 \, X^{2} Y^{2} - 3 \, X Y^{3} + Y^{4}\right)} {\left(X^{2} - X Y - Y^{2}\right)} X} \\[4mm]
            \frac{-5 \, {\left(X^{8} - X^{6} Y^{2} + X^{4} Y^{4} - X^{2} Y^{6} + Y^{8}\right)} {\left(X^{2} + Y^{2}\right)}}{6 \, {\left(X^{4} + 3 \, X^{3} Y + 4 \, X^{2} Y^{2} + 2 \, X Y^{3} + Y^{4}\right)} {\left(X^{4} - 2 \, X^{3} Y + 4 \, X^{2} Y^{2} - 3 \, X Y^{3} + Y^{4}\right)} {\left(X^{2} - X Y - Y^{2}\right)}} \\[4mm]
            \frac{{-\left(X^{10} - 66 \, X^{5} Y^{5} - 11 \, Y^{10}\right)} X}{6 \, {\left(X^{4} + 3 \, X^{3} Y + 4 \, X^{2} Y^{2} + 2 \, X Y^{3} + Y^{4}\right)} {\left(X^{4} - 2 \, X^{3} Y + 4 \, X^{2} Y^{2} - 3 \, X Y^{3} + Y^{4}\right)} {\left(X^{2} - X Y - Y^{2}\right)} Y}
        \end{array}
    \right)^T
\end{multline*}
\begin{multline*}
    e_2=
    \left(
        \begin{array}{r}
            \frac{{\left(X^{10} - 66 \, X^{5} Y^{5} - 11 \, Y^{10}\right)} X}{12 \, {\left(X^{4} + 3 \, X^{3} Y + 4 \, X^{2} Y^{2} + 2 \, X Y^{3} + Y^{4}\right)} {\left(X^{4} - 2 \, X^{3} Y + 4 \, X^{2} Y^{2} - 3 \, X Y^{3} + Y^{4}\right)} {\left(X^{2} - X Y - Y^{2}\right)} Y} \\[4mm]
            \frac{-5 \, {\left(X^{8} - X^{6} Y^{2} + X^{4} Y^{4} - X^{2} Y^{6} + Y^{8}\right)} {\left(X^{2} + Y^{2}\right)}}{6 \, {\left(X^{4} + 3 \, X^{3} Y + 4 \, X^{2} Y^{2} + 2 \, X Y^{3} + Y^{4}\right)} {\left(X^{4} - 2 \, X^{3} Y + 4 \, X^{2} Y^{2} - 3 \, X Y^{3} + Y^{4}\right)} {\left(X^{2} - X Y - Y^{2}\right)}} \\[4mm]
            \frac{{-\left(11 \, X^{10} - 66 \, X^{5} Y^{5} - Y^{10}\right)} Y}{12 \, {\left(X^{4} + 3 \, X^{3} Y + 4 \, X^{2} Y^{2} + 2 \, X Y^{3} + Y^{4}\right)} {\left(X^{4} - 2 \, X^{3} Y + 4 \, X^{2} Y^{2} - 3 \, X Y^{3} + Y^{4}\right)} {\left(X^{2} - X Y - Y^{2}\right)} X}
        \end{array}
    \right)
    \\[4mm]
    \cdot\left(
        \begin{array}{r}
            \frac{{\left(11 \, X^{10} - 66 \, X^{5} Y^{5} - Y^{10}\right)}^{2}}{144 \, {\left(X^{4} + 3 \, X^{3} Y + 4 \, X^{2} Y^{2} + 2 \, X Y^{3} + Y^{4}\right)}^{2} {\left(X^{4} - 2 \, X^{3} Y + 4 \, X^{2} Y^{2} - 3 \, X Y^{3} + Y^{4}\right)}^{2} {\left(X^{2} - X Y - Y^{2}\right)}^{2} X^{2}} \\[4mm]
            \frac{{\left(11 \, X^{10} - 66 \, X^{5} Y^{5} - Y^{10}\right)} {\left(X^{10} - 66 \, X^{5} Y^{5} - 11 \, Y^{10}\right)}}{144 \, {\left(X^{4} + 3 \, X^{3} Y + 4 \, X^{2} Y^{2} + 2 \, X Y^{3} + Y^{4}\right)}^{2} {\left(X^{4} - 2 \, X^{3} Y + 4 \, X^{2} Y^{2} - 3 \, X Y^{3} + Y^{4}\right)}^{2} {\left(X^{2} - X Y - Y^{2}\right)}^{2} X Y} \\[4mm]
            \frac{{\left(X^{10} - 66 \, X^{5} Y^{5} - 11 \, Y^{10}\right)}^{2}}{144 \, {\left(X^{4} + 3 \, X^{3} Y + 4 \, X^{2} Y^{2} + 2 \, X Y^{3} + Y^{4}\right)}^{2} {\left(X^{4} - 2 \, X^{3} Y + 4 \, X^{2} Y^{2} - 3 \, X Y^{3} + Y^{4}\right)}^{2} {\left(X^{2} - X Y - Y^{2}\right)}^{2} Y^{2}}
        \end{array}
    \right)^{T}.
\end{multline*}
}

These matrices generate a subalgebra of $\slnc[3][X,Y,P^{-1}]^{\brg}$ isomorphic to $\slnc[3]$, illustrating Theorem \ref{thm:aliahz}.  
From Theorem \ref{thm:alia} we learn that the automorphic Lie algebra, the subalgebra $\slnc[3][X,Y,P^{-1}]^{\brg}_0$ of degree zero, has basis
\begin{align*}
\bar{e}_0&=F_{-4}e_0,\quad \bar{e}_1=F_{2}e_1,\quad \bar{e}_2=F_{2}e_2,
\\\bar{f}_0&=F_{4}f_0,\quad \bar{f}_1=F_{-2}f_1,\quad \bar{f}_2=F_{-2}f_2,
\\\bar{h}_1&=h_1,\quad\bar{h}_2=h_2
\end{align*}
as free $\mb{C}[\mb{I}]$-module. The structure constants are seen to be as in Theorem \ref{thm:aliam}.
\end{Example}

\section{Examples and applications}
\label{sec:examples}
In this section we provide more ways to use the results obtained in Section \ref{sec:alias}. First we recall that automorphic Lie algebras can be studied through graphs, whose set of vertices is a root system. In rank $2$ this allows a graphical classification of automorphic Lie algebras. In higher rank, graph theory can be used as well, but pictures of these graphs will likely be too complicated to be of any help.

In the next subsection we study automorphic Lie algebras by integrating the $2$-cocycle to a $1$-form $\omega^1$ as was done in \cite{knibbeler2017higher}, and compare the results. It turns out the results can be extended to every simple Lie type, and there exists a unique normal form for $\omega^1$ if we allow values $1$, $0$ and $-1$.

We end this section with tables of structure constants for automorphic Lie algebras of all exceptional Lie types.

\subsection{Graphs on root systems and the classification of automorphic Lie algebras of rank 2}
\label{sec:graphs}
Each component of the symmetric $2$-cocycle (\ref{eq:2cocycles}) is a map sending a pair of roots to one of two values, and can therefore conveniently be represented by a graph, whose set of vertices is the set of roots, and whose set of edges is, for instance, the collection of pairs of roots that are sent to the positive value by the cocycle.

In this section we present these graphs for all cases with Lie rank $2$ 
and all embeddings $\PSL(2,\mb C)\rightarrow \Aut{\mf g}$.
The choice to stick to rank $2$ is practical: we can draw the root system without using projections.

The automorphic Lie algebras of type $A_2$ (referring to the Lie type of $\mf g$) appearing in this section have been published in \cite{knibbeler2017higher}. The automorphic Lie algebras of type $B_2\cong C_2$ and $G_2$ are new results. 
The classification of automorphic Lie algebras of type $G_2$ is especially interesting, not only because the exceptional Lie algebras are unexplored in this context, but also because the portion of monomorphisms of polyhedral groups into $\Aut{\mf g _2}$ that factor through $\PSL(2,\mb C)$ is very large \cite[Theorem 3.3]{knibbeler2022polyhedral}.

\begin{Example}
    \label{ex:graph}
    Let us construct the graph, and therefore the structure constants, of the automorphic Lie algebra of Example \ref{ex:explicit invariant matrices} without computing the equivariant matrices. To this end, we must obtain the 2-cocycle (\ref{eq:2cocycles}).

    We start with a homomorphism $\slnc[2]\to\slnc[3]$ (to obtain the $k$ in (\ref{eq:2cocycles})). This is given by the Dynkin diagram with labels from $\{0,1,2\}$. In Example \ref{ex:explicit invariant matrices}, both simple roots have label $2$, and this is the only even case for $A_2$ (where even means that there is no label $1$, and implies the homomorphism $\slnc[2]\to\slnc[3]$ lifts to a homomorphism $\PSL(2,\mb C)\to\Aut{\SLNC[3]}$). The map $k$ on the roots is the additive extension of these labels. If we write the values of $k$ in the root system, we get 
    \[\begin{tikzpicture}[scale=0.4]
        \path 
        node at ( 0,0) [root,draw,label=270: $ $] (zero) {$ $}	
        node at ( 0,0) [2ndroot,draw,label=270: $ $] (zero) {$ $}	
        node at ( 4,0) [root,draw,label=0: $\alpha_1$, label=90: $2$] (one) {$ $}
        node at ( 2,3.464) [root,draw,label=60: $ $, label=90: $4$] (two) {$ $}
        node at ( -2,3.464) [root,draw,label=180: $\alpha_2$, label=90: $2$] (three) {$ $}
        node at ( -4,0) [root,draw,label=180: $ $, label=90: $-2$] (four) {$ $}
        node at ( -2,-3.464) [root,draw,label=240: $ $, label=90: $-4$] (five) {$ $}
        node at ( 2,-3.464) [root,draw,label=300: $ $, label=90: $-2$] (six) {$ $};
    \end{tikzpicture}\]
    where we do not draw lines from the origin to the roots, as is usually done, in order to avoid confusion with the edges of the graph to follow. Instead, we draw the origin with a double circle.
    
    The map $\ex$ appearing in (\ref{eq:2cocycles}) can be read of in Table \ref{tab:exponents}. Choosing the icosahedral group, the map $\ex\circ k$ on the root system becomes
    \[\begin{tikzpicture}[scale=0.4]
        \path 
        node at ( 0,0) [root,draw,label=270: $ $] (zero) {$ $}	
        node at ( 0,0) [2ndroot,draw,label=270: $ $] (zero) {$ $}	
        node at ( 4,0) [root,draw,label=0: $\alpha_1$, label=90: {$({\color{\colora}1},{\color{\colorb}1},{\color{\colorc}1})$}] (one) {$ $}
        node at ( 2,3.464) [root,draw,label=60: $ $, label=90: {$({\color{\colora}2},{\color{\colorb}2},{\color{\colorc}0})$}] (two) {$ $}
        node at ( -2,3.464) [root,draw,label=180: $\alpha_2$, label=90: {$({\color{\colora}1},{\color{\colorb}1},{\color{\colorc}1})$}] (three) {$ $}
        node at ( -4,0) [root,draw,label=180: $ $, label=90: {$({\color{\colora}4},{\color{\colorb}2},{\color{\colorc}1})$}] (four) {$ $}
        node at ( -2,-3.464) [root,draw,label=240: $ $, label=90: {$({\color{\colora}3},{\color{\colorb}1},{\color{\colorc}0})$}] (five) {$ $}
        node at ( 2,-3.464) [root,draw,label=300: $ $, label=90: {$({\color{\colora}4},{\color{\colorb}2},{\color{\colorc}1})$}] (six) {$ $};
    \end{tikzpicture}\]
where we have used colours to distinguish the orders ${\color{\colora}5}$, ${\color{\colorb}3}$ and ${\color{\colorc}2}$ appearing in the icosahedral group. 

At this stage we can draw the graph of the 2-cocycle (\ref{eq:2cocycles}), resulting in
\[\begin{tikzpicture}[scale=0.45]
    \path 
    node at ( 4,0) [bigroot,draw,label=0: $\alpha_1$] (one) {$ $}
    node at ( 2,3.464) [bigroot,draw,label=60: $ $] (two) {$ $}
    node at ( -2,3.464) [bigroot,draw,label=180: $\alpha_2$] (three) {$ $}
    node at ( -4,0) [bigroot,draw,label=180: $ $] (four) {$ $}
    node at ( -2,-3.464) [bigroot,draw,label=240: $ $] (five) {$ $}
    node at ( 2,-3.464) [bigroot,draw,label=300: $ $] (six) {$ $};
      \draw[scochainc] (three.330) to node [sloped,above]{$ $} (one.150);
      \draw[scochaina] (two.210-\dangle) to node []{$ $} (four.30+\dangle);
      \draw[scochainb] (two.210+\dangle) to node []{$ $} (four.30-\dangle);
      \draw[scochaina] (four.330-\dangle) to node [sloped,above]{$ $} (six.150+\dangle);
      \draw[scochainb] (four.330) to node [sloped,above]{$ $} (six.150);
      \draw[scochainc] (four.330+\dangle) to node [sloped,above]{$ $} (six.150-\dangle);
      \draw[scochaina] (six.90-\dangle) to node [sloped,above]{$ $} (two.270+\dangle);
      \draw[scochainb] (six.90+\dangle) to node [sloped,above]{$ $} (two.270-\dangle);
      \draw[scochaina] (one.180-\dangle) to node []{$ $} (four.0+\dangle);
      \draw[scochainb] (one.180) to node []{$ $} (four.0);
      \draw[scochainc] (one.180+\dangle) to node []{$ $} (four.0-\dangle);
      \draw[scochaina] (two.240-\dangle) to node []{$ $} (five.60+\dangle);
      \draw[scochainb] (two.240+\dangle) to node []{$ $} (five.60-\dangle);
      \draw[scochaina] (three.300-\dangle) to node []{$ $} (six.120+\dangle);
      \draw[scochainb] (three.300) to node []{$ $} (six.120);
      \draw[scochainc] (three.300+\dangle) to node []{$ $} (six.120-\dangle);
  \end{tikzpicture}\]
  where we have used dashdotted lines, solid lines and dashed lines for the numbers $5$, $3$ and $2$ respectively, just in case the colours are hard to distinguish.
\end{Example}

If we repeat the above example for all polyhedral groups, we obtain the classification presented in Figure \ref{fig:A2}. Notice in particular that the automorphic Lie algebras with tetrahedral, octahedral and icosahedral symmetry groups are isomorphic at type $A_2$. This Lie algebra has been used to study integrable partial differential equations by Berkeley, Mikhailov and Xenitidis \cite{berkeley2016darboux}.

\begin{figure}[ht!] 
    \caption[]{Automorphic Lie algebras of type $A_2$, from Dynkin grading 
        \begin{tikzpicture}[scale=\dynkintablescale, baseline=(current bounding box.center), font=\dynkinfont, decoration={markings, mark=between positions 0.6 and 5 step 8mm with {\arrow{<}}}]
            \path 
            node at ( -0.5,0) [dynkinnode,label=90: $2 $,label=270: $\alpha_1$] (one) {$ $}
            node at ( 0.5,0) [dynkinnode,label=90: $2 $,label=270: $\alpha_2$] (two) {$ $}
            ;
            \draw[] (one) to node []{$ $} (two);
        \end{tikzpicture}
    }
    \label{fig:A2}
    \begin{center}
        $\begin{array}{c}
            \begin{array}{ccc}
                \rg=\cg{2}&\rg=\cg{n},\,n\ge 3&\rg=\dg{2}\\\\ 
                \begin{tikzpicture}[scale=\scaleAA]
                    \path 	
                    node at ( 4,0) [root,draw,label=0: $\alpha_1$] (one) {$ $}
                    node at ( 2,3.464) [root,draw,label=60: $ $] (two) {$ $}
                    node at ( -2,3.464) [root,draw,label=180: $\alpha_2$] (three) {$ $}
                    node at ( -4,0) [root,draw,label=180: $ $] (four) {$ $}
                    node at ( -2,-3.464) [root,draw,label=240: $ $] (five) {$ $}
                    node at ( 2,-3.464) [root,draw,label=300: $ $] (six) {$ $};
                        \draw[scochaina] (three.330-\dangle) to node [sloped,above]{$ $} (one.150+\dangle);
                        \draw[scochainb] (three.330+\dangle) to node [sloped,above]{$ $} (one.150-\dangle);
                        \draw[scochaina] (four.330-\dangle) to node [sloped,above]{$ $} (six.150+\dangle);
                        \draw[scochainb] (four.330+\dangle) to node [sloped,above]{$ $} (six.150-\dangle);
                        \draw[scochaina] (one.180+\dangle) to node []{$ $} (four.0-\dangle);
                        \draw[scochainb] (one.180-\dangle) to node []{$ $} (four.0+\dangle);
                        \draw[scochaina] (three.300-\dangle) to node []{$ $} (six.120+\dangle);
                        \draw[scochainb] (three.300+\dangle) to node []{$ $} (six.120-\dangle);
                \end{tikzpicture}&
                \begin{tikzpicture}[scale=\scaleAA]
                    \path 	
                    node at ( 4,0) [root,draw,label=0: $\alpha_1$] (one) {$ $}
                    node at ( 2,3.464) [root,draw,label=60: $ $] (two) {$ $}
                    node at ( -2,3.464) [root,draw,label=180: $\alpha_2$] (three) {$ $}
                    node at ( -4,0) [root,draw,label=180: $ $] (four) {$ $}
                    node at ( -2,-3.464) [root,draw,label=240: $ $] (five) {$ $}
                    node at ( 2,-3.464) [root,draw,label=300: $ $] (six) {$ $};
                    \draw[scochaina] (two.210-\dangle) to node []{$ $} (four.30+\dangle);
                    \draw[scochainb] (two.210+\dangle) to node []{$ $} (four.30-\dangle);
                    \draw[scochaina] (four.330-\dangle) to node [sloped,above]{$ $} (six.150+\dangle);
                    \draw[scochainb] (four.330) to node [sloped,above]{$ $} (six.150);
                    \draw[scochaina] (six.90-\dangle) to node [sloped,above]{$ $} (two.270+\dangle);
                    \draw[scochainb] (six.90+\dangle) to node [sloped,above]{$ $} (two.270-\dangle);
                    \draw[scochaina] (one.180-\dangle) to node []{$ $} (four.0+\dangle);
                    \draw[scochainb] (one.180+\dangle) to node []{$ $} (four.0-\dangle);
                    \draw[scochaina] (two.240-\dangle) to node []{$ $} (five.60+\dangle);
                    \draw[scochainb] (two.240+\dangle) to node []{$ $} (five.60-\dangle);
                    \draw[scochaina] (three.300-\dangle) to node []{$ $} (six.120+\dangle);
                    \draw[scochainb] (three.300+\dangle) to node []{$ $} (six.120-\dangle);
                \end{tikzpicture}&
                \begin{tikzpicture}[scale=\scaleAA]
                    \path 	
                    node at ( 4,0) [root,draw,label=0: $\alpha_1$] (one) {$ $}
                    node at ( 2,3.464) [root,draw,label=60: $ $] (two) {$ $}
                    node at ( -2,3.464) [root,draw,label=180: $\alpha_2$] (three) {$ $}
                    node at ( -4,0) [root,draw,label=180: $ $] (four) {$ $}
                    node at ( -2,-3.464) [root,draw,label=240: $ $] (five) {$ $}
                    node at ( 2,-3.464) [root,draw,label=300: $ $] (six) {$ $};
                        \draw[scochaina] (three.330-\dangle) to node [sloped,above]{$ $} (one.150+\dangle);
                        \draw[scochainb] (three.330) to node [sloped,above]{$ $} (one.150);
                        \draw[scochainc] (three.330+\dangle) to node [sloped,above]{$ $} (one.150-\dangle);
                        \draw[scochaina] (four.330-\dangle) to node [sloped,above]{$ $} (six.150+\dangle);
                        \draw[scochainb] (four.330) to node [sloped,above]{$ $} (six.150);
                        \draw[scochainc] (four.330+\dangle) to node [sloped,above]{$ $} (six.150-\dangle);
                        \draw[scochaina] (one.180+\dangle) to node []{$ $} (four.0-\dangle);
                        \draw[scochainb] (one.180) to node []{$ $} (four.0);
                        \draw[scochainc] (one.180-\dangle) to node []{$ $} (four.0+\dangle);
                        \draw[scochaina] (three.300-\dangle) to node []{$ $} (six.120+\dangle);
                        \draw[scochainb] (three.300) to node []{$ $} (six.120);
                        \draw[scochainc] (three.300+\dangle) to node []{$ $} (six.120-\dangle);
                    \end{tikzpicture}
            \end{array}\\\\
            \begin{array}{cc}
                \rg=\dg{n}, n\ge 3&\rg=\tg, \og, \yg\\\\
                \begin{tikzpicture}[scale=\scaleAA]
                    \path 	
                    node at ( 4,0) [root,draw,label=0: $\alpha_1$] (one) {$ $}
                    node at ( 2,3.464) [root,draw,label=60: $ $] (two) {$ $}
                    node at ( -2,3.464) [root,draw,label=180: $\alpha_2$] (three) {$ $}
                    node at ( -4,0) [root,draw,label=180: $ $] (four) {$ $}
                    node at ( -2,-3.464) [root,draw,label=240: $ $] (five) {$ $}
                    node at ( 2,-3.464) [root,draw,label=300: $ $] (six) {$ $};
                        \draw[scochainb] (three.330) to node [sloped,above]{$ $} (one.150);
                        \draw[scochainc] (three.330+\dangle) to node [sloped,above]{$ $} (one.150-\dangle);
                        \draw[scochaina] (two.210) to node []{$ $} (four.30);
                        \draw[scochaina] (four.330-\dangle) to node [sloped,above]{$ $} (six.150+\dangle);
                        \draw[scochainb] (four.330) to node [sloped,above]{$ $} (six.150);
                        \draw[scochainc] (four.330+\dangle) to node [sloped,above]{$ $} (six.150-\dangle);
                        \draw[scochaina] (six.90) to node [sloped,above]{$ $} (two.270);
                        \draw[scochaina] (one.180-\dangle) to node []{$ $} (four.0+\dangle);
                        \draw[scochainb] (one.180) to node []{$ $} (four.0);
                        \draw[scochainc] (one.180+\dangle) to node []{$ $} (four.0-\dangle);
                        \draw[scochaina] (two.240) to node []{$ $} (five.60);
                        \draw[scochaina] (three.300-\dangle) to node []{$ $} (six.120+\dangle);
                        \draw[scochainb] (three.300) to node []{$ $} (six.120);
                        \draw[scochainc] (three.300+\dangle) to node []{$ $} (six.120-\dangle);
                \end{tikzpicture}&
                \begin{tikzpicture}[scale=\scaleAA]
                    \path 	
                    node at ( 4,0) [root,draw,label=0: $\alpha_1$] (one) {$ $}
                    node at ( 2,3.464) [root,draw,label=60: $ $] (two) {$ $}
                    node at ( -2,3.464) [root,draw,label=180: $\alpha_2$] (three) {$ $}
                    node at ( -4,0) [root,draw,label=180: $ $] (four) {$ $}
                    node at ( -2,-3.464) [root,draw,label=240: $ $] (five) {$ $}
                    node at ( 2,-3.464) [root,draw,label=300: $ $] (six) {$ $};
                        \draw[scochainc] (three.330) to node [sloped,above]{$ $} (one.150);
                        \draw[scochaina] (two.210-\dangle) to node []{$ $} (four.30+\dangle);
                        \draw[scochainb] (two.210+\dangle) to node []{$ $} (four.30-\dangle);
                        \draw[scochaina] (four.330-\dangle) to node [sloped,above]{$ $} (six.150+\dangle);
                        \draw[scochainb] (four.330) to node [sloped,above]{$ $} (six.150);
                        \draw[scochainc] (four.330+\dangle) to node [sloped,above]{$ $} (six.150-\dangle);
                        \draw[scochaina] (six.90-\dangle) to node [sloped,above]{$ $} (two.270+\dangle);
                        \draw[scochainb] (six.90+\dangle) to node [sloped,above]{$ $} (two.270-\dangle);
                        \draw[scochaina] (one.180-\dangle) to node []{$ $} (four.0+\dangle);
                        \draw[scochainb] (one.180) to node []{$ $} (four.0);
                        \draw[scochainc] (one.180+\dangle) to node []{$ $} (four.0-\dangle);
                        \draw[scochaina] (two.240-\dangle) to node []{$ $} (five.60+\dangle);
                        \draw[scochainb] (two.240+\dangle) to node []{$ $} (five.60-\dangle);
                        \draw[scochaina] (three.300-\dangle) to node []{$ $} (six.120+\dangle);
                        \draw[scochainb] (three.300) to node []{$ $} (six.120);
                        \draw[scochainc] (three.300+\dangle) to node []{$ $} (six.120-\dangle);
                \end{tikzpicture}
            \end{array}
        \end{array}$
    \end{center}
\end{figure}
Notice that we might as well look at one component of $\ex$ at the time, and add the resulting graphs afterwards. Therefore we present the graphs on the root systems of type $C_2$ and $G_2$ in Figures \ref{fig:B2} and \ref{fig:G2} for each value of $\nu$ instead of each group, which keeps things simpler and shorter. We have used the numbering of simple roots consistent with Bourbaki \cite{bourbaki2002lie} and SageMath \cite{sagemath} (which we will use in Section \ref{sec:exceptional aLias}). For Lie type $G_2$ this numbering is not consistent with that of Kac \cite{kac1990infinite} and \cite{10.1093/imrn/rnab376}.

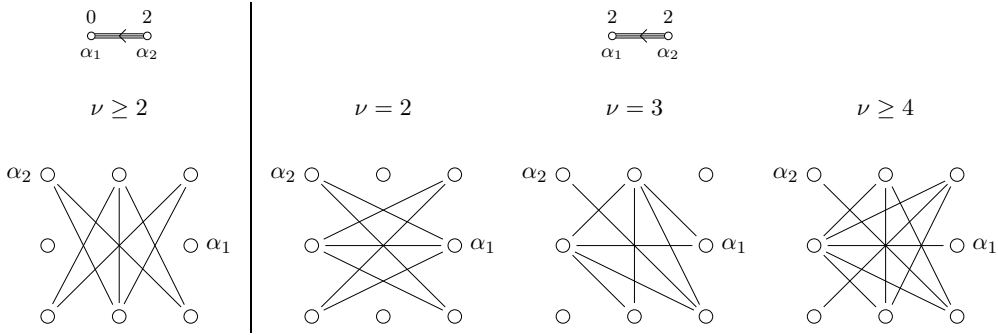
\begin{figure}[ht!] 
    \caption[]{$2$-cocycles on $C_2$ from Dynkin gradings        
    }
    \label{fig:B2}
    \begin{center}
        \begin{adjustbox}{width=\linewidth}
        $\begin{array}{c|ccc}
            \begin{tikzpicture}[scale=\dynkintablescale,baseline=(current bounding box.center), font=\dynkinfont,decoration={
                markings,
                    mark=at position 0.5 with {
            \draw (-3pt,-3pt) -- (0pt,0pt);
            \draw (0pt,-0pt) -- (-3pt,3pt);
            } }]
            \path 
            node at ( 0,0) [sdynkinnode,label=90: $0 $,label=270: $\alpha_1$] (one) {$ $}
            node at ( 1,0) [ldynkinnode,label=90: $2 $,label=270: $\alpha_2$] (two) {$ $}
            ;
            \draw[] (one.30) to node []{$ $} (two.150);
            \draw[] (one.-30) to node []{$ $} (two.-150);
            \fill[postaction={decorate}] (two) -- (one);
            \end{tikzpicture} 
            &
            &
            \begin{tikzpicture}[scale=\dynkintablescale,baseline=(current bounding box.center), font=\dynkinfont,decoration={
                markings,
                    mark=at position 0.5 with {
            \draw (-3pt,-3pt) -- (0pt,0pt);
            \draw (0pt,-0pt) -- (-3pt,3pt);
            } }]
            \path 
            node at ( 0,0) [sdynkinnode,label=90: $2 $,label=270: $\alpha_1$] (one) {$ $}
            node at ( 1,0) [ldynkinnode,label=90: $2 $,label=270: $\alpha_2$] (two) {$ $}
            ;
            \draw[] (one.30) to node []{$ $} (two.150);
            \draw[] (one.-30) to node []{$ $} (two.-150);
            \fill[postaction={decorate}] (two) -- (one);
            \end{tikzpicture}
            &
            \\\\
            \nu\ge 2&\nu= 2&\!\!\!\nu=3&\!\!\!\nu\ge 4
            \\\\
            \begin{tikzpicture}[scale=\scaleBB]
                \path 		
                node at (1,0) [root,draw,label=0: {$\alpha_1$}] (m10) {$  $}
                node at (-1,1) [root,draw,label=180: {$\alpha_2$}] (m01) {$  $}
                node at (0,1) [root,draw,label=90: {$ $}] (m11) {$  $}
                node at (1,1) [root,draw,label=0: {$ $}] (m21) {$  $}
                
                node at (-1,0) [root,draw,label=0: {$ $}] (n10) {$  $}
                node at (1,-1) [root,draw,label=180: {$ $}] (n01) {$  $}
                node at (0,-1) [root,draw,label=90: {$ $}] (n11) {$  $}
                node at (-1,-1) [root,draw,label=0: {$ $}] (n21) {$  $}
                ;
                \draw[scochain](m01.315) to node{}(n01.135);
                \draw[scochain](m11.270) to node{}(n11.90);
                \draw[scochain](m21.225) to node{}(n21.45);
                \draw[scochain](m01.300) to node{}(n11.120);
                \draw[scochain](m11.300) to node{}(n01.120);
                \draw[scochain](m11.255) to node{}(n21.75);
                \draw[scochain](m21.255) to node{}(n11.75);
            \end{tikzpicture}
            &
            \begin{tikzpicture}[scale=\scaleBB]
                \path 	
                node at (1,0) [root,draw,label=0: {$\alpha_1$}] (m10) {$  $}
                node at (-1,1) [root,draw,label=180: {$\alpha_2$}] (m01) {$  $}
                node at (0,1) [root,draw,label=90: {$ $}] (m11) {$  $}
                node at (1,1) [root,draw,label=0: {$ $}] (m21) {$  $}
                
                node at (-1,0) [root,draw,label=0: {$ $}] (n10) {$  $}
                node at (1,-1) [root,draw,label=180: {$ $}] (n01) {$  $}
                node at (0,-1) [root,draw,label=90: {$ $}] (n11) {$  $}
                node at (-1,-1) [root,draw,label=0: {$ $}] (n21) {$  $}
                ;
                \draw[scochain](m10.180) to node{}(n10.0);
                \draw[scochain](m01.315) to node{}(n01.135);
                \draw[scochain](m21.225) to node{}(n21.45);
                \draw[scochain](n10.-30) to node{}(n01.150);
                \draw[scochain](m01.-30) to node{}(m10.150);
                \draw[scochain](m21.210) to node{}(n10.30);
                \draw[scochain](m10.210) to node{}(n21.30);
            \end{tikzpicture}
            &\!\!\!
            \begin{tikzpicture}[scale=\scaleBB]
                \path 	
                node at (1,0) [root,draw,label=0: {$\alpha_1$}] (m10) {$  $}
                node at (-1,1) [root,draw,label=180: {$\alpha_2$}] (m01) {$  $}
                node at (0,1) [root,draw,label=90: {$ $}] (m11) {$  $}
                node at (1,1) [root,draw,label=0: {$ $}] (m21) {$  $}
                
                node at (-1,0) [root,draw,label=0: {$ $}] (n10) {$  $}
                node at (1,-1) [root,draw,label=180: {$ $}] (n01) {$  $}
                node at (0,-1) [root,draw,label=90: {$ $}] (n11) {$  $}
                node at (-1,-1) [root,draw,label=0: {$ $}] (n21) {$  $}
                ;
                \draw[scochain](m10.180) to node{}(n10.0);
                \draw[scochain](m01.315) to node{}(n01.135);
                \draw[scochain](m11.270) to node{}(n11.90);
                \draw[scochain](m10.135) to node{}(m11.-45);
                \draw[scochain](n10.45) to node{}(m11.225);
                \draw[scochain](n10.-45) to node{}(n11.135);
                \draw[scochain](m11.-60) to node{}(n01.120);
                \draw[scochain](n10.-30) to node{}(n01.150);
            \end{tikzpicture}
            &\!\!\!
            \begin{tikzpicture}[scale=\scaleBB]
                \path 	
                node at (1,0) [root,draw,label=0: {$\alpha_1$}] (m10) {$  $}
                node at (-1,1) [root,draw,label=180: {$\alpha_2$}] (m01) {$  $}
                node at (0,1) [root,draw,label=90: {$ $}] (m11) {$  $}
                node at (1,1) [root,draw,label=0: {$ $}] (m21) {$  $}
                
                node at (-1,0) [root,draw,label=0: {$ $}] (n10) {$  $}
                node at (1,-1) [root,draw,label=180: {$ $}] (n01) {$  $}
                node at (0,-1) [root,draw,label=90: {$ $}] (n11) {$  $}
                node at (-1,-1) [root,draw,label=0: {$ $}] (n21) {$  $}
                ;
                \draw[scochain](m10.180) to node{}(n10.0);
                \draw[scochain](m01.315) to node{}(n01.135);
                \draw[scochain](m11.270) to node{}(n11.90);
                \draw[scochain](m21.225) to node{}(n21.45);

                \draw[scochain](n10.45) to node{}(m11.225);
                \draw[scochain](n10.-45) to node{}(n11.135);
                \draw[scochain](m11.-60) to node{}(n01.120);
                \draw[scochain](n10.-30) to node{}(n01.150);
                \draw[scochain](n10.30) to node{}(m21.210);
                \draw[scochain](n11.60) to node{}(m21.240);
            \end{tikzpicture}
        \end{array}$
        \end{adjustbox}
    \end{center}
\end{figure}

\begin{figure}[ht!] 
    \caption[]{$2$-cocycles on $G_2$ from Dynkin gradings       
    }
    \label{fig:G2}
    \begin{center}
        $\begin{array}{c}
            \begin{tikzpicture}[scale=\dynkintablescale,baseline=(current bounding box.center), font=\dynkinfont,decoration={
                markings,
                    mark=at position 0.5 with {
            \draw (3pt,3pt) -- (0pt,0pt);
            \draw (0pt,-0pt) -- (3pt,-3pt);
            } }]
            \path 
            node at ( 1,0) [ldynkinnode,label=90: $0 $,label=270: $\alpha_1$] (one) {$ $}
            node at ( 2,0) [sdynkinnode,label=90: $2 $,label=270: $\alpha_2$] (two) {$ $}
            ;
            \draw[] (one) to node []{$ $} (two);
            \draw[] (one.45) to node []{$ $} (two.135);
            \draw[] (one.-45) to node []{$ $} (two.-135);
            \fill[postaction={decorate}] (one) -- (two);
            \end{tikzpicture}
            \\
            \begin{array}{cc}
                \nu=2&\nu\ge 3\\
                \begin{tikzpicture}[scale=\scaleGG]
                    \path 
                    node at (-1.5,.866) [root,draw,label=90: $\alpha_2$] (m10) {$ $}
                    node at (1,0) [root,draw,label=0: $\alpha_1$] (m01) {$ $}
                    node at (-.5,.866) [root,draw,label=90: $ $] (m11) {$ $}
                    node at (.5,.866) [root,draw,label=90: $ $] (m12) {$ $}
                    node at (1.5,.866) [root,draw,label=90: $ $] (m13) {$ $}
                    node at (0,2*.866) [root,draw,label=90: $ $] (m23) {$ $}
                    node at (1.5,-.866) [root,draw,label=270: $ $] (n10) {$ $}
                    node at (-1,0) [root,draw,label=180: $ $] (n01) {$ $}
                    node at (.5,-.866) [root,draw,label=270: $ $] (n11) {$ $}
                    node at (-.5,-.866) [root,draw,label=270: $ $] (n12) {$ $}
                    node at (-1.5,-.866) [root,draw,label=270: $ $] (n13) {$ $}
                    node at (0,-2*.866) [root,draw,label=270: $ $] (n23) {$ $}		
                    ;              
                    \draw[scochain] (m11) to node []{$ $} (n12);
                    \draw[scochain] (m12) to (n11);
                    
                    \draw[scochain] (m12) to node []{$ $} (m11);
                    \draw[scochain] (n11) to node []{$ $} (n12);
                    
                    \draw[scochain] (m10) to [bend left=\bend](m13);
                    \draw[scochain] (n10) to [bend left=\bend](n13);
                    
                    \draw[scochain]  (m13) to (n12);
                    \draw[scochain] (m12) to (n13);
                    \draw[scochain] (m11) to (n10);
                    \draw[scochain] (m10) to (n11);
                    
                    \draw[scochain] (m10) to (n10);
                    \draw[scochain] (m11) to (n11);
                    \draw[scochain] (m12) to (n12);
                    \draw[scochain] (m13) to (n13);
                \end{tikzpicture}
                &
                \begin{tikzpicture}[scale=\scaleGG]
                    \path 
                    node at (-1.5,.866) [root,draw,label=90: $\alpha_2$] (m10) {$ $}
                    node at (1,0) [root,draw,label=0: $\,\,\alpha_1$] (m01) {$ $}
                    node at (-.5,.866) [root,draw,label=90: $ $] (m11) {$ $}
                    node at (.5,.866) [root,draw,label=90: $ $] (m12) {$ $}
                    node at (1.5,.866) [root,draw,label=90: $ $] (m13) {$ $}
                    node at (0,2*.866) [root,draw,label=90: $ $] (m23) {$ $}
                    node at (1.5,-.866) [root,draw,label=270: $ $] (n10) {$ $}
                    node at (-1,0) [root,draw,label=180: $ $] (n01) {$ $}
                    node at (.5,-.866) [root,draw,label=270: $ $] (n11) {$ $}
                    node at (-.5,-.866) [root,draw,label=270: $ $] (n12) {$ $}
                    node at (-1.5,-.866) [root,draw,label=270: $ $] (n13) {$ $}
                    node at (0,-2*.866) [root,draw,label=270: $ $] (n23) {$ $}		
                    ;
                    \draw[scochain] (m11) to node []{$ $} (n12);
                    \draw[scochain] (m12) to (n11);
                    \draw[scochain] (n11) to node []{$ $} (n12);
    
                    \draw[scochain] (m23) to [bend right=\bend](n13);
                    \draw[scochain] (m23) to [bend left=\bend](n10);
                    \draw[scochain] (n10) to [bend left=\bend](n13);
    
                    \draw[scochain] (m10) to (n11);
                    \draw[scochain] (m23) to (n11);
                    \draw[scochain] (m23) to (n12);
                    \draw[scochain] (m13) to (n12);
                    \draw[scochain] (n10) to (m11);
                    \draw[scochain] (n13) to (m12);
    
                    \draw[scochain] (m10) to (n10);
                    \draw[scochain] (m11) to (n11);
                    \draw[scochain] (m12) to (n12);
                    \draw[scochain] (m13) to (n13);
                    \draw[scochain] (m23) to (n23);
    
                \end{tikzpicture}
            \end{array}
            \\
            \hline
            \\
            \begin{tikzpicture}[scale=\dynkintablescale,baseline=(current bounding box.center), font=\dynkinfont,decoration={
                markings,
                    mark=at position 0.5 with {
                \draw (3pt,3pt) -- (0pt,0pt);
                \draw (0pt,-0pt) -- (3pt,-3pt);
                } }]
                \path 
                node at ( 1,0) [ldynkinnode,label=90: $2 $,label=270: $\alpha_1$] (one) {$ $}
                node at ( 2,0) [sdynkinnode,label=90: $2 $,label=270: $\alpha_2$] (two) {$ $}
                ;
                \draw[] (one) to node []{$ $} (two);
                \draw[] (one.45) to node []{$ $} (two.135);
                \draw[] (one.-45) to node []{$ $} (two.-135);
                \fill[postaction={decorate}] (one) -- (two);
            \end{tikzpicture}
            \\\\
            \begin{array}{ccc}
                \nu=2&\nu=3&\nu=4
                \\
                \begin{tikzpicture}[scale=\scaleGG]
                    \path 
                    node at (-1.5,.866) [root,draw,label=90: $\alpha_2$] (six) {$ $}
                    node at (1,0) [root,draw,label=0: $\,\,\alpha_1$] (one) {$ $}
                    node at (-.5,.866) [root,draw,label=90: $ $] (five) {$ $}
                    node at (.5,.866) [root,draw,label=90: $ $] (three) {$ $}
                    node at (1.5,.866) [root,draw,label=90: $ $] (two) {$ $}
                    node at (0,2*.866) [root,draw,label=90: $ $] (four) {$ $}
                    node at (1.5,-.866) [root,draw,label=270: $ $] (twelve) {$ $}
                    node at (-1,0) [root,draw,label=180: $ $] (seven) {$ $}
                    node at (.5,-.866) [root,draw,label=270: $ $] (eleven) {$ $}
                    node at (-.5,-.866) [root,draw,label=270: $ $] (nine) {$ $}
                    node at (-1.5,-.866) [root,draw,label=270: $ $] (eight) {$ $}
                    node at (0,-2*.866) [root,draw,label=270: $ $] (ten) {$ $}		
                    ;          
                    \draw[scochain] (three) to node []{$ $} (seven);
                    \draw[scochain] (one) to (nine);
                    
                    \draw[scochain] (one) to node []{$ $} (three);
                    \draw[scochain] (nine) to node []{$ $} (seven);
                    
                    \draw[scochain] (four) to [bend left=\bend](twelve);
                    \draw[scochain] (ten) to [bend left=\bend](six);
                    
                    \draw[scochain]  (twelve) to (seven);
                    \draw[scochain] (one) to (six);
                    \draw[scochain] (three) to (ten);
                    \draw[scochain] (four) to (nine);
                    
                    \draw[scochain] (four) to (ten);
                    \draw[scochain] (three) to (nine);
                    \draw[scochain] (one) to (seven);
                    \draw[scochain] (twelve) to (six);
                \end{tikzpicture}
                &
                \begin{tikzpicture}[scale=\scaleGG]
                    \path 
                    node at (-1.5,.866) [root,draw,label=90: $\alpha_2$] (six) {$ $}
                    node at (1,0) [root,draw,label=0: $\,\,\alpha_1$] (one) {$ $}
                    node at (-.5,.866) [root,draw,label=90: $ $] (five) {$ $}
                    node at (.5,.866) [root,draw,label=90: $ $] (three) {$ $}
                    node at (1.5,.866) [root,draw,label=90: $ $] (two) {$ $}
                    node at (0,2*.866) [root,draw,label=90: $ $] (four) {$ $}
                    node at (1.5,-.866) [root,draw,label=270: $ $] (twelve) {$ $}
                    node at (-1,0) [root,draw,label=180: $ $] (seven) {$ $}
                    node at (.5,-.866) [root,draw,label=270: $ $] (eleven) {$ $}
                    node at (-.5,-.866) [root,draw,label=270: $ $] (nine) {$ $}
                    node at (-1.5,-.866) [root,draw,label=270: $ $] (eight) {$ $}
                    node at (0,-2*.866) [root,draw,label=270: $ $] (ten) {$ $}		
                    ;
                    \draw[scochain] (one) to node []{$ $} (five);
                    \draw[scochain] (eleven) to (seven);
                    \draw[scochain] (seven) to node []{$ $} (five);
                    
                    \draw[scochain] (twelve) to [bend right=\bend](four);
                    \draw[scochain] (twelve) to [bend left=\bend](eight);
                    \draw[scochain] (eight) to [bend left=\bend](four);
                    
                    \draw[scochain] (two) to (seven);
                    \draw[scochain] (twelve) to (seven);
                    \draw[scochain] (twelve) to (five);
                    \draw[scochain] (ten) to (five);
                    \draw[scochain] (eight) to (one);
                    \draw[scochain] (four) to (eleven);
                    
                    \draw[scochain] (two) to (eight);
                    \draw[scochain] (one) to (seven);
                    \draw[scochain] (eleven) to (five);
                    \draw[scochain] (ten) to (four);
                    \draw[scochain] (twelve) to (six);
                \end{tikzpicture}
                &
                \begin{tikzpicture}[scale=\scaleGG]
                    \path 
                    node at (-1.5,.866) [root,draw,label=90: $\alpha_2$] (six) {$ $}
                    node at (1,0) [root,draw,label=0: $\,\,\alpha_1$] (one) {$ $}
                    node at (-.5,.866) [root,draw,label=90: $ $] (five) {$ $}
                    node at (.5,.866) [root,draw,label=90: $ $] (three) {$ $}
                    node at (1.5,.866) [root,draw,label=90: $ $] (two) {$ $}
                    node at (0,2*.866) [root,draw,label=90: $ $] (four) {$ $}
                    node at (1.5,-.866) [root,draw,label=270: $ $] (twelve) {$ $}
                    node at (-1,0) [root,draw,label=180: $ $] (seven) {$ $}
                    node at (.5,-.866) [root,draw,label=270: $ $] (eleven) {$ $}
                    node at (-.5,-.866) [root,draw,label=270: $ $] (nine) {$ $}
                    node at (-1.5,-.866) [root,draw,label=270: $ $] (eight) {$ $}
                    node at (0,-2*.866) [root,draw,label=270: $ $] (ten) {$ $}		
                    ;
                    \draw[scochain] (one) to (three);
                    \draw[scochain] (three) to (five);
                    \draw[scochain] (three) to (seven);
                    \draw[scochain] (three) to (ten);
                    \draw[scochain] (three) to (eleven);
                    \draw[scochain] (four) to [bend left=\bend](twelve);
                    \draw[scochain] (five) to (seven);
                    \draw[scochain] (five) to (ten);
                    \draw[scochain] (five) to (twelve);
                    \draw[scochain] (six) to [bend right=\bend](ten);
                    \draw[scochain] (seven) to (nine);
                    \draw[scochain] (seven) to (eleven);
                    \draw[scochain] (seven) to (twelve);
                    
                    \draw[scochain] (three) to (nine);
                    \draw[scochain] (one) to (seven);
                    \draw[scochain] (eleven) to (five);
                    \draw[scochain] (ten) to (four);
                    \draw[scochain] (twelve) to (six);
                \end{tikzpicture}
            \end{array}
            \\\\
            \begin{array}{cc}
                \nu=5&\nu\ge 6
                \\
                \begin{tikzpicture}[scale=\scaleGG]
                    \path 
                    node at (-1.5,.866) [root,draw,label=90: $\alpha_2$] (six) {$ $}
                    node at (1,0) [root,draw,label=0: $\,\,\alpha_1$] (one) {$ $}
                    node at (-.5,.866) [root,draw,label=90: $ $] (five) {$ $}
                    node at (.5,.866) [root,draw,label=90: $ $] (three) {$ $}
                    node at (1.5,.866) [root,draw,label=90: $ $] (two) {$ $}
                    node at (0,2*.866) [root,draw,label=90: $ $] (four) {$ $}
                    node at (1.5,-.866) [root,draw,label=270: $ $] (twelve) {$ $}
                    node at (-1,0) [root,draw,label=180: $ $] (seven) {$ $}
                    node at (.5,-.866) [root,draw,label=270: $ $] (eleven) {$ $}
                    node at (-.5,-.866) [root,draw,label=270: $ $] (nine) {$ $}
                    node at (-1.5,-.866) [root,draw,label=270: $ $] (eight) {$ $}
                    node at (0,-2*.866) [root,draw,label=270: $ $] (ten) {$ $}		
                    ;
                    \draw[scochain] (two) to [bend right=\bend](six);
                    \draw[scochain] (two) to (seven);
                    \draw[scochain] (two) to (nine);
                    \draw[scochain] (three) to (five);
                    \draw[scochain] (three) to (seven);
                    \draw[scochain] (three) to (eleven);
                    \draw[scochain] (five) to (seven);
                    \draw[scochain] (five) to (twelve);
                    \draw[scochain] (seven) to (nine);
                    \draw[scochain] (seven) to (eleven);
                    \draw[scochain] (seven) to (twelve);
                    \draw[scochain] (eight) to [bend right=\bend](twelve);
                    \draw[scochain] (nine) to (eleven);
        
                    \draw[scochain] (one) to (seven);
                    \draw[scochain] (two) to (eight);
                    \draw[scochain] (three) to (nine);
                    \draw[scochain] (five) to (eleven);
                    \draw[scochain] (six) to (twelve);
                \end{tikzpicture}
                &
                \begin{tikzpicture}[scale=\scaleGG]
                    \path 
                    node at (-1.5,.866) [root,draw,label=90: $\alpha_2$] (six) {$ $}
                    node at (1,0) [root,draw,label=0: $\,\,\alpha_1$] (one) {$ $}
                    node at (-.5,.866) [root,draw,label=90: $ $] (five) {$ $}
                    node at (.5,.866) [root,draw,label=90: $ $] (three) {$ $}
                    node at (1.5,.866) [root,draw,label=90: $ $] (two) {$ $}
                    node at (0,2*.866) [root,draw,label=90: $ $] (four) {$ $}
                    node at (1.5,-.866) [root,draw,label=270: $ $] (twelve) {$ $}
                    node at (-1,0) [root,draw,label=180: $ $] (seven) {$ $}
                    node at (.5,-.866) [root,draw,label=270: $ $] (eleven) {$ $}
                    node at (-.5,-.866) [root,draw,label=270: $ $] (nine) {$ $}
                    node at (-1.5,-.866) [root,draw,label=270: $ $] (eight) {$ $}
                    node at (0,-2*.866) [root,draw,label=270: $ $] (ten) {$ $}		
                    ;
                    \draw[scochain] (two) to (seven);
                    \draw[scochain] (two) to (nine);
        
                    \draw[scochain] (three) to (seven);
                    \draw[scochain] (three) to (eleven);
        
                    \draw[scochain] (four) to [bend right=\bend](eight);
                    \draw[scochain] (four) to (nine);
                    \draw[scochain] (four) to (eleven);
                    \draw[scochain] (four) to [bend left=\bend](twelve);
        
                    \draw[scochain] (five) to (seven);
                    \draw[scochain] (five) to (twelve);
        
                    \draw[scochain] (seven) to (nine);
                    \draw[scochain] (seven) to (eleven);
                    \draw[scochain] (seven) to (twelve);
        
                    \draw[scochain] (eight) to [bend right=\bend](twelve);
        
                    \draw[scochain] (nine) to (eleven);
        
                    \draw[scochain] (one) to (seven);
                    \draw[scochain] (two) to (eight);
                    \draw[scochain] (three) to (nine);
                    \draw[scochain] (four) to (ten);
                    \draw[scochain] (five) to (eleven);
                    \draw[scochain] (six) to (twelve);
                \end{tikzpicture}
            \end{array}
        \end{array}$
    \end{center}

\end{figure}

One can construct a particular automorphic Lie algebra of rank $2$ from Figures \ref{fig:B2} and \ref{fig:G2} as follows. Pick a Lie type $X_2$ and $\PSL(2,\mb C)$ embedding by its labelled Dynkin diagram. Then pick a polyhedral group, and for all its orders $\nu_i$, consider the graph $(\roots,E_i)$ on the root system $\roots$ of type $X_2$ from the figures. Define $\omega^2_i(\alpha,\beta)=1$ if $\{\alpha,\beta\}$ is an edge in $E_i$, and $\omega^2_i(\alpha,\beta)=0$ otherwise. Now all structure constants can be found in Theorem \ref{thm:aliam}

\subsection{The inner regular classification of automorphic Lie algebras of type $A_N$}
\label{sec:A_N}

In this section, we understand an action to be a homomorphism $\rg\to\Aut{\mf{g}}$ from a polyhedral group $\rg$ into the automorphism group of a simple complex Lie algebra $\mf g$.

\subsubsection{Factorisable, regular and inner actions}
An action is called factorisable if it factors through a homomorphism $\PSL(2,\mb C)\to\Aut{\mf{g}}$, it is called regular if zero is the only invariant ($\mf{g}^\rg=\{0\}$), and it is called inner if $\rg$ is mapped into $\Aut{\mf{g}}^0$, the connected component of the identity.

Automorphic Lie algebras have been intensively studied for inner regular actions \cite{knibbeler2017higher}. There, it is explained that these actions on Lie algebras $\slnc[N+1]$ of type $A_N$ (also denoted $\mf g (A_N)$) are provided by irreducible representations of the binary polyhedral groups of dimension $N+1$, and the combinations of $\rg$ and $N$ which allow an inner regular action are those given in Table \ref{tab:BRG irrep dimensions}.
\begin{table}[ht]
    \caption{Inner regular actions on $\mf g(A_N)$}
    \label{tab:BRG irrep dimensions}
    \begin{center}
        \begin{tabular}{ll}
            $\rg$&$N$\\
            \hline
            $\cg{n}$ & -\\
            $\dg{n\ge 2}$&$1$\\
            $\tg$&$1,2$\\
            $\og$&$1,2,3$\\
            $\yg$&$1,2,3,4,5$
        \end{tabular}
    \end{center}
\end{table}

We will compare this collection of actions with the factorisable actions used in this paper, and find the following curious result. It tells us that all but one of the automorphic Lie algebras in \cite{knibbeler2017higher} are reproduced in this paper.
\begin{Proposition}
    \label{prop:one inner regular action not factorisable}
There is precisely one regular inner action on $\mf g(A_N)$ which is not factorisable. This action is provided by the four dimensional representation of the icosahedral group.
\end{Proposition}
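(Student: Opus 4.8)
The plan is to turn factorisability of an inner action into a statement about symmetric powers of the defining $2$-dimensional representation, and then to run through the (finite) list of inner regular actions recorded in Table~\ref{tab:BRG irrep dimensions} and classified in \cite{knibbeler2017higher}. As recalled before Table~\ref{tab:BRG irrep dimensions}, an inner regular action of $\brg$ on $\mf g(A_N)=\slnc[N+1]$ amounts to an $(N+1)$-dimensional irreducible representation $V$ of $\brg$, acting through $\brg\to\PGL(N+1,\mb C)=\mathrm{Inn}(\slnc[N+1])$, regularity being automatic by Schur. The induced action of $\rg=\brg/\{\pm\mathrm{Id}\}$ is factorisable precisely when it extends along $\rg\hookrightarrow\PSL(2,\mb C)$ to a homomorphism $\PSL(2,\mb C)\to\PGL(N+1,\mb C)$. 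Since $\SLNC[2]$ is simply connected and $\PGL(N+1,\mb C)=\SLNC[N+1]/Z$, such a homomorphism lifts to a genuine $(N+1)$-dimensional representation of $\SLNC[2]$, hence to a sum $\bigoplus_j\mathrm{Sym}^{m_j}\mb C^2$; matching it with $V$ up to a one-dimensional character of $\brg$ (their projectivisations must agree) and using irreducibility of $V$, the sum collapses to a single summand. So the action is factorisable if and only if $\mathrm{Sym}^N\mb C^2|_\brg$ is irreducible and $V\cong\mathrm{Sym}^N\mb C^2|_\brg\otimes\chi$ for some $\chi\in\Hom(\brg,\mb C^\ast)$.

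\textbf{Symmetric powers and the table.} Next I would record, via the McKay correspondence and Kostant's generating functions (Appendix~\ref{sec:kostant}), for which $N$ the restriction $\mathrm{Sym}^N\mb C^2|_\brg$ is irreducible: $\mb C^2|_\brg$ is the node of the affine Dynkin diagram adjacent to the affine node, $\mathrm{Sym}^N\mb C^2$ is the new constituent of $(\mb C^2)^{\otimes N}$ obtained by walking the McKay graph, and it stays irreducible exactly while $N+1$ does not exceed the largest Coxeter label. For $N\geq1$ this happens for $N\leq1$ when $\rg=\dg n$, $N\leq2$ when $\rg=\tg$, $N\leq3$ when $\rg=\og$, and $N\leq5$ when $\rg=\yg$ --- exactly the ranges of $N$ in Table~\ref{tab:BRG irrep dimensions}. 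In particular, for every pair $(\rg,N)$ admitting an inner regular action, $\mathrm{Sym}^N\mb C^2|_\brg$ is already an irreducible representation of the right dimension and its action is factorisable; factorisability can fail only if $\brg$ carries a \emph{second} irreducible of dimension $N+1$ lying outside the character--twist class of $\mathrm{Sym}^N\mb C^2|_\brg$.

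\textbf{Case check.} The cyclic groups contribute nothing. For $\rg=\dg n,\tg,\og$ I would go through the short list in \cite{knibbeler2017higher}: for $\tg$ the three $2$-dimensional irreducibles of $\bt$ are the single $\mathrm{Sym}^1\mb C^2|_{\bt}$ twisted by $\Hom(\bt,\mb C^\ast)\cong\cg3$ and the $3$-dimensional one is unique $(=\mathrm{Sym}^2\mb C^2|_{\bt})$; for $\og$ the faithful $2$- and $3$-dimensional actions are $\mathrm{Sym}^1,\mathrm{Sym}^2$ twisted by $\Hom(\bo,\mb C^\ast)\cong\cg2$ and the $4$-dimensional irreducible of $\bo$ is unique $(=\mathrm{Sym}^3\mb C^2|_{\bo})$; for $\dg n$ one checks directly that the action on $\mf g(A_1)$ is the defining one --- so all of these are factorisable. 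For $\rg=\yg$, where $\by$ is perfect, the twist class of $\mathrm{Sym}^N\mb C^2|_{\by}$ is a single irreducible, and as $N$ runs through $1,\dots,5$ these are the irreducibles of dimensions $2,3,4,5,6$, the $4$-dimensional one being the \emph{faithful} one (since $-\mathrm{Id}$ acts on $\mathrm{Sym}^3\mb C^2$ by $(-1)^3$, so it does not descend to $\by/\{\pm\mathrm{Id}\}\cong\yg$). The $5$- and $6$-dimensional irreducibles of $\by$ are unique, so those actions are factorisable; the second $2$- and $3$-dimensional irreducibles of $\by$ are carried onto $\mathrm{Sym}^1\mb C^2|_{\by},\mathrm{Sym}^2\mb C^2|_{\by}$ by the outer automorphism of $\by$ covering $\mathrm{Out}(\yg)$, hence yield isomorphic automorphic Lie algebras; but the second $4$-dimensional irreducible of $\by$ is the pull-back of the unique $4$-dimensional representation of $\yg\cong A_5$, which is $\mathrm{Out}(A_5)$-invariant and is not a symmetric power --- indeed $\PSL(2,\mb C)$ has no $4$-dimensional irreducible representation at all --- so its inner regular action on $\mf g(A_3)$ is not factorisable. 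This is the unique exception, provided by the $4$-dimensional representation of the icosahedral group.

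\textbf{Main difficulty.} The delicate step is the case check, specifically disentangling the several equal-dimensional irreducibles of the binary polyhedral groups and showing that, apart from the $4$-dimensional representation of $A_5$, each of them yields an automorphic Lie algebra isomorphic to one built from an $\SLNC[2]$-module --- via a character twist or, for $\by$, after applying the outer automorphism, which requires knowing the character groups and $\mathrm{Out}$ of each $\brg$ together with which automorphisms are visible to the construction. The clean structural fact that makes the icosahedral $A_3$-case genuinely exceptional, and which I would isolate at the outset, is that $\mathrm{Sym}^3\mb C^2$ is the unique $4$-dimensional irreducible of $\SLNC[2]$ and is faithful on $\by$, whereas $\yg\cong A_5$ has a $4$-dimensional irreducible that cannot arise this way.
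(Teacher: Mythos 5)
Your argument is correct and reaches the paper's conclusion, but it proves the key reduction by a genuinely different, and cleaner, route. The paper's Lemma \ref{lem:regular action factors through regular representation of SL2} ("regular factorisable $\Rightarrow$ factors through the principal $\mathrm{Sym}^N$") is established by a case-by-case computation: for every non-principal nilpotent orbit in $\slnc[N+1]$, $N\le 5$, it evaluates $\sum_i\dim\mf g^{\langle\gamma_i\rangle}$ from the $\ex\circ k$ tables and the fixed-point formula (\ref{eq:dimension fixed points}) to exclude regularity. You instead note that regularity of an inner action is irreducibility of $V$ (Schur), that a factorisation lifts to an $(N+1)$-dimensional $\SLNC[2]$-module whose restriction to $\brg$ is a character twist of $V$, and that irreducibility collapses the sum of symmetric powers to a single $\mathrm{Sym}^N$ restricting irreducibly — a fact you read off the McKay graph rather than from (\ref{eq:dimension fixed points}). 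Your version is shorter and conceptual; the paper's is more pedestrian but self-contained. The remaining steps — counting equal-dimensional irreducibles modulo character twists (the paper's Lemma \ref{lem:equivalent inner actions}) and eliminating the $4$-dimensional $A_5$-representation because $\mathrm{Sym}^3\mb C^2$ is faithful (spinorial) on $\by$ while that representation kills $-\mathrm{Id}$ (the paper's Lemma \ref{lem:regular action is spinorial iff n is even}) — coincide with the paper's. Two small remarks: the aside that $\PSL(2,\mb C)$ has no $4$-dimensional irreducible is not quite the relevant obstruction, since the lift of a would-be factorisation lives on $\SLNC[2]$; your faithfulness argument is the one that does the work. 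And in disposing of the second $2$- and $3$-dimensional irreducibles of $\by$ you implicitly work up to precomposition with $\mathrm{Out}(\by)$ — but the paper's proof does the same when it invokes "the two nonconjugate embeddings", so you are on equal footing there.
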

We split the proof up into bite-size pieces with the three lemmas to follow. For their formulation we need the notion of the regular action of $\SLNC[2]$ on $\Aut{\mf g}$. Every simple Lie algebra $\mf g$ has an embedding $\PSL(2,\mb C)\to\Aut{\mf g}$ corresponding to the Dynkin diagram with label $2$ at each node. We call this the regular action of $\SLNC[2]$ on $\Aut{\mf g}$, due to the associated regular nilpotent orbit \cite{collingwood1993nilpotent}. If $\mf g=\mf g (A_N)$, it is defined by the irreducible representation of $\mf g (A_1)$ on $\mb C^n$. With the usual bases, the element $\diag(1,-1)$ acts on each simple root vector of $\mf g (A_N)$ with eigenvalue $2$. 
\begin{Lemma}
    \label{lem:regular action factors through regular representation of SL2}
A regular factorisable action on $\mf g(A_N)$ factors through the regular action of $\SLNC[2]$ on $\Aut{\mf g(A_N)}$. 
\end{Lemma}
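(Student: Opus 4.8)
The plan is to combine the inclusion $\rg\subset\PSL(2,\mb C)$ with a computation of the $\slnc[2]$-invariants of $\mf g(A_N)$. A factorisable action extends, along $\rg\hookrightarrow\PSL(2,\mb C)$, to a homomorphism $\bar\rho\colon\PSL(2,\mb C)\to\Aut{\mf g(A_N)}$, and by the discussion in Section~\ref{sec:alias} such a homomorphism corresponds, up to conjugacy, to a nilpotent orbit of $\mf g(A_N)$, hence --- in type $A_N$ --- to a partition $n_1\ge\cdots\ge n_r$ of $N+1$. Concretely this partition records the decomposition of the natural module $\mb C^{N+1}\cong\bigoplus_{k=1}^{r}V_{n_k}$ into irreducible $\slnc[2]$-modules $V_n$ of dimension $n$, and the regular action of $\SLNC[2]$ is the one attached to the single-part partition $(N+1)$, i.e.\ to $\mb C^{N+1}$ being irreducible. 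So it suffices to show that regularity forces $r=1$.

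First I would use that $\mf g(A_N)^{\bar\rho(\PSL(2,\mb C))}\subseteq\mf g(A_N)^{\rg}$, which holds simply because the $\rg$-action factors through $\bar\rho$; regularity then gives $\mf g(A_N)^{\bar\rho(\PSL(2,\mb C))}=\{0\}$. Next I would compute this invariant space. Grouping isomorphic summands as $\mb C^{N+1}\cong\bigoplus_d\mb C^{m_d}\otimes V_d$ with $m_d$ the multiplicity of $V_d$, self-duality of the $V_d$ together with Schur's lemma give $\End(\mb C^{N+1})^{\slnc[2]}\cong\bigoplus_d\End(\mb C^{m_d})$, of dimension $\sum_d m_d^2$. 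The trace functional restricts on these invariants to $(T_d)_d\mapsto\sum_d d\,\tr(T_d)$, a nonzero linear form, so $\dim\mf g(A_N)^{\bar\rho(\PSL(2,\mb C))}=\sum_d m_d^2-1$. This vanishes precisely when one $m_d$ equals $1$ and all others vanish, i.e.\ when $\mb C^{N+1}$ is an irreducible $\slnc[2]$-module; that is the single-part partition $(N+1)$.

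It then follows from the classification above --- equivalently from the theorem of Kostant applied to the principal (regular) nilpotent orbit --- that $\bar\rho$ is conjugate in $\Aut{\mf g(A_N)}$ to the regular action of $\SLNC[2]$, namely the one defined by the irreducible $(N+1)$-dimensional representation of $\mf g(A_1)$; this is the asserted factorisation. I do not expect a genuine obstacle here. The only steps needing care are the bookkeeping relating $\bar\rho$, the $\slnc[2]$-triple and the partition (all contained in Section~\ref{sec:alias}) and the verification that the trace form on $\End(\mb C^{N+1})^{\slnc[2]}$ is nonzero, which is what justifies the ``$-1$'' in the dimension count; everything else is Schur's lemma together with the identification ``regular action $=$ principal nilpotent $=$ irreducible natural module''.
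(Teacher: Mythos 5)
Your argument is correct, and it is genuinely different from (and more uniform than) the one in the paper. The paper proves the lemma by a case-by-case check: for each non-regular nilpotent orbit of $\mf g(A_N)$ with $N\le 5$ and each polyhedral group admitting a regular action, it applies the fixed-point formula $\dim\mf g+2\dim\mf g^\rg=\sum_i\dim\mf g^{\langle\gamma_i\rangle}$ and computes the eigenvalue data $\ex_i\circ k$ on the roots explicitly to show $\sum_i\dim\mf g^{\langle\gamma_i\rangle}>\dim\mf g(A_N)$, forcing $\mf g^{\rg}\ne\{0\}$. You instead observe that $\mf g^{\bar\rho(\PSL(2,\mb C))}\subseteq\mf g^{\rg}$ and compute the left-hand side once and for all by Schur's lemma: $\dim\mf{sl}(N+1)^{\slnc[2]}=\sum_d m_d^2-1$, which vanishes exactly when the natural module is irreducible, i.e.\ when the embedding is the regular one. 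This is shorter, works uniformly in $N$ without consulting the classification of nilpotent orbits case by case, and makes transparent that the conclusion is independent of which finite subgroup $\rg$ one restricts to; the only points to state explicitly are that connectedness of $\PSL(2,\mb C)$ identifies group invariants with $\slnc[2]$-invariants, and that the trace form is nonzero on the invariant endomorphisms (your $\sum_d d\,\tr(T_d)$ computation), which justifies the ``$-1$''. What the paper's heavier computation buys is reuse of its own machinery: the same formula and the explicit $\dim\mf g^{\gamma_i}$ data reappear in the proof of Proposition \ref{prop:one inner regular action not factorisable} to determine which regular actions actually occur, whereas your argument only settles the implication stated in the lemma.
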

\begin{proof}
    Being concerned with the dimension of $\mf g^\rg$ it is natural to compute the character of the action of $\rg$ on $\mf g$. But the construction we are using gives a direct way to find $\mf g^{\gamma_i}$ for each generator $\gamma_i$ of $\rg$, and therefore it is convenient to use the formula
    \begin{equation}
        \label{eq:dimension fixed points}
        \dim\mf g+2\dim\mf g^\rg=\sum \dim\mf g^{\langle \gamma_i\rangle}
    \end{equation}
    that can be found in \cite[Lemma 2.1]{knibbeler2019hereditary}.

    We will do a case by case analysis for all combinations of $\rg$ and $N$ that allow a regular action (cf.~Table \ref{tab:BRG irrep dimensions}). Consider a factorisable action $\rg\to\PSL(2,\mb C)\to\Aut{\mf g(A_N)}$ for one of these cases. We will show that if $\PSL(2,\mb C)\to\Aut{\mf g(A_N)}$ is not regular, then $\sum \dim\mf g(A_N)^{\langle \gamma_i\rangle}>\dim \mf g(A_N)=N(N+2)$. The proof is then finished using (\ref{eq:dimension fixed points}).

    The nonregular actions $\PSL(2,\mb C)\to\Aut{\mf g(A_N)}$ for $N\le 5$ are classified by the following eigenvalues of $\diag(1,-1)$ on the simple root vectors: 
    \begin{align*}
        &(2,0,2), \\
        &(0,2,0), \\
        &(2,0,0,2), \\
        &(2,2,0,2,2), \\
        &(2,0,2,0,2), \\
        &(0,2,0,2,0), \\
        &(2,0,0,0,2), \\
        &(0,0,2,0,0)
    \end{align*}
    as follows from \cite[Section 3.6]{collingwood1993nilpotent}.

    The group generator $\gamma_i$ acts trivially on a CSA of $\mf g$ and with eigenvalues $\ex_i\circ k$ on the roots relative to this CSA. Hence $\dim \mf g ^{\gamma_i}=\rank \mf g + |\ker \ex_i\circ k|$. In some cases, $\ker k$ is big enough to show immediately that $\sum \dim\mf g(A_N)^{\langle \gamma_i\rangle}\ge N(N+2)$ using only that $\ker \ex_i\circ k \supset \ker k$. For instance, $|\ker k_{(0,2,0)}|=4$, so that $\sum \dim\mf g(A_3)^{\langle \gamma_i\rangle}=\sum (\rank \mf g(A_3)+|\ker \ex_i\circ k_{(0,2,0)}|)\ge 3(\rank \mf g(A_3)+|\ker k_{(0,2,0)}|)=3(3+4)=21>15=\dim \mf g(A_5)$. This argument also suffices for $(2,0,0,2)$,  $(2,0,0,0,2)$ and $(0,0,2,0,0)$. The remaining cases rely on $\ex$, and we will write them out.

    The function $k$ on the roots corresponding to the eigenvalues on all root vectors can be presented in a matrix. For instance, 
    $$k_{(2,0,2)}=\begin{bmatrix}\ast&2&2&4\\-2&\ast&0&2\\-2&0&\ast&2\\-4&-2&-2&\ast\end{bmatrix}.$$
    With this notation we can write $\ex\circ k_{(2,0,2)}$ for $\rg=\tg, \og, \yg$ with $\nu_1=3,4,5$ respectively as
    \[
        \ex\circ k_{(2,0,2)}=
        \left(
            \begin{bmatrix}\ast&1&1&2\\\nu_1-1&\ast&0&1\\\nu_1-1&0&\ast&1\\\nu_1-2&\nu_1-1&\nu_1-1&\ast\end{bmatrix}, 
            \begin{bmatrix}\ast&1&1&2\\2&\ast&0&1\\2&0&\ast&1\\1&2&2&\ast\end{bmatrix}, 
            \begin{bmatrix}\ast&1&1&0\\1&\ast&0&1\\1&0&\ast&1\\0&1&1&\ast\end{bmatrix}
        \right),
    \]
    and we can read off that $\sum \dim\mf g(A_3)^{\langle \gamma_i\rangle}=5+5+7$, which is greater than $\dim\mf g(A_3)=15$. 

    The remaining cases, all of which have $\nu_i=(5,3,2)$, are checked similarly as follows. The eigenvalues $\ex\circ k_{(2,2,0,2,2)}$ are
    \[
        \left(
            \begin{bmatrix}
                \ast&1&2&2&3&4\\
                &\ast&1&1&2&3\\
                &&\ast&0&1&2\\
                &&&\ast&1&2\\
                &&&&\ast&1\\
                &&&&&\ast
            \end{bmatrix}, 
            \begin{bmatrix}
                \ast&1&2&2&0&1\\
                &\ast&1&1&2&0\\
                &&\ast&0&1&2\\
                &&&\ast&1&2\\
                &&&&\ast&1\\
                &&&&&\ast
            \end{bmatrix}, 
            \begin{bmatrix}
                \ast&1&0&0&1&0\\
                &\ast&1&1&0&1\\
                &&\ast&0&1&0\\
                &&&\ast&1&0\\
                &&&&\ast&1\\
                &&&&&\ast
            \end{bmatrix}
        \right).
    \]
    This shows that $\sum \dim\mf g(A_5)^{\langle \gamma_i\rangle}=7+11+19=37$, which is greater than $\dim\mf g(A_5)=35$. The eigenvalue $\ex\circ k_{(2,0,2,0,2)}$ are
    \[
        \left(
            \begin{bmatrix}
                \ast&1&1&2&2&3\\
                &\ast&0&1&1&2\\
                &&\ast&1&1&2\\
                &&&\ast&0&1\\
                &&&&\ast&1\\
                &&&&&\ast
            \end{bmatrix}, 
            \begin{bmatrix}
                \ast&1&1&2&2&0\\
                &\ast&0&1&1&2\\
                &&\ast&1&1&2\\
                &&&\ast&0&1\\
                &&&&\ast&1\\
                &&&&&\ast
            \end{bmatrix}, 
            \begin{bmatrix}
                \ast&1&1&0&0&1\\
                &\ast&0&1&1&0\\
                &&\ast&1&1&0\\
                &&&\ast&0&1\\
                &&&&\ast&1\\
                &&&&&\ast
            \end{bmatrix}
        \right).
    \]
    This shows that $\sum \dim\mf g(A_5)^{\langle \gamma_i\rangle}=9+11+17 =37$, also $>35$. Finally, the eigenvalues $\ex\circ k_{(0,2,0,2,0)}$ are
    \[
        \left(
            \begin{bmatrix}
                \ast&0&1&1&2&2\\
                &\ast&1&1&2&2\\
                &&\ast&0&1&1\\
                &&&\ast&1&1\\
                &&&&\ast&0\\
                &&&&&\ast
            \end{bmatrix}, 
            \begin{bmatrix}
                \ast&0&1&1&2&2\\
                &\ast&1&1&2&2\\
                &&\ast&0&1&1\\
                &&&\ast&1&1\\
                &&&&\ast&0\\
                &&&&&\ast
            \end{bmatrix}, 
            \begin{bmatrix}
                \ast&0&1&1&0&0\\
                &\ast&1&1&0&0\\
                &&\ast&0&1&1\\
                &&&\ast&1&1\\
                &&&&\ast&0\\
                &&&&&\ast
            \end{bmatrix}
        \right).
    \]
    This shows that $\sum \dim\mf g(A_5)^{\langle \gamma_i\rangle}=11+11+19=41>35$.
\end{proof}
Due to Lemma \ref{lem:regular action factors through regular representation of SL2} we know that all automorphic Lie algebras from regular factorisable actions on $\mf g (A_N)$ are given by a $2$-cocycle $\omega^2=\sf d \left(\frac{1}{\nu}\circ\ex\circ k\right)$ where $k(\alpha_i)=2$ for each simple root. That is, $k(\alpha)$ is twice the height of $\alpha$.
\begin{Lemma}
    \label{lem:regular action is spinorial iff n is even}
    The regular representation of $\diag(-1,-1)\in\SLNC[2]$ on $\mb C^{N+1}$ is $(-1)^{N}\Id$.
\end{Lemma}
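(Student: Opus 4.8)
The plan is to identify the regular representation with a symmetric power of the standard representation and then evaluate the central element by inspection. Recall, as set up in the paragraph preceding the lemma, that the regular action on $\mf g(A_N)=\slnc[N+1]$ is induced by the unique $(N+1)$-dimensional irreducible representation of $\mf g(A_1)=\slnc[2]$ on $\mb C^{N+1}$; this representation is $\mathrm{Sym}^N(\mb C^2)$, the space of homogeneous polynomials of degree $N$ in two variables $X,Y$, with $\SLNC[2]$ acting by linear substitution of the variables.

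Now $\diag(-1,-1)=-\Id$ lies in $\SLNC[2]$, since its determinant is $1$, and it sends $(X,Y)\mapsto(-X,-Y)$. Hence it sends a homogeneous polynomial $p(X,Y)$ of degree $N$ to $p(-X,-Y)=(-1)^N p(X,Y)$, so it acts on $\mathrm{Sym}^N(\mb C^2)\cong\mb C^{N+1}$ as the scalar $(-1)^N$. Equivalently, one can argue with weights: $\diag(-1,-1)$ is the diagonal torus element $\diag(t,t^{-1})$ evaluated at $t=-1$, the weights of $\mathrm{Sym}^N(\mb C^2)$ are $N,N-2,\dots,-N$, all congruent to $N \bmod 2$, so every weight vector is multiplied by $(-1)^N$. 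Either way we obtain the assertion that the action is $(-1)^N\Id$.

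There is no real obstacle here: the statement is a one-line computation once the representation is recognised as the symmetric power $\mathrm{Sym}^N(\mb C^2)$, which is standard $\slnc[2]$-representation theory. The only point worth spelling out is that $\diag(-1,-1)$ genuinely belongs to $\SLNC[2]$ and to its diagonal torus, which is immediate.
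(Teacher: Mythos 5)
Your proof is correct and, in its weight-based form, essentially identical to the paper's: the paper also observes that $\diag(1,-1)$ acts with eigenvalues $N,N-2,\ldots,-N$ and exponentiates $\pi i\,\diag(1,-1)$ to conclude that $\diag(-1,-1)$ acts as $(-1)^N\Id$. Your first argument via $\mathrm{Sym}^N(\mb C^2)$ and the substitution $p(X,Y)\mapsto p(-X,-Y)$ is an equivalent reformulation, not a genuinely different route.
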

\begin{proof}
    On the algebra level, the regular action sends $\diag(1,-1)$ to 
    \[
        \diag(N,N-2,\ldots,-N).
    \] The implication at the group level is that $\diag(-1,-1)$, which we can write as $\exp(\pi i \diag(1,-1))$, is sent to $\exp (\pi i \diag(N,N-2,\ldots,-N))=(-1)^{(N)}\Id$ (in the language of \cite{knibbeler2019hereditary}, the regular action of $\SLNC[2]$ on $\mb C^{N+1}$ produces only spinorial actions when $N$ is odd, and only nonspinorial actions when $N$ is even).
\end{proof}

\begin{Lemma}
    \label{lem:equivalent inner actions}
    Let $\rho_1:\rg\to\GL(V)$ and $\rho_2:\rg\to\GL(V)$ be representations with characters $\chi_1$ and $\chi_2$ respectively. Let $G=\Aut{\mf{sl}(V)}^0$ and define $\Ad:\GL(V)\to G$ by $\Ad(M)X=MXM^{-1}$.
    Then the representations 
    \[\Ad\circ \rho_i:\rg\to G,\quad i=1,2\] 
    are conjugate in $G$ if and only if $\chi_2=\chi\chi_1$ for some homomorphism $\chi:\rg\to\mb C^\ast$.
\end{Lemma}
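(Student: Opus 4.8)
The plan is to reduce the whole statement to the standard identification of $G=\Aut{\mf{sl}(V)}^0$ with the group of inner automorphisms of $\mf{sl}(V)$, under which $\Ad\colon\GL(V)\to G$ is exactly the canonical surjection with kernel the scalar matrices $\mb{C}^\ast\Id$. Granting this, the assertion becomes a bookkeeping exercise about projective equivalence of the linear representations $\rho_i$, and I would treat the two implications in turn.

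For the implication from $\chi_2=\chi\chi_1$ to conjugacy, I would first invoke the fact that a finite-dimensional complex representation of a finite group is determined up to isomorphism by its character: since the character of $\chi\otimes\rho_1$ is $\gamma\mapsto\chi(\gamma)\chi_1(\gamma)=\chi_2(\gamma)$, there is $M\in\GL(V)$ with $\rho_2(\gamma)=M\bigl(\chi(\gamma)\rho_1(\gamma)\bigr)M^{-1}$ for all $\gamma\in\rg$. Applying $\Ad$ and using that it annihilates the scalar factor $\chi(\gamma)$ yields $\Ad\circ\rho_2=\Ad(M)\,(\Ad\circ\rho_1)\,\Ad(M)^{-1}$, which exhibits the required conjugacy in $G$ (note $\Ad(M)\in G$ by hypothesis on $\Ad$).

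For the converse, I would take a conjugating element $g\in G$ with $g\,(\Ad\circ\rho_1)\,g^{-1}=\Ad\circ\rho_2$, lift it to some $M\in\GL(V)$, and deduce $\Ad\bigl(M\rho_1(\gamma)M^{-1}\bigr)=\Ad\bigl(\rho_2(\gamma)\bigr)$ for every $\gamma$, so that $M\rho_1(\gamma)M^{-1}=c(\gamma)\,\rho_2(\gamma)$ for some scalar $c(\gamma)\in\mb{C}^\ast$. Multiplicativity of $\gamma\mapsto M\rho_1(\gamma)M^{-1}$ and of $\rho_2$ then forces $c(\gamma\delta)\rho_2(\gamma\delta)=c(\gamma)c(\delta)\rho_2(\gamma\delta)$, hence $c\in\Hom(\rg,\mb{C}^\ast)$; taking traces of $M\rho_1(\gamma)M^{-1}=c(\gamma)\rho_2(\gamma)$ gives $\chi_1=c\chi_2$, so $\chi:=c^{-1}\in\Hom(\rg,\mb{C}^\ast)$ satisfies $\chi_2=\chi\chi_1$.

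The only genuinely structural input, and hence the step I would be most careful about, is the first one: that $\Aut{\mf{sl}(V)}^0$ coincides with $\{\Ad(M):M\in\GL(V)\}$, so that the conjugating element $g$ indeed lifts to $\GL(V)$. This relies on the simplicity of $\mf{sl}(V)$ (so on $\dim V\ge 2$, which holds throughout our setting $\mf g=\mf g(A_N)$ with $N\ge 1$) together with the standard description of $\Aut$ of $\mf{sl}_n$ as $\PGL(V)$ extended by diagram automorphisms, whose identity component is the inner part; everything else is elementary manipulation with scalar matrices and the character theory of finite groups.
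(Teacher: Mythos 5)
Your proof is correct and follows essentially the same route as the paper: reduce everything to projective equivalence via the fact that $\Ad$ has kernel $\mb{C}^\ast\Id$, use ``equal characters $\Rightarrow$ equivalent representations'' for the forward direction, and lift the conjugating element and extract a scalar cocycle $c\in\Hom(\rg,\mb{C}^\ast)$ for the converse. The one point you flag as delicate --- that the conjugator $g\in G=\Aut{\mf{sl}(V)}^0$ lifts to $\GL(V)$ because the identity component consists of inner automorphisms --- is exactly the step the paper uses silently (it writes the conjugator as $\Ad(T)$ with $T\in\SL(V)$ without comment), so your explicit justification is a welcome clarification rather than a divergence.
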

\begin{proof}
    Suppose first that $\chi\in\Hom(\rg,\mb C^\ast)$ is such that $\chi_2=\chi\chi_1$. The representation $\rho_2$ is then equivalent to $\chi\rho_1$. That is, there exists $T\in\GL(V)$ such that 
    \[
        \rho_2(\gamma)=T \chi(\gamma)\rho_1(\gamma)T^{-1},\quad \forall \gamma\in\rg.
    \] 
    Applying $\Ad$ to this identity tells us that 
    \[
        \Ad(\rho_2(\gamma))=\Ad(T) \Ad(\rho_1(\gamma))\Ad(T)^{-1},\quad\forall\gamma\in\rg,
    \] 
    since $\Ad$ is a homomorphism with kernel consisting of all multiples of the identity. Given that $\Ad(T)\in G$, we have shown that $\Ad\circ \rho_1$ and $\Ad\circ \rho_2$ are indeed conjugate in $G$.

    Suppose now that $\Ad\circ \rho_i:\rg\to G$ for $i=1,2$ are conjugate in $G$, so that $\Ad(\rho_2(\gamma))=\Ad(T) \Ad(\rho_1(\gamma))\Ad(T)^{-1}$ for some $T\in\SL(V)$ and all $\gamma \in\rg$. Then 
    \[
        \Ad(\rho_2(\gamma)T\rho_1(\gamma)^{-1}T^{-1})=1,\quad\forall\gamma\in\rg,
    \] and therefore
    \[ 
        \rho_2(\gamma)T\rho_1(\gamma)^{-1}T^{-1}=\chi(\gamma)\id
    \] 
    for some map $\chi:\rg\to\mb C^\ast$, which is readily checked to be a homomorphism. Rewriting to $\rho_2(\gamma)=T\chi(\gamma)\rho_1(\gamma)T^{-1}$ shows that $\rho_2$ and $\chi\rho_1$ are equivalent representations, so that $\chi_2=\chi\chi_1$.
\end{proof}
\begin{Remark}
    The two $4$-dimensional irreducible representations of the binary icosahedral group provide an example of representations which are conjugate in $\GL(\mf{sl} (V))$ but not in $\Aut{\mf{sl}(V)}^0$. 
\end{Remark}

\begin{proof}[Proof of Proposition \ref{prop:one inner regular action not factorisable}]
The classification of inner regular actions on $\mf g(A_N)$ can be found in \cite[Tables 1-6]{knibbeler2017higher}. They are given by conjugation with irreducible representations of the binary polyhedral groups. When we apply Lemma \ref{lem:equivalent inner actions} we see that there is a unique regular inner action of the tetrahedral group at $N=1$ and $N=2$, and a unique regular inner action of the octahedral group at $N=1, 2$ and $3$. The icosahedral group has two regular inner actions at $N=1, 2, 3$, and unique regular inner actions at $N=4, 5$.

Using (\ref{eq:dimension fixed points}), we can check that the regular action of $\SLNC[2]$ on $\Aut{\mf g(A_N)}$ provides regular actions of the polyhedral groups, precisely at the dimensions we know them to exist. Moreover, no other actions of $\SLNC[2]$ on $\Aut{\mf g(A_N)}$ result in regular actions of polyhedral groups, due to Lemma \ref{lem:regular action factors through regular representation of SL2}.

Using the two nonconjugate embeddings of the binary icosahedral group in $\SLNC[2]$, we obtain the two inner regular actions of $\yg$ on $\mf g(A_1)$ and $\mf g(A_2)$, but at $N=3$ we only obtain one of the two. Hence, the other one is not factorisable.

There is precisely one even dimensional irreducible representation of a polyhedral group in the special linear group. This is the four dimensional irreducible representation of the icosahedral group. By Lemma \ref{lem:regular action is spinorial iff n is even}, this representation does not factor through the regular representation of $\SLNC[2]$, and we have identified its adjoint action as the unique nonfactorisable regular inner action.
\end{proof}

\subsubsection{Integrals of $2$-forms in normal form}
\label{sec:integrals of 2-forms in normal form}
When a Lie algebra is given by the brackets in Theorem \ref{thm:aliam} for some $2$-cocycle $\omega^2=\sf d \omega^1$, then we can think of $\omega^2$ as the descriptor of the Lie algebra and its integral $\omega^1$ as the descriptor of a concrete version for this Lie algebra, in the following sense. If $\{H_i, A_\alpha, i=1,\ldots,N, \alpha\in\roots\}$ is a concrete Chevalley basis of $\mf{g}$, then $\{H_i, \mb{I}^{\omega^1(\alpha)}A_\alpha, i=1,\ldots,N, \alpha\in\roots\}$ is a concrete basis of the automorphic Lie algebra in Theorem \ref{thm:aliam}. Besides concreteness, working with $\omega^1$ rather than $\omega^2$ has the additional advantage that the domain of the function is much smaller.

It is important to note in this context that $\sf d$ has a nontrivial kernel consisting of additive functions on the root system. Hence, if $\omega^1$ is an integral for $\omega^2$, then so is $\omega^1+L$ for any map $L$ on $\roots$ satisfying $L(\alpha+\beta)=L(\alpha)+L(\beta)$ for all $\alpha, \beta, \alpha+\beta\in\roots$. In order to work with this freedom, it is useful to define a normal form for the $1$-forms $\omega^1$. The freedom to choose an integral will also be used to avoid fractional powers of $\mb I$.

\begin{Definition}[Normal form of an integral of $\omega^2$]
    \label{def:normal form om1}
    Let $\omega^2$ be a $2$-cocycle on the root system of the form (\ref{eq:2cocycles})).
    A $1$-form $\omega^1$ on this root system is an integral for $\omega^2$ in normal form if 
    \begin{enumerate}[label=(\roman*)]
        \item $\sf d\omega^1=\omega^2$, 
        \item \label{it:integral natural values} $\omega^1(\alpha)\in\{0,1\}$ for all $\alpha\in\roots$, when the Lie type is $A_N$, and $\omega^1(\alpha)\in\{-1,0,1\}$ for other Lie types,
        \item the values $\omega^1(\alpha_1), \omega^1(\alpha_2), \ldots, \omega^1(\alpha_N)$ on the simple roots have minimal lexicographical order among integrals satisfying the first two conditions.
    \end{enumerate}
\end{Definition}
For the group $\rg=\cg{2n}$, the values of the integral are half the values stated in item \ref{it:integral natural values}. At the end of this subsection, we prove
\begin{Proposition}
    \label{prop:existence and uniqueness normal form}
 The normal form in Definition \ref{def:normal form om1} exists and is unique.
\end{Proposition}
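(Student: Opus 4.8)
The plan is to split the assertion into \emph{uniqueness}, which will follow formally from the structure of the cochain complex, and \emph{existence}, which is the real content. For uniqueness I would argue as follows. The coboundary operator $\mathsf d$ on $1$-cochains has kernel exactly the additive functions on $\roots$, i.e.\ the restrictions to $\roots$ of linear functionals on the root lattice, and such a function is determined by its values on the simple roots $\alpha_1,\dots,\alpha_N$: every positive root $\alpha=\sum_j c_j\alpha_j$ can be built from simple roots one at a time with all partial sums roots, which forces $L(\alpha)=\sum_j c_jL(\alpha_j)$ and $L(-\alpha)=-L(\alpha)$. Hence the difference of two integrals of $\omega^2$ is additive, so $\omega^1\mapsto(\omega^1(\alpha_1),\dots,\omega^1(\alpha_N))$ is injective on the set of all integrals, and a fortiori on those satisfying (i)--(ii). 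The image of the latter lies in the finite set $\{-1,0,1\}^N$ (in $\{0,1\}^N$ for type $A_N$, and in the halved versions for $\rg=\cg{2n}$), so once it is shown to be nonempty it has a unique lexicographically smallest element, and the integral with that tuple of simple-root values is the unique $1$-form satisfying (i)--(iii). Thus uniqueness reduces to existence.

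For existence I would work with a concrete reference integral. Componentwise $\omega^2_i=\mathsf d\phi_i$ with $\phi_i=\tfrac1{\nu_i}(\ex\circ k)_i$, and the identity $\ex(k)_i=k/2+p_i\nu_i$ from the proof of Theorem~\ref{thm:invariant forms} gives $\phi_i=\tfrac{k}{2\nu_i}-\lfloor\tfrac{k}{2\nu_i}\rfloor$; since $\tfrac{k}{2\nu_i}$ is additive and $k$ takes even values here, $\psi_i:=-\lfloor\tfrac{k}{2\nu_i}\rfloor$ is an integer-valued integral of $\omega^2_i$. Every integral is then $\psi_i+L$ for a linear functional $L$, and an elementary estimate shows that $\psi_i+L$ is valued in $\{-1,0,1\}$ (with the strict window $\{0,1\}$ when the Lie type is $A_N$) as soon as $L$ is integer-valued and $|L(\alpha)-\tfrac{k(\alpha)}{2\nu_i}|\le 1$ for all $\alpha\in\roots$ (respectively $<1$ for type $A_N$). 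So it remains to produce such an $L$. If $\nu_i\ge k(\tilde\alpha)/2+1$ one takes $L=0$: then $\psi_i$ is already valued in $\{0,1\}$, and this is precisely the range of $\nu_i$ that Corollary~\ref{cor:isomorphic alias} declares irrelevant. Otherwise $2\le\nu_i\le k(\tilde\alpha)/2$, leaving only finitely many Dynkin gradings per Lie type; here $\tfrac{k(\alpha_j)}{2\nu_i}\in[0,\tfrac12]$ on each simple root, and I would choose the integer values $L(\alpha_j)\in\{0,1\}$ so that $L$ carries about half the coefficient mass of each root --- for instance according to one class of the two-colouring of the (necessarily bipartite) Dynkin diagram --- and verify the discrepancy bound. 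For type $A_N$ this recovers the normal forms of \cite{knibbeler2017higher}; for the rank-$2$ types it is read off from Figures~\ref{fig:B2} and~\ref{fig:G2}, for the exceptional types from the tables of Section~\ref{sec:examples}, and the classical families $B_N,C_N,D_N$ are handled directly in the same way. The group $\rg=\cg{2n}$ is identical with every value halved, as recorded in Definition~\ref{def:normal form om1}.

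The step I expect to be the main obstacle is this last verification: establishing, uniformly across Lie types and Dynkin gradings, that the chosen $L$ rounds $\tfrac{k}{2\nu_i}$ to within $1$ on every root --- equivalently, that the coefficient mass of each root splits between the two colour classes of the Dynkin diagram with discrepancy at most $2$ (at most $1$ for type $A_N$, where the target window is tighter). This is a finite but genuinely case-by-case combinatorial fact about root systems, playing here the role of the normal-form bookkeeping carried out for type $A_N$ in \cite{knibbeler2017higher}.
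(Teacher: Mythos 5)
Your uniqueness argument is correct and is essentially the one in the paper: the kernel of $\mathsf d$ consists of additive functions, these are determined by their values on simple roots, and lexicographic minimality then pins down the integral. The reduction of existence to producing a single integral valued in $\{0,1\}$ (resp.\ $\{-1,0,1\}$) is also fine, as is your observation that every integral has the form $\psi+L$ with $\psi=-\lfloor k/(2\nu_i)\rfloor$ and $L$ a linear functional, so that existence amounts to finding an integer-valued $L$ on the root lattice with $|L(\alpha)-k(\alpha)/(2\nu_i)|\le 1$ for all $\alpha\in\roots$ (strictly less than $1$ on one side for type $A_N$).

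The gap is exactly where you say it is, and it is a genuine one: the construction of that $L$ is the entire content of the existence statement, and your proposal does not supply it. The bipartite-colouring recipe only targets the case $\nu_i=2$ with the regular grading, where $k(\alpha)/(2\nu_i)$ is half the height; for $\nu_i=3,4,5$ and for the non-regular Dynkin gradings the quantity to be rounded is not ``half the coefficient mass,'' a $\{0,1\}$-valued assignment on the nodes cannot in general approximate it, and no uniform choice is proposed. Declaring the remaining verification ``finite but case-by-case'' is not a proof, and it is precisely the case-by-case work the paper avoids. The paper's route is worth internalising: the admissible $L$ form a coset of the coweight lattice $P$ (shifted by the fractional values on a base), and one quotients by the extended affine Weyl group $W\ltimes P$ --- the $W$-freedom corresponding to the choice of base of $\roots$ --- to land $L$ in the closure of the fundamental alcove, where $|\langle\alpha,L\rangle|\le\langle\tilde\alpha,L\rangle\le 1$ holds for every root with no computation. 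This settles all types except $A_N$ in one stroke; for $A_N$ the needed strict inequality $\langle\tilde\alpha,L\rangle<1$ is obtained because $\pi_1=P/Q\cong\zn{(N+1)}$ acts transitively on the $N+1$ vertices of the alcove, so $L$ can be moved off the wall $\langle\tilde\alpha,x\rangle=1$ --- and this is also why the sharper window $\{0,1\}$ is available only for type $A_N$. To complete your argument you would either have to import this alcove geometry or actually carry out the discrepancy estimate for every Lie type, every Dynkin grading and every $\nu_i\le k(\tilde\alpha)/2$, which you have not done.
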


\begin{Example}
    \label{ex:normal form}
    We revisit the automorphic Lie algebra of Example \ref{ex:explicit invariant matrices} and Example \ref{ex:graph} defined by an icosahedral action on $\slnc[3]$. This Lie algebra is given by the $2$ form $\omega^2=\sf d \theta$ where 
    \begin{align*}
        \theta(\alpha_1)&=(1/5, 1/3, 1/2)\\
        \theta(\alpha_2)&=(1/5, 1/3, 1/2)\\
        \theta(\alpha_1+\alpha_2)&=(2/5, 2/3, 0)\\
        \theta(-\alpha_1)&=(4/5, 2/3, 1/2)\\
        \theta(-\alpha_2)&=(4/5, 2/3, 1/2)\\
        \theta(-\alpha_1-\alpha_2)&=(3/5, 1/3, 0).\\
    \end{align*}
    If we add the additive map $L$ to $\theta$ defined by 
    \[
        L(\alpha_1)=(-1/5, -1/3, -1/2),\quad L(\alpha_2)=(-1/5, -1/3, 1/2),
    \]
    we obtain a normal form $\omega^1$ for $\omega^2$. To be explicit, we get 
    \begin{align*}
        \omega^1(\alpha_1)&=(0, 0, 0)\\
        \omega^1(\alpha_2)&=(0, 0, 1)\\
        \omega^1(\alpha_1+\alpha_2)&=(0, 0, 0)\\
        \omega^1(-\alpha_1)&=(1,1,1)\\
        \omega^1(-\alpha_2)&=(1,1,0)\\
        \omega^1(-\alpha_1-\alpha_2)&=(1, 1, 0).\\
    \end{align*}
\end{Example}
\begin{Notation}
    A normal form $\omega^1$ of type $A_N$ can conveniently be presented in a matrix
    \[
        \begin{bmatrix}
            \ast&\mb I^{\omega^1(\alpha_1)}&\mb I^{\omega^1(\alpha_1+\alpha_2)}&\ldots&\mb I^{\omega^1(\tilde{\alpha})}&\\
            \mb I^{\omega^1(-\alpha_1)}&\ast&\mb I^{\omega^1(\alpha_2)}&\ddots&\vdots\\
            \mb I^{\omega^1(-\alpha_1-\alpha_2)}&\mb I^{\omega^1(-\alpha_2)}&\ast&\ddots&\vdots\\
            \vdots&\ddots&\ddots&\ddots&\mb I^{\omega^1(\alpha_N)}\\
            \mb I^{\omega^1(-\tilde{\alpha})}&\vdots&\vdots&\mb I^{\omega^1(-\alpha_N)}&\ast
        \end{bmatrix}.
    \]
    The normal form of Example \ref{ex:normal form} is written as 
    \[\begin{bmatrix}
        \ast&1&1\\
        \mb I\mb J\mb K&\ast&\mb K\\
        \mb I\mb J & \mb I \mb J&\ast
    \end{bmatrix}.\]
\end{Notation}

\begin{Theorem}
    All automorphic Lie algebras of type $A_N$ defined by a factorisable regular inner action of a polyhedral group are described by a normal form in (\ref{eq:inner regular classification type A_N}).
\end{Theorem}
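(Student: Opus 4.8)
The plan is to deduce the stated classification from the structural results already in place, reducing it to a finite and explicit computation. First I would apply Proposition~\ref{prop:one inner regular action not factorisable} to fix the list of actions that must be treated: the factorisable regular inner actions on $\mf g(A_N)$ are exactly those tabulated in Table~\ref{tab:BRG irrep dimensions}, save for the single nonspinorial four-dimensional irreducible representation of the icosahedral group. Next, Lemma~\ref{lem:regular action factors through regular representation of SL2} allows me to replace each of these actions by the regular action of $\SLNC[2]$ on $\Aut{\mf g(A_N)}$, so that the $\SLNC[2]$-grading $k$ is twice the height function on the root system of $A_N$; in particular $k(\tilde{\alpha})/2+1=N+1$, and $\ex\circ k$ on the root system is determined by the height function together with Table~\ref{tab:exponents}.

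With this reduction, Theorem~\ref{thm:aliam} tells me that the automorphic Lie algebra is completely encoded by the symmetric $2$-cocycle $\omega^2=\mathsf d\!\left(\frac{1}{\nu}\circ\ex\circ k\right)$, and Proposition~\ref{prop:existence and uniqueness normal form} tells me it is equivalently encoded by the unique normal-form integral $\omega^1$ of Definition~\ref{def:normal form om1}. So the remaining work is finite and mechanical: for each polyhedral group $\rg$, each admissible $N$, and each choice of the orbit $S$ carrying the poles, I would (i) write down $\ex\circ k$ on every root, (ii) apply the coboundary $\mathsf d$ componentwise, and (iii) add a suitable additive map $L$ on the root system to bring the integral into normal form, exactly as in Example~\ref{ex:normal form}; the collection of resulting matrices is the display~(\ref{eq:inner regular classification type A_N}). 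To see the display is exhaustive I would collapse the obvious redundancies: Corollary~\ref{cor:isomorphic alias} identifies pole choices giving the same multiset of values $\min\{\nu_i,N+1\}$ over orbifold points in the holomorphic domain, and Lemma~\ref{lem:equivalent inner actions} identifies character-twisted representations within a fixed pair $(\rg,N)$. As a final consistency check, Proposition~\ref{prop:one inner regular action not factorisable} guarantees that all but one of the type-$A_N$ automorphic Lie algebras classified in~\cite{knibbeler2017higher} reappear here, so one can match~(\ref{eq:inner regular classification type A_N}) against the normal forms computed there.

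The main obstacle is step (iii): choosing uniformly, across the many cases, the additive correction $L$ that realises the lexicographically minimal integral required by Definition~\ref{def:normal form om1}, while keeping the bookkeeping over roots and simple-root labels consistent with the conventions of~\cite{bourbaki2002lie}. The halving prescribed for $\rg=\cg{2m}$ does not intervene, since $\cg{n}$ admits no inner regular action of type $A_N$. A secondary point demanding care lies in the first paragraph: one must check that the reduction to the regular $\SLNC[2]$-action correctly separates the two four-dimensional icosahedral representations, which is precisely where the spinoriality computation of Lemma~\ref{lem:regular action is spinorial iff n is even} is used, so that only the nonspinorial representation is excluded.
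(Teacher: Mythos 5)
Your proposal is correct and follows essentially the same route as the paper, whose proof is a single sentence citing Theorem~\ref{thm:aliam}, Lemma~\ref{lem:regular action factors through regular representation of SL2}, and the classification of regular inner actions in \cite{knibbeler2017higher}; you have simply made explicit the intermediate steps (the reduction via Proposition~\ref{prop:one inner regular action not factorisable}, the cocycle computation, and the passage to the unique normal form via Proposition~\ref{prop:existence and uniqueness normal form}). The extra care you take about the four-dimensional icosahedral representation and the inapplicability of the $\cg{2m}$ halving is consistent with the paper's setup.
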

\begin{proof}
    This follows from Theorem \ref{thm:aliam} and Lemma \ref{lem:regular action factors through regular representation of SL2} and the classification of regular inner action found in \cite{knibbeler2017higher}.
\end{proof}

\begin{align}
    \begin{split}
        \label{eq:inner regular classification type A_N}
        &\begin{bmatrix}
            \ast&1\\\mb I\mb J\mb K&\ast
        \end{bmatrix},
        \quad
        \begin{bmatrix}
            \ast&1&1\\
            \mb I\mb J\mb K&\ast&\mb K\\
            \mb I\mb J & \mb I \mb J&\ast
        \end{bmatrix},
        \quad
        \begin{bmatrix}
            \ast&1 &1 &1 \\
            \mb I\mb J\mb K&\ast &\mb K &\mb J \\
            \mb I\mb J&\mb I\mb J &\ast &\mb J \\
            \mb I\mb K&\mb I &\mb I\mb K &\ast
        \end{bmatrix},
        \\
        &
        \begin{bmatrix}
            \ast&1 &1 &1 &1 \\
            \mb I\mb J\mb K&\ast &\mb K &\mb J &\mb K \\
            \mb I\mb J&\mb I\mb J &\ast &\mb J &\mb J \\
            \mb I\mb K&\mb I &\mb I\mb K &\ast &\mb K \\
            \mb I\mb J&\mb I &\mb I &\mb I\mb J &\ast
        \end{bmatrix},
        \quad
        \begin{bmatrix}
            \ast&1 &1 &1 &1 &1\\
            \mb I\mb J\mb K&\ast &\mb K &\mb J &\mb K &\mb I\\
            \mb I\mb J&\mb I\mb J &\ast &\mb J &\mb J &\mb I\\
            \mb I\mb K&\mb I &\mb I\mb K &\ast &\mb K &\mb I\\
            \mb I\mb J&\mb I &\mb I &\mb I\mb J &\ast &\mb I\\
            \mb J\mb K&\mb J &\mb K &\mb J &\mb J\mb K & \ast
        \end{bmatrix}
    \end{split}
\end{align}

\begin{Remark}
    The $2$-cocycles of the automorphic Lie algebras in (\ref{eq:inner regular classification type A_N}) also allow an integral which is invariant under the symmetry of the Dynkin diagram of type $A_N$, whenever $N$ is odd. For $N=3$ and $N=5$ these are
    \[
    \begin{bmatrix}
        \ast&1 &\mb J &1 \\
        \mb I\mb J\mb K&\ast &\mb J\mb K &\mb J \\
        \mb I&\mb I &\ast &1 \\
        \mb I\mb K&\mb I &\mb I\mb J\mb K &\ast
    \end{bmatrix},\quad
    \begin{bmatrix}
        \ast&1 &1 &\mb I &\mb I &1\\
        \mb I\mb J\mb K&\ast &\mb K &\mb I\mb J &\mb I\mb K &\mb I\\
        \mb I\mb J&\mb I\mb J &\ast &\mb I\mb J &\mb I\mb J &\mb I\\
        \mb K&1 &\mb K &\ast &\mb K &1\\
        \mb J&1 &1 &\mb I\mb J &\ast &1 \\
        \mb J\mb K&\mb J &\mb K &\mb I\mb J &\mb I\mb J\mb K & \ast
    \end{bmatrix}.
\]
\end{Remark}
The automorphic Lie algebras with representations (\ref{eq:inner regular classification type A_N}) are isomorphic to the automorphic Lie algebras presented in \cite{knibbeler2017higher} as follows from the classification theorems therein and the results of the present paper. However, we have no direct construction of these isomorphisms.

We finish this subsection with a proof of existence and uniqueness of the normal form. The existence question corresponds to integrability of $2$-cocycles on a root system over the natural numbers. This is a tricky question in general and formulated as an open problem in \cite{knibbeler2020cohomology}. In the proof below we restrict to the $2$-cocycles which are relevant to this paper and prove integrability over the natural numbers for type $A_N$. The remaining Lie types need the value $-1$ for integration.
\begin{proof}[Proof of Proposition \ref{prop:existence and uniqueness normal form}]
Uniqueness can be shown as follows. 
Let $\omega^1$ and $\tilde{\omega}^1$ be two normal forms for $\omega^2$. Then their sequence of values on simple roots is identical due to the uniqueness of the minimum in lexicographical order. That is $\omega^1-\tilde{\omega}^1$ is zero on simple roots. But this function is also additive since $\sf d(\omega^1-\tilde{\omega}^1)=0$. Hence, $\omega^1=\tilde{\omega}^1$.

To show existence of the normal form, we will use the geometry of affine Weyl groups, and refer to \cite{bourbaki2002lie} for the necessary background information.
Let $\omega^1$ be a $1$-form on a root system $\roots$ having values in $[0,1)$, and suppose the derivative $\sf d \omega^1$ takes integer values (such as (\ref{eq:2cocycles})). We aim to show existence of a normal form $\tilde{\omega}^1$ for $\sf d \omega^1$. Put $$\tilde{\omega}^1=\omega^1+L.$$ For type $A_N$ it is sufficient to find a map $L$ such that
\begin{enumerate}[label=(\roman*)]
    \item\label{it:L additive} $L(\alpha+\beta)=L(\alpha)+L(\beta)$ for all $\alpha,\beta,\alpha+\beta\in\roots$,
    \item\label{it:L integer on base} $\omega^1(\alpha_i)+L(\alpha_i)\in\mb Z$, for all $\alpha_i$ in some base of $\roots$,
    \item\label{it:L bounded} $-1<L(\alpha)<1$, $\alpha\in\roots$.
\end{enumerate}
Indeed, property \ref{it:L additive} ensures that $\sf d \tilde{\omega}^1=\sf d \omega^1$. Properties \ref{it:L additive} and \ref{it:L integer on base} together ensure that $\tilde{\omega}^1$ has integer values. To see this, note that $\sf d \tilde{\omega}^1$ has integer values. We could rephrase that as $\tilde{\omega}^1$ being linear modulo $\mb Z$. Likewise, property \ref{it:L integer on base} says $\tilde{\omega}^1$ is zero modulo $\mb Z$ on the simple roots. Hence, the $1$-form is zero modulo $\mb Z$ on all roots.
Finally, property \ref{it:L bounded}, together with the condition $\omega^1(\alpha)\in[0,1)$ , ensures that $-1<\tilde{\omega}^1(\alpha)<2$, hence $\tilde{\omega}^1$ can only take values $0$ or $1$. For the other Lie types, we only need to show that $-1\le L(\alpha)\le 1$ for all $\alpha\in\roots$.

The additivity of $L$, \ref{it:L additive}, implies that $L$ is the restriction of a linear map on the root space. That is, an element of the dual of the root space, which is isomorphic to the Cartan subalgebra (CSA). 

Property \ref{it:L integer on base} is satisfied for one fixed base $\alpha_1,\ldots,\alpha_N$ of $\roots$ precisely when $L$ is an element of the coweight lattice $P$ in this CSA, shifted by $\omega^1$. The coweight lattice is the integer span of fundamental coweights $\bar{w}_1, \ldots,\bar{w}_N$ defined relative to a base of $\roots$ by the condition that $\langle \alpha_i, \bar{w}_j\rangle$ equals $1$ if $i=j$ and $0$ otherwise. In summary, the first two properties are satisfied if
\[L\in P-\sum_{i=1}^N\omega^1(\alpha_i) \bar{w}_i.\]

The freedom to choose any base of $\roots$ means we can work modulo the action of the Weyl group $W$, which acts simply transitively on bases of $\roots$. The affine lattice is also invariant under shifts in $P$. Thus, if $D$ is a fundamental domain of the action of the extended affine Weyl group $W\ltimes P$ on the dual of the root space, there exists $L\in D$ satisfying property \ref{it:L additive} and property \ref{it:L integer on base}.

Now we fix a base and associated fundamental weights $\bar{w}_1, \ldots,\bar{w}_N$ and highest root $\tilde{\alpha}=n_1\alpha_1+\ldots+n_N\alpha_N$. Define the alcove $A$ as the open simplex with vertices 
\[
    0,\bar{w}_1/n_1,\ldots,\bar{w}_N/n_N,
\] or equivalently, all elements $x$ of the dual root space satisfying $\langle \alpha_i,x\rangle>0$ and $\langle\tilde{\alpha},x\rangle<1$. The closure of $A$ is known to be a fundamental domain of the affine Weyl group $W\ltimes Q$, where $Q$ is the integer span of the dual root system (\cite[Ch. VI, \S 2, Proposition 5]{bourbaki2002lie}). 
The quotient $\pi_1=P/Q$ acts on $A$, and a fundamental domain of this action is a fundamental domain of the action of the extended affine Weyl group on the root space. 

At this stage we know that there exists $L'$ in the intersection of the Weyl group orbit of the affine lattice $P-\sum_{i=1}^N\omega^1(\alpha_i) \bar{w}_i$ and the closure of the alcove $A$ which satisfies $-1\le \langle \alpha, L\rangle\le 1$ since $|\langle \alpha, L\rangle|\le \langle \tilde{\alpha},L\rangle$ for all $\alpha \in \roots$. This finishes the proof for all Lie types except $A_N$.
In the $A_N$ case, it remains to find an image $L$ of $L'$ under $\pi_1$ for which $\langle\tilde{\alpha},L\rangle<1$. This is the case for $L'$ itself unless $L'$ is contained in the hyperplane through the nonzero vertices $v_i=\bar{w}_i/n_i$, $i=1,\ldots,N$ of the alcove (the hyperplane $\langle\tilde{\alpha},x\rangle=1$).

The fundamental group $\pi_1$ at Lie type $A_N$ is isomorphic to $\zn{(N+1)}$. Because this group also permutes the vertices $0$ and $v_i$, $i=1,\ldots,N$, of the alcove, we see that the orbit of zero corresponds to all vertices. Suppose $L'$ is contained in the hyperplane through the nonzero vertices $v_1,\ldots,v_N$. Let $v_{i_1},\ldots,v_{i_M}$ be the vertices of a lowest dimensional simplex containing $L'$. Pick $p\in\pi_1$ such that $p v_{i_M}=0$ and put $L=pL'$. If $L$ was contained in the hyperplane through $v_1,\ldots,v_N$, then it would be contained in the simplex with vertices $pv_{i_1},\ldots,pv_{i_{M-1}}$, and $v_{i_1},\ldots,v_{i_{M-1}}$ would be vertices of a simplex containing $L'$, contradicting the minimality of the dimension of such a simplex. Therefore, $L$ is not contained in the hyperplane $\langle\tilde{\alpha},x\rangle=1$ so that $\langle \tilde{\alpha} ,L\rangle<1$ and the normal form exists.

For all Lie types other than $A_N$, we have $|\pi_1|<N+1$, so that we can conclude that there are vertices of the alcove which cannot be mapped to zero by $\pi_1$, and the above argument cannot be applied.
\end{proof}

\begin{Remark}
    On the Lie algebra of type $B_2$ there is a $2$-cocycle which cannot be integrated over the natural numbers, despite this cocycle only taking values $0$ and $1$. See \cite[Example 6.5]{knibbeler2020cohomology}.
    The cocycles for $G_2$, $\nu=2$ and $\nu=5$, shown in Figure \ref{fig:G2} are not integrable over the natural numbers either. This shows the necessity of the value $-1$ in Definition \ref{def:normal form om1}.
\end{Remark}

\subsection{Automorphic Lie algebras of exceptional Lie type}
\label{sec:exceptional aLias}
The structure of automorphic Lie algebras found in Theorem \ref{thm:aliam} has been presented by graphs in Section \ref{sec:graphs} which are convenient in rank $2$, and in Section \ref{sec:A_N} by normal forms which are particularly nice for type $A_N$. But one can also produce a complete list of Lie brackets. To this end, one can conveniently use SageMath \cite{sagemath} because the function $\epsilon$ describing simple Lie algebras is already incorporated in this software. In this section we present the automorphic Lie algebras of all exceptional Lie types with icosahedral symmetry group factoring though the regular embedding $\PSL(2,\mb C)\to \Aut{\mf g}$ by listing the structure constants
\[\epsilon(\alpha,\beta)\mb I^{\omega^2(\alpha,\beta)},\quad \alpha,\beta\in\roots, \alpha+\beta\in\roots\cup\{0\}\]
of a Chevalley normal form, in Tables \ref{tab:structure constants G_2}-\ref{tab:structure constants E_8}. For readability, we will write $\mb I_i$ as $\mb I, \mb J$ and $\mb K$ for the Hauptmoduln vanishing at the exceptional orbits of increasing size, as we did in the previous subsection. The ordering of the roots is copied from the methods in SageMath which were used: 
\[\texttt{RootSystem(['X',N]).root\_space().positive\_roots()}\]
The numbering of the simple roots is as in SageMath and Bourbaki \cite{bourbaki2002lie}, and shown in the Dynkin diagrams below\footnote{The numbering of simple roots used by Kac \cite{kac1990infinite} is different, and \cite{10.1093/imrn/rnab376} follows the numbering of Kac for Lie type $G_2$. In this paper we opted for smooth integration with SageMath over consistency with \cite{kac1990infinite,10.1093/imrn/rnab376}}.

\newcommand{\sagedynkinscale}{0.3}
\begin{align*}
    &
    \begin{tikzpicture}[scale=\sagedynkinscale]
        \draw (-1,0) node[anchor=east] {$G_2$};
        \draw (0,0) -- (2 cm,0);
        \draw (0, 0.15 cm) -- +(2 cm,0);
        \draw (0, -0.15 cm) -- +(2 cm,0);
        \draw[shift={(0.8, 0)}, rotate=180] (135 : 0.45cm) -- (0,0) -- (-135 : 0.45cm);
        \draw[fill=white] (0 cm, 0 cm) circle (.25cm) node[below=4pt]{$1$};
        \draw[fill=white] (2 cm, 0 cm) circle (.25cm) node[below=4pt]{$2$};
    \end{tikzpicture}
    \\
    &
    \begin{tikzpicture}[scale=\sagedynkinscale]
        \draw (-1,0) node[anchor=east] {$F_4$};
        \draw (0 cm,0) -- (2 cm,0);
        \draw (2 cm, 0.1 cm) -- +(2 cm,0);
        \draw (2 cm, -0.1 cm) -- +(2 cm,0);
        \draw (4.0 cm,0) -- +(2 cm,0);
        \draw[shift={(3.2, 0)}, rotate=0] (135 : 0.45cm) -- (0,0) -- (-135 : 0.45cm);
        \draw[fill=white] (0 cm, 0 cm) circle (.25cm) node[below=4pt]{$1$};
        \draw[fill=white] (2 cm, 0 cm) circle (.25cm) node[below=4pt]{$2$};
        \draw[fill=white] (4 cm, 0 cm) circle (.25cm) node[below=4pt]{$3$};
        \draw[fill=white] (6 cm, 0 cm) circle (.25cm) node[below=4pt]{$4$};
    \end{tikzpicture}
    \\
    &
    \begin{tikzpicture}[scale=\sagedynkinscale]
        \draw (-1,0) node[anchor=east] {$E_6$};
        \draw (0 cm,0) -- (8 cm,0);
        \draw (4 cm, 0 cm) -- +(0,2 cm);
        \draw[fill=white] (0 cm, 0 cm) circle (.25cm) node[below=4pt]{$1$};
        \draw[fill=white] (2 cm, 0 cm) circle (.25cm) node[below=4pt]{$3$};
        \draw[fill=white] (4 cm, 0 cm) circle (.25cm) node[below=4pt]{$4$};
        \draw[fill=white] (6 cm, 0 cm) circle (.25cm) node[below=4pt]{$5$};
        \draw[fill=white] (8 cm, 0 cm) circle (.25cm) node[below=4pt]{$6$};
        \draw[fill=white] (4 cm, 2 cm) circle (.25cm) node[right=3pt]{$2$};
    \end{tikzpicture}
    \\
    &
    \begin{tikzpicture}[scale=\sagedynkinscale]
        \draw (-1,0) node[anchor=east] {$E_7$};
        \draw (0 cm,0) -- (10 cm,0);
        \draw (4 cm, 0 cm) -- +(0,2 cm);
        \draw[fill=white] (0 cm, 0 cm) circle (.25cm) node[below=4pt]{$1$};
        \draw[fill=white] (2 cm, 0 cm) circle (.25cm) node[below=4pt]{$3$};
        \draw[fill=white] (4 cm, 0 cm) circle (.25cm) node[below=4pt]{$4$};
        \draw[fill=white] (6 cm, 0 cm) circle (.25cm) node[below=4pt]{$5$};
        \draw[fill=white] (8 cm, 0 cm) circle (.25cm) node[below=4pt]{$6$};
        \draw[fill=white] (10 cm, 0 cm) circle (.25cm) node[below=4pt]{$7$};
        \draw[fill=white] (4 cm, 2 cm) circle (.25cm) node[right=3pt]{$2$};
    \end{tikzpicture}
    \\
    &
    \begin{tikzpicture}[scale=\sagedynkinscale]
        \draw (-1,0) node[anchor=east] {$E_8$};
        \draw (0 cm,0) -- (12 cm,0);
        \draw (4 cm, 0 cm) -- +(0,2 cm);
        \draw[fill=white] (0 cm, 0 cm) circle (.25cm) node[below=4pt]{$1$};
        \draw[fill=white] (2 cm, 0 cm) circle (.25cm) node[below=4pt]{$3$};
        \draw[fill=white] (4 cm, 0 cm) circle (.25cm) node[below=4pt]{$4$};
        \draw[fill=white] (6 cm, 0 cm) circle (.25cm) node[below=4pt]{$5$};
        \draw[fill=white] (8 cm, 0 cm) circle (.25cm) node[below=4pt]{$6$};
        \draw[fill=white] (10 cm, 0 cm) circle (.25cm) node[below=4pt]{$7$};
        \draw[fill=white] (12 cm, 0 cm) circle (.25cm) node[below=4pt]{$8$};
        \draw[fill=white] (4 cm, 2 cm) circle (.25cm) node[right=3pt]{$2$};
    \end{tikzpicture}
\end{align*}

Table \ref{tab:structure constants E_8} is too large to be practically usable, but we include it to show that it can be computed with the approach of this paper, and while doing so, a digital version of the table is created which is much more usable than a printed version.

Due to the large dimensions at Lie type $F_4$, $E_6$, $E_7$ and $E_8$, we present these structure constants for positive roots $\alpha$ and $\beta$ only (and for $\beta=-\alpha$), just as in the work of Vavilov \cite{vavilov2004yourself} where the structure constants $\epsilon$ for type $E_N$ are determined. The remaining structure constants can be deduced from these, using the following known result.
\begin{Lemma}
    \label{lem:transformation on epsilon and omega^2}
    Let $\epsilon$ describe a Chevalley basis of a simple Lie algebra with root system $\roots$ and let $\omega^2$ be a symmetric $2$-cocycle on $\roots$. If $\alpha,\beta,\alpha+\beta\in \roots$ then
    \begin{align*}
        \epsilon(\alpha+\beta,-\alpha)&=-\frac{|\beta|^2}{|\alpha+\beta|^2}\epsilon(\alpha,\beta),\\
        \omega^2(\alpha+\beta,-\alpha)&=\omega^2(\alpha,-\alpha)-\omega^2(\alpha,\beta).
    \end{align*}
\end{Lemma}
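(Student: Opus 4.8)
The plan is to prove the two identities independently: the first is a statement about the structure constants of $\mf g$, the second a formal identity for symmetric $2$-cocycles on the root system.

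I would dispose of the cocycle identity first. Every $2$-cocycle occurring in this paper is a coboundary $\omega^2=\mathsf{d}\omega^1$ with $\omega^1(0)=0$ --- for the cocycles of the form (\ref{eq:2cocycles}) one may take $\omega^1=\frac{1}{\nu}\circ\ex\circ k$, which vanishes at the origin because $k(0)=0$ and $\ex(0)=0$. Then, writing $(\mathsf{d}\omega^1)(\mu,\nu)=\omega^1(\mu)+\omega^1(\nu)-\omega^1(\mu+\nu)$, the right-hand side of the second identity expands to
\[
\bigl(\omega^1(\alpha)+\omega^1(-\alpha)\bigr)-\bigl(\omega^1(\alpha)+\omega^1(\beta)-\omega^1(\alpha+\beta)\bigr)=\omega^1(\alpha+\beta)+\omega^1(-\alpha)-\omega^1(\beta),
\]
and the latter is exactly $\omega^2(\alpha+\beta,-\alpha)$. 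For a reader who wants the statement for an arbitrary symmetric $2$-cocycle rather than a coboundary, I would add the alternative derivation: apply the $2$-cocycle condition to the triple $(-\alpha,\alpha,\beta)$ and use the symmetry of $\omega^2$ together with $\omega^2(0,\cdot)=0$.

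For the identity on $\epsilon$ the plan is to apply the Jacobi identity in $\mf g$ to $A_{\alpha+\beta}$, $A_{-\alpha}$, $A_{-\beta}$, a triple for which $(\alpha+\beta)+(-\alpha)+(-\beta)=0$. Using $[A_\mu,A_{-\mu}]=H_\mu$ and the Chevalley bracket relations, the three summands of the Jacobi identity are $\epsilon(\alpha+\beta,-\alpha)H_\beta$, $-\epsilon(-\alpha,-\beta)H_{\alpha+\beta}$ and $\epsilon(-\beta,\alpha+\beta)H_\alpha$, so
\[
\epsilon(\alpha+\beta,-\alpha)H_\beta-\epsilon(-\alpha,-\beta)H_{\alpha+\beta}+\epsilon(-\beta,\alpha+\beta)H_\alpha=0 .
\]
Expanding $(\alpha+\beta)^\vee$ in terms of $\alpha^\vee,\beta^\vee$ (for a fixed Weyl-invariant form) gives $H_{\alpha+\beta}=\tfrac{|\alpha|^2}{|\alpha+\beta|^2}H_\alpha+\tfrac{|\beta|^2}{|\alpha+\beta|^2}H_\beta$, and $H_\alpha,H_\beta$ are linearly independent since $\alpha,\beta$ are (if $\beta=\pm\alpha$ then $\alpha+\beta\in\{2\alpha,0\}$ is not a root). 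Comparing coefficients of $H_\beta$ yields $\epsilon(\alpha+\beta,-\alpha)=\tfrac{|\beta|^2}{|\alpha+\beta|^2}\epsilon(-\alpha,-\beta)$, after which I would invoke the standard normalisation $\epsilon(-\alpha,-\beta)=-\epsilon(\alpha,\beta)$ of a Chevalley basis (\cite[Section 25]{humphreys1972introduction}, \cite[Chapter VIII, Section 2]{bourbaki2005lie}) to conclude.

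There is no real obstacle --- both facts are classical --- but two points need care. The squared length must be taken with respect to a fixed Weyl-invariant form, so that the coroot expansion of $H_{\alpha+\beta}$ is valid also in the non-simply-laced case where $|\alpha|^2\neq|\beta|^2$; this is precisely where the length ratio in the statement comes from. And the overall sign in the first identity rests entirely on the convention $\epsilon(-\alpha,-\beta)=-\epsilon(\alpha,\beta)$, which is the one used here (and in SageMath); I would flag this so that it is not confused with the Bourbaki-style Chevalley system mentioned in the footnote to Theorem \ref{thm:aliam}, which is normalised differently on exactly these brackets.
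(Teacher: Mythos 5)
Your proof is correct and follows essentially the same route as the paper: the $\epsilon$-identity via the Jacobi identity for a zero-sum triple of root vectors combined with the normalisation $\epsilon(-\alpha,-\beta)=-\epsilon(\alpha,\beta)$, and the $\omega^2$-identity via the cocycle (coboundary) relation on the chain involving $\alpha+\beta,-\alpha,\alpha$ together with symmetry. Your explicit coroot expansion of $H_{\alpha+\beta}$ just makes visible the length-ratio step that the paper packages into the cyclic relation $\epsilon(\alpha,\beta)/|\gamma|^2=\epsilon(\beta,\gamma)/|\alpha|^2=\epsilon(\gamma,\alpha)/|\beta|^2$.
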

\begin{proof}
    The statement concerning $\epsilon$ can be deduced from \cite[Section 4]{vavilov2004yourself} or from \cite{humphreys1972introduction}. First one can show that \[\epsilon(-\alpha,-\beta)=-\epsilon(\alpha,\beta)\] using the isomorphism $H_i\mapsto -H_i$, $A_\alpha\mapsto -A_{-\alpha}$ of the simple Lie algebra (cf.~\cite[Proposition 25.2]{humphreys1972introduction}). Then one can use the Jacobi identity for roots $\alpha, \beta, \gamma$ with $\alpha+\beta+\gamma=0$ to find \[\frac{\epsilon(\alpha,\beta)}{|\gamma|^2}=\frac{\epsilon(\beta,\gamma)}{|\alpha|^2}=\frac{\epsilon(\gamma,\alpha)}{|\beta|^2}\]
    and combine these identities to get 
    \[
        \epsilon(\alpha+\beta,-\alpha)=\epsilon(-\gamma,-\alpha)=-\epsilon(\gamma,\alpha)=\frac{|\beta|^2}{|\alpha+\beta|^2}\epsilon(\alpha,\beta).
    \]

    The statement concerning $\omega^2$ can be found in \cite[Lemma 6.2]{knibbeler2020cohomology} and is obtained from the cocycle identity for the $3$-chain $(\alpha+\beta, -\alpha, \alpha)$ (which also stems from the Jacobi identity) and the symmetry $\omega^2(\gamma,\delta)=\omega^2(\delta,\gamma)$.
\end{proof}
The assignment $(\alpha,\beta)\mapsto(\alpha+\beta,-\alpha)$ defines a map on the set of pairs of roots which add to a root, and this map has order $6$. Each orbit of this map has precisely one element consisting of two positive roots. The values of $\epsilon$ and $\omega^2$ on this pair determine all other values on the orbit due to Lemma \ref{lem:transformation on epsilon and omega^2}. They are listed in Table \ref{tab:values of epsilon and omega^2 on orbit} for convenience.
\begin{table}[ht]
    \caption{Structure constants on the orbit of $(\alpha,\beta)\mapsto(\alpha+\beta,-\alpha)$}
    \label{tab:values of epsilon and omega^2 on orbit}
    \begin{center}

        $
    \end{adjustbox}
    \end{landscape}
    \clearpage
}

\fi

\rowcolors{1}{}{}

\section{Discussion and further research directions}
\label{sec:further research directions}

In hyperbolic geometry we found an intertwiner analogous to (\ref{eq:modautlie}) (cf.~equation (23) in \cite{10.1093/imrn/rnab376}). This 
suggests a duality between
$\frac{X}{Y}$, $-\frac{Y\partial_X P}{\deg(P)P}$ and $\frac{1}{Y}$ on the one hand, and $\tau$, $2\pi i\frac{E_2(\tau)}{12}$ and $\rd \tau^{\frac{1}{2}}$ on the other, respectively. Here $\tau$ is an element of the upper half complex plane and $E_2$ the Eisenstein series. This duality is mysterious to the author, but also fascinating due to the tremendous importance of $E_2$ in the theory of modular forms.

The literature on automorphic Lie algebras has shown that many of these algebras can be described by symmetric $2$-cocycles on a root system with two distinct values. The current paper adds infinitely more such examples. This makes the space $Z^2_+(\roots,\{0,1\})$ of such cocycles an interesting subject of study, were we can answer classification and integration questions, investigate the associated graphs, Lie algebra structures and representations.

Currently, we still lack the ability to prove that two automorphic Lie algebras are not isomorphic over $\mb C$, unless they have nonisomorphic base Lie algebras or distinct abelianisations (cf.~Corollary\ref{cor:abelianisation}), and this prevents us to achieve a complete classification of isomorphism classes. To the best of our knowledge, this is still an open problem for twisted current algebras $\mf g[t]^{\cg{n}}$. 
One approach advised by Erhard Neher is to consider the centroids of the Lie algebras \cite{benkart2006the} and split the isomorphism group into Lie algebra automorphisms over the centroid, and the automorphisms of the centroid over $\mb C$. On can prove that the centroid of an automorphic Lie algebra is the ring of automorphic functions, which makes it plausible that this challenging question can be solved.

Theorem \ref{thm:aliam} provides the structure constants for automorphic Lie algebras of arbitrary simple Lie type. We would like to present this structure in different ways to gain further understanding. One way is through the graphs on root systems depicting the $2$-cocycles as was done in Section \ref{sec:graphs}, but beyond rank $2$ this becomes too complicated to be practical (the PhD thesis \cite{knibbeler2014invariants} of the author contains some such graphs for $A_3$, which are arguable already too complicated to be usable). In \cite{knibbeler2017higher} a model in one matrix was introduced to tackle exactly this problem (see also Section \ref{sec:integrals of 2-forms in normal form}). However, this model only works for Lie type $A_N$ and we do not see a useful generalisation to the other Lie types.
What we would really like is a method to encode the Lie structure of Theorem \ref{thm:aliam} in a Dynkin diagram with a small amount of additional data, but the Lie structure has resisted any attempt to do this so far.

In \cite{knibbeler2019hereditary} we show that any hereditary automorphic Lie algebra has reduction group consisting of inner automorphisms. Now we have Theorem \ref{thm:aliam} to see that any automorphic Lie algebra with factorisable action is hereditary. The notions of inner and factorisable actions are much easier to work with than that of hereditary automorphic Lie algebras. How the latter sit in between these two classes is not yet clear, but it would be interesting to know. There exist inner polyhedral group actions on simple Lie algebras which do not factor through $\SLNC[2]$ (an example is given in Proposition \ref{prop:one inner regular action not factorisable}) but it is not clear how common this is.

\appendix

\section{Case by case analysis}
\label{sec:case by case analysis}
This section provides lists that allow us to construct automorphic Lie algebras of our choice, and also to check Lemma \ref{lem:characters}.
See the lecture notes \cite{dolgachev2009mckay} by Dolgachev, and the work of Lombardo and Sanders \cite{lombardo2010on} for similar lists.

The results in this paper are independent of basis choices. In this section we pick bases in order to provide matrices, but these bases do not have any particular significance.

We provide generators for the groups $\brg$ in $SU(2)$. We study its action on the Riemann sphere $\overline{\mb{C}}=\mb{C}\cup\{\infty\}$ by Moebius transformations, 
rather than the equivalent action on the sphere in $\mb{R}^3$, or the projective line, to keep the text short. 
This action is given by
\[
    \begin{pmatrix}a&b\\c&d\end{pmatrix}\cdot \lambda = \frac{a\lambda+b}{c\lambda+d}.
\]
The map $\mb{C}^2\setminus\{(0,0)\}\to\overline{\mb{C}}$ defined by 
\[
    (X,Y)\mapsto\lambda=
    \left\{
        \begin{array}{ll}
            X/Y&\text{ if }Y\ne 0\\
            \infty&\text{ if }Y=0
        \end{array}
    \right.
\]
commutes with the group actions.

Recall that 
$\poles_i$ is an orbit of $\rg$ with nontrivial stabiliser (in this section the orbit is in the Riemann sphere), $P_i$ is a homogeneous polynomial of $X$ and $Y$ which vanishes precisely at the lines corresponding to the elements of $S_i$, and $\chi_i$ is the character of $\brg$ describing the action of $\brg$ on $P_i$.  

We provide generators $a$ and $b$ of the noncyclic binary polyhedral groups fitting in the uniform description 
\[\brg=\langle a,b,c\,|\,a^{\nu_1}=b^{\nu_2}=c^{\nu_3}=abc\rangle.\]
The roots of unity will be denoted 
\[\zeta_n=e^{\frac{2\pi i}{n}}.\]
The character $\chi_i$ and other elements of $\hombgc$ will be given by their value(s) on the generator(s) $a$ (and $b$ respectively).

\subsection{The cyclic groups $\cg{n}$}
\[
    a=\begin{pmatrix}
        \zeta_{2n}&0\\
        0&\bar{\zeta}_{2n}
    \end{pmatrix}
\]

\begin{center}
    \begin{table}[ht!] 
    \caption{Ground forms of cyclic groups}
        \begin{center}
            \begin{tabular}{lll}
                    $S_i$ & $P_i$ & $\chi_i$ 
                    \\
                    \hline
                    \\[-2mm]
                    $\{0\}$ 
                    & $X$ 
                    & $\zeta_{2n}$
                    \\[3mm]
                    $\{\infty\}$ 
                    & $Y$ 
                    & $\bar{\zeta}_{2n}$
                    \\[3mm]
                    &
                    &$-1$
            \end{tabular}
        \end{center}
    \end{table}
\end{center}
\begin{align*}
    &\ch:\left(\zn{n}\right)^2\to\Hom(\cg{2n},\mb{C}^\ast)/\langle\chi\rangle,\quad 
    (\bar{i},\bar{j})\mapsto\{a\mapsto \zeta_{2n}^{i-j}, a\mapsto -\zeta_{2n}^{i-j}\}
\end{align*}

\subsection{The dihedral groups $\dg{n}$}
\[
    a=\begin{pmatrix}
        \zeta_{2n}&0\\
        0&\bar{\zeta}_{2n}
    \end{pmatrix},\quad 
    b= \begin{pmatrix}
        0&i\bar{\zeta}_{2n}\\
        i\zeta_{2n}&0
    \end{pmatrix}
\]

\begin{center}
    \begin{table}[ht!] 
    \caption{Ground forms of the dihedral groups}
        \begin{center}
            \begin{tabular}{lll}
                    $S_i$ & $P_i$ & $\chi_i$
                    \\
                    \hline
                    \\[-2mm]
                    $\{0,\infty\}$
                    & $XY$
                    & $(1,-1)$
                    \\[3mm]
                    $\{\zeta_{2n}, \zeta_{2n}^3, \zeta_{2n}^5,\ldots,\zeta_{2n}^{2n-1}\}$ 
                    & $X^n+Y^n$
                    & $(-1,-\zeta_4^n)$
                    \\[3mm]
                    $\{1, \zeta_{2n}^2, \zeta_{2n}^4,\ldots,\zeta_{2n}^{2n-2}\}$
                    & $X^n-Y^n$
                    & $(-1,\zeta_4^n)$
            \end{tabular}
        \end{center}
    \end{table}
\end{center}
To write down the map \[\ch:\zn{n}\times\zn{2}\times \zn{2} \to\Hom(\bd{n},\mb{C}^\ast)/\langle\chi\rangle\] we split into two cases. If $n=2m-1$ we have $\chi=(1,-1)$ and
\[
    (\bar{i},\bar{j},\bar{k})\mapsto\{((-1)^{j+k},\zeta_4^{j-k}), ((-1)^{j+k},-\zeta_4^{j-k}).\}
\]
If on the other hand $n=2m$ we find $\chi=(1,1)$ and
\[
    (\bar{i},\bar{j},\bar{k})\mapsto((-1)^{j+k},(-1)^{i+(m+1)j+mk}).
\]

\subsection{The tetrahedral group $\tg$}

\[
    a=\left(\begin{array}{rr}
        \frac{1}{2} \zeta_{12}^{3} + \frac{1}{2} & \frac{1}{2} \zeta_{12}^{3} + \frac{1}{2} \\
        \frac{1}{2} \zeta_{12}^{3} - \frac{1}{2} & -\frac{1}{2} \zeta_{12}^{3} + \frac{1}{2}
        \end{array}\right),\quad
    b=\left(\begin{array}{rr}
        \frac{1}{2} \zeta_{12}^{3} + \frac{1}{2} & \frac{1}{2} \zeta_{12}^{3} - \frac{1}{2} \\
        \frac{1}{2} \zeta_{12}^{3} + \frac{1}{2} & -\frac{1}{2} \zeta_{12}^{3} + \frac{1}{2}
        \end{array}\right)
\]

\begin{center}
    \begin{table}[ht!] 
        \caption{Ground forms of the tetrahedral group}    
        \begin{center}
            \begin{tabular}{lll}
                    $S_i$ & $P_i$ & $\chi_i$
                    \\
                    \hline
                    \\[-2mm]
                    $\{\pm\frac{1-i}{2}(1+\sqrt{3}), \pm\frac{1+i}{2}(1-\sqrt{3})\}$
                    & $2 i \, \sqrt{3} X^{2} Y^{2} + X^{4} + Y^{4}$
                    & $\left( \zeta_{3}, \zeta_3^2 \right)$
                    \\[3mm]
                    $\{\pm\frac{1-i}{2}(1-\sqrt{3}), \pm\frac{1+i}{2}(1+\sqrt{3})\}$
                    & $-2 i \, \sqrt{3} X^{2} Y^{2} + X^{4} + Y^{4}$ 
                    & $\left(\zeta_3^2, \zeta_3 \right)$
                    \\[3mm]
                    $\{0,\infty, \pm 1, \pm i\}$
                    & ${\left(X^{4} - Y^{4}\right)} X Y$
                    & $(1, 1)$
            \end{tabular}
        \end{center}
    \end{table}
\end{center}

\begin{align*}
    &\ch:\zn{3}\times\zn{3}\times \zn{2} \to\Hom(\bt,\mb{C}^\ast),\quad 
    (\bar{i},\bar{j},\bar{k})\mapsto(\zeta_3^{i-j},\zeta_3^{j-i})
\end{align*}

\subsection{The octahedral group $\og$}

\[
    a=\left(\begin{array}{rr}
        \zeta_{24}^{3} & 0 \\
        0 & -\zeta_{24}^{5} + \zeta_{24}
        \end{array}\right),\quad
    b=\left(\begin{array}{rr}
        \frac{1}{2} \zeta_{24}^{6} + \frac{1}{2} & \frac{1}{2} \zeta_{24}^{6} - \frac{1}{2} \\
        \frac{1}{2} \zeta_{24}^{6} + \frac{1}{2} & -\frac{1}{2} \zeta_{24}^{6} + \frac{1}{2}
        \end{array}\right)
\]
\begin{center}
    \begin{table}[ht!] 
        \caption{Ground forms of the octahedral group}
        \begin{center}
            \begin{tabular}{lll}
                    $S_i$ & $P_i$ & $\chi_i$ 
                    \\
                    \hline
                    \\[-2mm]
                    $\{0,\infty, \pm 1, \pm i\}$
                    & ${\left(X^{4} - Y^{4}\right)} X Y$ 
                    & $\left(-1, 1\right)$
                    \\[3mm]
                    $\{\pm\frac{1\pm i}{2}(1\pm \sqrt{3})\}$
                    & $X^{8} + 14 \, X^{4} Y^{4} + Y^{8}$
                    & $\left(1, 1\right)$
                    \\[3mm]
                    $\{\zeta_{8}, \ldots\}$
                    & $-{\left(36 \, X^{4} Y^{4} - {\left(X^{4} + Y^{4}\right)}^{2}\right)} {\left(X^{4} + Y^{4}\right)}$
                    & $\left(-1, 1\right)$
            \end{tabular}
        \end{center}
    \end{table}
\end{center}

\begin{align*}
    &\ch:\zn{4}\times\zn{3}\times \zn{2} \to\Hom(\bo,\mb{C}^\ast),\quad 
    (\bar{i},\bar{j},\bar{k})\mapsto((-1)^{i+k},1)
\end{align*}

\subsection{The icosahedral group $\yg$}

\begin{align*}
    &a=\left(\begin{array}{rr}
        \zeta_{20}^{2} & 0 \\
        0 & -\zeta_{20}^{6} + \zeta_{20}^{4} - \zeta_{20}^{2} + 1
        \end{array}\right),\\
    &b=\left(\begin{array}{rr}
        \frac{2}{5} \zeta_{20}^{6} + \frac{1}{5} \zeta_{20}^{4} + \frac{1}{5} \zeta_{20}^{2} + \frac{2}{5} & -\frac{1}{5} \zeta_{20}^{6} + \frac{2}{5} \zeta_{20}^{4} + \frac{2}{5} \zeta_{20}^{2} - \frac{1}{5} \\
        \frac{4}{5} \zeta_{20}^{6} - \frac{3}{5} \zeta_{20}^{4} + \frac{2}{5} \zeta_{20}^{2} - \frac{1}{5} & -\frac{2}{5} \zeta_{20}^{6} - \frac{1}{5} \zeta_{20}^{4} - \frac{1}{5} \zeta_{20}^{2} + \frac{3}{5}
        \end{array}\right)
    \end{align*}

\begin{center}
    \begin{table}[ht] 
    \caption{Ground forms of the icosahedral group}
        \begin{center}
            \begin{tabular}{lll}
                    $S_i$
                    \\
                    \hline
                    \\[-2mm]
                    $\{0, \ldots\}$
                    \\[3mm]
                    $\left\{{\left(5 \, \sqrt{3} \sqrt{38 \, \sqrt{5} + 85} + 25 \, \sqrt{5} + 57\right)}^{\frac{1}{5}},\ldots\right\}$
                    \\[3mm]
                    $\{\zeta_{20},\ldots\}$
                    \\[3mm]
                    $P_i$  
                    \\
                    \hline
                    \\[-2mm]
                    $X^{11} Y - 11 \, X^{6} Y^{6} - X Y^{11}$
                    \\[3mm]
                    $-X^{20} + 228 \, X^{15} Y^{5} - 494 \, X^{10} Y^{10} - 228 \, X^{5} Y^{15} - Y^{20}$ 
                    \\[3mm]
                    $X^{30} + 522 \, X^{25} Y^{5} - 10005 \, X^{20} Y^{10} - 10005 \, X^{10} Y^{20} - 522 \, X^{5} Y^{25} + Y^{30}$
            \end{tabular}
        \end{center}
    \end{table}
\end{center}

The group $\Hom(\by,\mb{C}^\ast)$ only contains the trivial map $\gamma\mapsto 1$, hence the characters $\chi_i$, $\chi$ and $\ch$ are all trivial.

\begin{center}
    \begin{table}[ht!] 
    \caption{Values $\ex(k)$ of the periodic map $\ex:2\mb{Z}\to \prod\{0,\ldots,\nu_i-1\}$ defined in Section \ref{sec:characters and degrees}}
    \label{tab:exponents}
        \begin{center}
                \begin{tabular}{l|cccccc}
                    $k$ & $\cg{n=2m-1}$ & $\cg{n=2m}$ & $\dg{n}$ & $\tg$ & $\og$ & $\yg$
                    \\[2mm]
                    & $(n,n)$ & $(n,n)$ & $(n,2,2)$ & $(3,3,2)$ & $(4,3,2)$ & $(5,3,2)$
                    \\[2mm]
                    \hline
                    30 & $ $ & $ $ & $ $ & $ $ & $ $ & $(0,0,1)$ 
                    \\
                    28 & $ $ & $ $ & $ $ & $ $ & $ $ & $(4,2,0)$ 
                    \\
                    26 & $ $ & $ $ & $ $ & $ $ & $ $ & $(3,1,1)$ 
                    \\
                    \hline
                    24 & $ $ & $ $ & $ $ & $ $ & $ $ & $(2,0,0)$ 
                    \\
                    22 & $ $ & $ $ & $ $ & $ $ & $ $ & $(1,2,1)$ 
                    \\
                    20 & $ $ & $ $ & $ $ & $ $ & $ $ & $(0,1,0)$ 
                    \\
                    \hline
                    18 & $ $ & $ $ & $ $ & $ $ & $ $ & $(4,0,1)$ 
                    \\
                    16 & $ $ & $ $ & $ $ & $ $ & $ $ & $(3,2,0)$ 
                    \\
                    14 & $ $ & $ $ & $ $ & $ $ & $ $ & $(2,1,1)$ 
                    \\
                    \hline
                    12 & $ $ & $ $ & $ $ & $ $ & $(2,0,0)$ & $(1,0,0)$ 
                    \\
                    10 & $ $ & $ $ & $ $ & $ $ & $(1,2,1)$ & $(0,2,1)$ 
                    \\
                    8 & $ $ & $ $ & $ $ & $ $ & $(0,1,0)$ & $(4,1,0)$ 
                    \\
                    \hline
                    \\[-6.5mm]
                    6 & $\vdots$ & $\vdots$ & $\vdots$ & $(0,0,1)$ & $(3,0,1)$ & $(3,0,1)$ 
                    \\
                    4 & $(2,2)$ & $(2,2)$ & $(2,0,0)$ & $(2,2,0)$ & $(2,2,0)$ & $(2,2,0)$ 
                    \\
                    2 & $(1,1)$ & $(1,1)$ & $(1,1,1)$ & $(1,1,1)$ & $(1,1,1)$ & $(1,1,1)$ 
                    \\
                    0 & $(0,0)$ & $(0,0)$ & $(0,0,0)$ & $(0,0,0)$ & $(0,0,0)$ & $(0,0,0)$ 
                    \\
                    -2 & $(n-1,n-1)$ & $(m-1,m-1)$ & $(n-1,1,1)$ & $(2,2,1)$ & $(3,2,1)$ & $(4,2,1)$ 
                    \\
                    -4 & $(n-2,n-2)$ & $(m-2,m-2)$ & $(n-2,0,0)$ & $(1,1,0)$ & $(2,1,0)$ & $(3,1,0)$ 
                    \\[-2mm]
                    -6 & $\vdots$ & $\vdots$ & $\vdots$ & $(0,0,1)$ & $(1,0,1)$ & $(2,0,1)$ 
                    \\
                    \hline
                    -8 & $ $ & $ $ & $ $ & $ $ & $(0,2,0)$ & $(1,2,0)$ 
                    \\
                    -10 & $ $ & $ $ & $ $ & $ $ & $(3,1,1)$ & $(0,1,1)$ 
                    \\
                    -12 & $ $ & $ $ & $ $ & $ $ & $(2,0,0)$ & $(4,0,0)$ 
                    \\
                    \hline
                    -14 & $ $ & $ $ & $ $ & $ $ & $ $ & $(3,2,1)$ 
                    \\
                    -16 & $ $ & $ $ & $ $ & $ $ & $ $ & $(2,1,0)$ 
                    \\
                    -18 & $ $ & $ $ & $ $ & $ $ & $ $ & $(1,0,1)$ 
                    \\
                    \hline
                    -20 & $ $ & $ $ & $ $ & $ $ & $ $ & $(0,2,0)$ 
                    \\
                    -22 & $ $ & $ $ & $ $ & $ $ & $ $ & $(4,1,1)$ 
                    \\
                    -24 & $ $ & $ $ & $ $ & $ $ & $ $ & $(3,0,0)$ 
                    \\
                    \hline
                    -26 & $ $ & $ $ & $ $ & $ $ & $ $ & $(2,2,1)$ 
                    \\
                    -28 & $ $ & $ $ & $ $ & $ $ & $ $ & $(1,1,0)$ 
                    \\
                    -30 & $ $ & $ $ & $ $ & $ $ & $ $ & $(0,0,1)$ 
                    \\
                    \hline
                \end{tabular}
        \end{center}
    \end{table}
\end{center}

\section{Kostant}
\label{sec:kostant}
Kostant expresses the generating functions of isotypical components in $\mb{C}[X,Y]$
in terms of Lie algebras associated to the binary polyhedral groups through the McKay correspondence \cite{kostant1984on}. 
\begin{center}
\begin{table}[ht!] 
\caption{Kostant's table}
\begin{center}
\begin{tabular}{lllllllll}
$\rg$&$X_N$&$a$&$b$&$h$\\
\hline
$\cg{n}$&$A_{2n-1}$&$2$&$2n$&$2n$\\
$\dg{n}$&$D_{n+2}$&$4$&$2n$&$2n+2$\\
$\tg$&$E_{6}$&$6$&$8$&$12$\\
$\og$&$E_{7}$&$8$&$12$&$18$\\
$\yg$&$E_{8}$&$12$&$20$&$30$
\end{tabular}
\end{center}
\end{table}
\end{center}
In particular, the generating function of the algebra of invariants is expressed in terms of the Coxeter number $h$ and the coefficient $d$ in the highest root corresponding to the middle root in the Dynkin diagram of $A_{2n-1}$ or the branch point of the Dynkin diagram of type $D$ or $E$.
\[Q_\triv(t)=\frac{1+t^h}{(1-t^a)(1-t^b)}\]
\[a=2d,\quad b=h+2-a,\quad |\rg|=\frac{ab}{4}\]
The groups with Schur multiplier of order $2$ are precisely the groups where $P_i^{\nu_i}$ is invariant (cf. Lemma \ref{lem:characters}). For these groups, the generating function $Q_\triv(t)$ can be rewritten with denominator $(1-t^{|\rg|})^2$ and numerator
\[N_\triv(t)=\frac{(1+t^h)\left(\sum_{j=0}^{b/2-1}t^{ja}\right)\left(\sum_{j=0}^{a/2-1}t^{jb}\right)}{(1+t^{|\rg|})^2}\]
in terms of the parameters of the Lie algebra (cf.~Table \ref{tab:numerators of generating functions}).
\begin{center}
    \begin{table}[ht!] 
    \caption{Numerators of generating functions}
    \label{tab:numerators of generating functions}
        \begin{center}
            \begin{tabular}{ll}
                $\rg$&$N_\triv(t)$\\\hline\\[-2mm]
                $\dg{n=2m}$&$(1+t^{2n+2})(1+t^4+t^8+\ldots+t^{2n-4})$\\[2mm]
                $\tg$&$t^{22} + t^{16} + t^{14} + t^{8} + t^{6} + 1$\\[2mm]
                $\og$&$t^{46} + t^{38} + t^{34} + t^{30} + t^{28} + t^{26} + t^{20} + t^{18} + t^{16} + t^{12} + t^{8} + 1$\\[2mm]
                $\yg$&$t^{118} + t^{106} + t^{98} + t^{94} + t^{88} + t^{86} + t^{82} + t^{78} + t^{76} + t^{74}$ 
                \\&$+ t^{70} + t^{68} + t^{66} + t^{64} + t^{62} + t^{56} + t^{54} + t^{52} + t^{50} + t^{48}$
                \\&$+t^{44} + t^{42} + t^{40} + t^{36} + t^{32} + t^{30} + t^{24} + t^{20} + t^{12} + 1$
            \end{tabular}
        \end{center}
    \end{table}
\end{center}
We compare this with Section \ref{sec:characters and degrees}. 
Recall that we were forced to specify to the groups which satisfy any of the equivalent facts 
\[d=2,\quad \chi=1, \quad\chi_i^{\nu_i}=1,\quad \Hom(\brg,\mb{C}^\ast)\cong\Hom(\rg,\mb{C}^\ast).\]
The set $E$ of exponents of $N_\triv(t)$ mapped canonically to $\bar{E}\subset\mb{Z}/|\rg|\mb{Z}$  is contained in the image of 
\[N_\ch/(N_\deg\cap N_\ch)\xrightarrow{\odeg} d\mb{Z}/|\rg|\mb{Z}.\]
Indeed, $\bar{E}$ is contained in $\odeg(N_\ch)$ since we only consider the generating function for the trivial character (invariants). We can divide out $N_\deg\cap N_\ch$ since we consider the numerator relative to denominator  $(1-t^{|\rg|})^2$. 
For our specific groups $N_\deg\cap N_\ch$ is trivial, the above map is an isomorphism
and $\odeg(N_\ch)$ is contained in $\bar{E}$ since $\chi=1$.
We get a commutative diagram of bijections
\[\begin{tikzcd}
& E \arrow[d] \\
N_\ch \arrow[r, "\odeg"']\arrow[ru, "\deg"]
& 2\mb{Z}/|\rg|\mb{Z}
\end{tikzcd}
\]
where $\deg(\bar{n}_i)=\sum d_i\res(\bar{n}_i)$. Lemma \ref{lem:generated by 1} states that the group $N_\ch$ is generated by $(\bar{1},\bar{1},\bar{1})$. This gives a construction of $E=\deg(N_\ch)$ using only the group parameters $\nu_i$.
\begin{Example}[Tetrahedral group] If we take group parameters $(\nu_1, \nu_2, \nu_3)=(3,3,2)$ we obtain $|\rg|=12$ and $(d_1, d_2, d_3)=(4,4,6)$. The kernel $N_\ch$ is $$N_\ch=\langle(\bar{1},\bar{1},\bar{1})\rangle=\{(\bar{0},\bar{0},\bar{0}),(\bar{1},\bar{1},\bar{1}),(\bar{2},\bar{2},\bar{0}),(\bar{0},\bar{0},\bar{1}),(\bar{1},\bar{1},\bar{0}),(\bar{2},\bar{2},\bar{1})\}.$$ The degree map sends this to $\{0,14,16,6,8,22\}$. We recognise the exponents of the numerator $N_\triv(t)=t^{22} + t^{16} + t^{14} + t^{8} + t^{6} + 1$ which was obtained from the Lie algebra parameters $(d,h)=(3,12)$.
\end{Example}

\section*{Acknowledgements}
We are grateful to Sara Lombardo for her continuous support and belief in our projects. We thank Casper Oelen for the excellent suggestion to focus on factorisable actions, and Alexander Veselov for asking the questions that started the research for this paper. We are indebted to Jan A.~Sanders for many stimulating discussions and suggestions, to which we owe most of Section \ref{sec:examples}. Finally, we thank Erhard Neher for an enlightening correspondence on centroids and isomorphism classes of automorphic Lie algebras.

\printbibliography

\end{document}